\documentclass[final]{siamart0216}
\usepackage{amsfonts}
\usepackage{todonotes}
\usepackage{amsfonts,amsmath,amssymb}
\usepackage{mathrsfs,mathtools,stmaryrd,wasysym}
\usepackage{enumerate}
\usepackage{xspace,mydef} 
\usepackage{esint}
\usepackage{graphicx,psfrag}
\usepackage{hyperref}
\usepackage{pgf,tikz,pgfplots}
\usepackage{pstricks-add}
\usepackage{enumitem}
\usepackage{yfonts}

\usetikzlibrary{arrows}
\usepackage{rotating}
\DeclareMathAlphabet{\mathpzc}{OT1}{pzc}{m}{it}

\usepackage[notcite,notref]{showkeys} 

\DeclareMathOperator{\supp}{supp}

\newsiamremark{remark}{Remark}
\newsiamremark{assumption}{Assumption}

\newcommand{\norm}[1]{{\left\vert\kern-0.25ex\left\vert\kern-0.25ex\left\vert #1 
\right\vert\kern-0.25ex\right\vert\kern-0.25ex\right\vert}}

\newcommand{\TheTitle}{
Finite element discretizations of a convective Brinkman--Forchheimer model under singular forcing}
\newcommand{\ShortTitle}{A Brinkman--Darcy--Forchheimer model under singular forcing}
\newcommand{\TheAuthors}{A.~Allendes, G.~Campa\~na and E.~Ot\'arola}

\headers{\ShortTitle}{\TheAuthors}

\title{{\TheTitle}\thanks{AA is partially supported by ANID through FONDECYT project 1210729. GC is partially supported by ANID--Subdirecci\'on de Capital Humano/Doctorado Nacional/2020--21200920}. EO is partially supported by ANID through FONDECYT project 1220156.}

\author{Alejandro Allendes\thanks{Departamento de Matem\'atica, Universidad T\'ecnica Federico Santa Mar\'ia, Valpara\'iso, Chile.
(\email{alejandro.allendes@usm.cl}, \url{http://aallendes.mat.utfsm.cl/}).}
\and 
Gilberto Campa\~na\thanks{Departamento de Ciencias, Universidad T\'ecnica Federico Santa Mar\'ia, Valpara\'iso, Chile.
(\email{gilberto.campana@usm.cl}.}
\and
Enrique Ot\'arola\thanks{Departamento de Matem\'atica, Universidad T\'ecnica Federico Santa Mar\'ia, Valpara\'iso, Chile.
(\email{enrique.otarola@usm.cl}, \url{http://eotarola.mat.utfsm.cl/}).}   
}

\ifpdf
\hypersetup{
  pdftitle={\TheTitle},
  pdfauthor={\TheAuthors}
}
\fi

\date{Draft version of \today.}

\begin{document}
\maketitle

\begin{abstract}
In two-dimensional Lipschitz domains, we analyze a Brinkman--Darcy--Forchheimer problem on the weighted spaces $\mathbf{H}_0^1(\omega,\Omega) \times L^2(\omega,\Omega)/\mathbb{R}$, where $\omega$ belongs to the Muckenhoupt class $A_2$. Under a suitable smallness assumption, we prove the existence and uniqueness of a solution. We propose a finite element method and obtain a quasi-best approximation result in the energy norm \emph{\`a la C\'ea} under the assumption that $\Omega$ is convex. We also develop an a posteriori error estimator and study its reliability and efficiency properties. Finally, we develop an adaptive method that yields optimal experimental convergence rates for the numerical examples we perform. 
\end{abstract}

\begin{keywords}
a Brinkman--Darcy--Forchheimer problem, nonlinear equations, Dirac measures, Muckenhoupt weights, finite element methods, a posteriori error estimates, adaptive methods
\end{keywords}

\begin{AMS}
35Q30,          
35Q35,          
35R06,          
65N12,          
65N15, 		    
65N50,          
76S05.  	    
\end{AMS}


\section{Introduction}
\label{sec:intro}
In this article, we are concerned with the study of existence and finite element approximation results for a Brinkman--Darcy--Forchheimer problem under \emph{rough} or \emph{singular forcing}. Specifically, we will study the following \emph{nonlinear} system of partial differential equations (PDEs):
\begin{equation}\label{eq:model}
-\Delta\mathbf{u} 
+ 
(\mathbf{u}\cdot\nabla)\mathbf{u}
+
|\mathbf{u}|\mathbf{u}
+ 
\mathbf{u}+\nabla \mathsf{p}  =  \mathbf{f}  \text{ in }\Omega, 
\quad
\text{div}~\mathbf{u}  =  0  \text{ in }\Omega,
\quad
\mathbf{u} =  \mathbf{0}  \text{ on }\partial\Omega.
\end{equation}
Here, $\Omega$ denotes an open and bounded domain of $\mathbb{R}^2$ with Lipschitz boundary $\partial\Omega$, $\mathbf{u}$ and $\mathsf{p}$ stand for the velocity and pressure of the fluid, respectively, $\mathbf{f}$ is an externally applied force, and $|\cdot|$ denotes the Euclidean norm. Contrary to what is usually found in the literature, our main source of originality and novelty arises from the fact that $\mathbf{f}$ is singular, say a Dirac measure, so that the problem cannot be understood in the classical setting inherited by the space $\mathbf{H}_0^1(\Omega) \times L^2(\Omega)/\mathbb{R}$.

Darcy's law is a linear relationship that describes the creep flow of Newtonian fluids in porous media. This law is supported by years of experimental data and has numerous applications in engineering. It is therefore not surprising that its analysis and approximation have been studied by several authors. Nevertheless, Darcy's law can be inaccurate when modeling fluid flow through porous media with high Reynolds numbers or through media with high porosity. To overcome this inaccuracy, Forchheimer proposed a modification of Darcy's law in \cite{forchheimer1901wasserbewegung} and formulated the so-called Darcy--Forchheimer equations. Several discretization techniques for the Darcy-Forchheimer equations have been studied in the literature; for a non-exhaustive list, we refer the interested reader to \cite{MR2425154,MR3022234,MR2948707,MR4092292,MR4049400,MR4127956}. The incorporation of $-\Delta \mathbf{u}$ and the convective term $(\mathbf{u}\cdot \nabla) \mathbf{u}$ in the Darcy--Forchheimer equations leads to the so-called convective Brinkman--Forchheimer model \eqref{eq:model}. This model was derived by the authors of \cite{VAFAI1981195} as the governing momentum equation based on local volume averaging and matched asymptotic expansion, assuming a two-dimensional stationary, isotropic, incompressible, homogeneous flow through a fluid-saturated porous medium; see also \cite{lastone}. Further justifications for the inclusion of the so-called Brinkman and convective terms in \eqref{eq:model} were presented later in \cite{VAFAI199511}. We refer the interested reader to \cite{guo2005lattice,SHENOY1994101,MR3636305,MR3000454} for further insights, analysis, and applications of this model. To conclude this paragraph, we would like to mention that recently in \cite{MR3967591,MR4658588,MR4633701} discretization methods with finite elements for the system \eqref{eq:model}, but under smooth forcing, have been considered.

While it is true that the study of finite element methods for \eqref{eq:model} and similar models is mature in a standard setting, applications and models have recently appeared where the motion of a fluid is described by \eqref{eq:model} or a modification of it, but because of the singularity of the forces $\mathbf{f}$, the problem must be understood in a completely different setting, and rigorous approximation techniques are scarce. For example, the author of \cite{Lacouture2015187} models the motion of active thin structures using the Stokes equations (a linear model related to \eqref{eq:model}), with a forcing term corresponding to a linear combination of Dirac measures. A second example comes from PDE-constrained optimization theory. In \cite{MR4304887,MR3936891,MR4548586} a problem is formulated where the state is determined by the stationary Stokes/Navier--Stokes equations, but with a measure-valued control. Finally, we refer the reader to \cite{MR3582412}, where the authors study a class of asymptotically Newtonian fluids (Newtonian under large shear rates) under singular forcing. In particular, the authors provide existence and uniqueness results as well as some regularity properties for solutions; see \cite{MR4408483} for some extensions to convex polyhedral domains.

In this article, we continue our research program focused on the development of finite element solution methods for fluid models under rough forcing. The central idea we pursue is to introduce a weight and work in the corresponding weighted Lebesgue and Sobolev spaces so that singular forcing fits into our functional framework. One of the first references is \cite{MR3906341}, where we proved the well-posedness of the Stokes problem over a reduced class of weighted spaces. Later, in \cite{MR4081912} and \cite{MR3892359}, we developed a priori and a posteriori error estimates, respectively, for suitable finite element approximations of the Stokes problem analyzed in \cite{MR3906341}. The research program continued with the analysis and approximation of the Navier--Stokes equations presented in \cite{MR3998864}. A posteriori error estimates for suitable discretizations of such a nonlinear model can be found in \cite{MR4117306}. Regarding the approximation of coupled problems involving fluid flow equations and a suitable temperature equation under singular forcing we refer the interested reader to \cite{ACFO:24,MR4659334,MR4265062}. This brings us to the present article and its main contributions. Before presenting the main contributions of our work, we would like to refer to \cite{MR4658588,COCQUET2021113008,MR3967591,MR4320857}, where different solution methods for the problem \eqref{eq:model} with $\mathbf{f}$ smooth are discussed. As far as we know, this is the first paper that deals with the numerical approximation of \eqref{eq:model} when $\mathbf{f}$ is singular.

Let us comment on the main contributions of our article:

$\bullet$ \emph{Existence and uniqueness of a solution:} We introduce a notion of weak solution in $\mathbf{H}_0^1(\omega,\Omega)\times L^2(\omega,\Omega)/\mathbb{R}$ and use a fixed-point argument to show that the proposed weak problem admits a unique solution under a suitable smallness assumption on $\mathbf{f}$. To accomplish this task, we first establish the well-posedness of a Brinkman problem in the space $\mathbf{H}_0^1(\omega,\Omega)\times L^2(\omega,\Omega)/\mathbb{R}$ using the continuity method and the well-posedness of the Stokes problem from \cite[Theorem 17]{MR3906341}.

$\bullet$ \emph{Finite element discretization:} We propose a finite element scheme for the problem \eqref{eq:model} based on the following two classical inf-sup stable pairs: the mini element and the Taylor--Hood element. We show that the proposed scheme admits a unique solution. Moreover, we obtain a quasi-best approximation result in the energy norm \emph{\`a la C\'ea}. We must immediately note that since $\mathbf{f}$ is very singular, $(\mathbf{u},\mathsf{p})$ is not expected to have any regularity properties beyond those necessary for the problem to be well-posed. Consequently, convergence rates in the energy norm cannot be obtained from the derived quasi-best approximation result.

$\bullet$ \emph{A posteriori error analysis:} Due to the singularity of the force $\mathbf{f}$ in \eqref{eq:model}, no smooth solutions are to be expected. This lack of smoothness motivates the development of a posteriori error estimators and adaptive methods to efficiently solve \eqref{eq:model}. We develop an a posteriori error estimator based on residuals for the proposed finite element discretization method. We show that the developed estimator is globally reliable; see Theorem \ref{thm:globa_reliability}. In Theorem \ref{thm:efficiency}, we investigate efficiency properties for the proposed local indicators. Furthermore, we develop an adaptive finite element method based on the proposed error estimator and present numerical experiments in convex and non-convex domains.

The article is structured as follows. In Section \ref{sec:notation}, we introduce the notation and recall basic facts about weights and weighted Sobolev spaces. In Section \ref{sec:Brinkman}, we analyze a Brinkman problem on weighted spaces. In Section \ref{sec:coupled_problem}, we introduce a weak formulation for the system \eqref{eq:model} and establish a well-posedness result. A finite element method is presented in Section \ref{sec:fem}. Here we also obtain a quasi-best approximation result \emph{\`a la C\'ea}. In Section \ref{sec:a_posteriori_anal}, we propose an error estimator for suitable inf-sup stable finite element pairs and introduce a Ritz projection on weighted spaces. We prove that the energy norm of the error can be bounded by the energy norm of the Ritz projection and obtain the global reliability of the proposed estimator. We also study local efficiency estimates. We conclude the paper with Section \ref{sec:numericalexperiments}, where we present a series of numerical experiments to illustrate our results.


\section{Notation and preliminaries} 
\label{sec:notation}
Let us set the notation and specify the framework in which we will work.

\subsection{Notation}
\label{subsec:notation}

We adopt classical notation for Lebesgue and Sobolev spaces. Let $\mathcal{W}$ and $\mathcal{Z}$ be Banach function spaces. We write $\mathcal{W} \hookrightarrow\mathcal{Z}$ to indicate that $\mathcal{W}$ is continuously embedded in $\mathcal{Z}$. We denote by $\mathcal{W}'$ and $\|\cdot\|_{\mathcal{W}}$ the dual space and norm of $\mathcal{W}$, respectively. For $\mathfrak{p} \in (1, \infty)$, we denote by $\mathfrak{p}'$ its H\"older conjugate, which is such that $1/\mathfrak{p} + 1/\mathfrak{p}' = 1$. The relation $a \lesssim b$ means that there exists a positive constant independent of $a$, $b$, and the discretization parameters such that $a \leq Cb$. The value of $C$ may vary at each occurrence. Finally, we note that the spaces of vector-valued functions and their elements are indicated by boldface.

\subsection{Weighted function spaces}
A weight is a locally integrable function on $\mathbb{R}^2$ defined to be nonnegative. Let $\omega$ be a weight and let $E \subset \mathbb{R}^2$ be a measurable set. We define $\omega(E) = \int_{E} \omega \mathrm{d}\mathbf{x}$. If the measurable set $E \subset \mathbb{R}^2$ has positive Lebesgue measure, we define $\fint_{E} \omega(\mathbf{x}) \mathrm{d}\mathbf{x} = \omega(E)/|E|$.

In what follows, we turn our attention to the weights $\omega$ belonging to the Muckenhoupt class $A_p$: Let $p\in[1,\infty)$. A weight $\omega$ is said to belong to the Muckenhoupt class $A_p$ if \cite{MR2797562,MR1800316,MR293384,MR1774162}
\begin{equation}\label{eq:weight}
\begin{array}{rcl}
[\omega]_{A_1} & := & \displaystyle\sup_B \left(\fint_B\omega \right)\sup_{\mathbf{x}\in B}\dfrac{1}{\omega(\mathbf{x})}<\infty,
\qquad p=1,
\\
[10pt]
[\omega]_{A_p}& := & \displaystyle\sup_B \left(\fint_B\omega \right)\left(\fint_B\omega^{\frac{1}{1-p}}\right)^{p-1}<\infty,
\qquad p\in(1,\infty),
\end{array}
\end{equation}
where the supremum is taken over all balls $B\in\mathbb{R}^2$. The class $A_{\infty}$ is defined by $A_{\infty}:= \cup_{p < \infty} A_p$. For $p\in[1,\infty)$, $[\omega]_{A_p}$ is called the Muckenhoupt characteristic of $\omega$. When $p \in (1,\infty)$, there is some symmetry in $A_p$ with respect to H\"older conjugate exponents: $\omega'=\omega^{1/(1-p)}\in A_{p'}$ if and only if $\omega\in A_p$ \cite[Remark 1.2.4]{MR1774162}. Finally, we note that if $1 \leq p < q < \infty$, then $A_p \subset A_q$ \cite[Remark 1.2.4]{MR1774162}.

A prototypical example of an $A_p$ weight is a power weight: Let $\alpha\in\mathbb{R}$ and let $\mathbf{z}$ be an interior point in $\Omega$. For $p>1$, the weight
\begin{eqnarray}\label{eq:weight_A2}
\mathsf{d}_{\mathbf{z}}^{\alpha}(\mathbf{x}):=|\mathbf{x}-\mathbf{z}|^{\alpha}
\end{eqnarray}
belongs to $A_p$ if and only if $\alpha\in(-2,2(p-1))$ \cite[Chapter IX, Corollary 4.4]{MR869816}. For this particular weight, there is a neighborhood of $\partial\Omega$ in which the weight is strictly positive and continuous. It is, therefore, appropriate to introduce the following restricted class of Muckenhoupt weights \cite[Definition 2.5]{MR1601373}.

\begin{definition}[restricted class $A_p(D)$]
Let $p\in[1,\infty)$ and let $D\subset\mathbb{R}^2$ be a Lipschitz domain. A weight $\varpi\in A_p$ is said to belong to $A_p(D)$ if there is an open set $\mathcal{G}\subset D$ and $\epsilon,\varpi_l>0$ such that: $\{\mathbf{x}\in D:\textnormal{dist}(\mathbf{x},\partial D)<\epsilon\}\subset\mathcal{G}$, $\varpi\in C(\overline{\mathcal{G}})$, and $\varpi_l\leq \varpi(\mathbf{x})$ for all $\mathbf{x}\in \overline{\mathcal{G}}$.
\end{definition}

Let us now introduce Lebesgue and Sobolev weighted spaces. To this end, let $E \subset \mathbb{R}^2$ be an open set, $p \in [1,\infty)$, and $\omega \in A_p$. The space of Lebesgue $p$-integrable functions with respect to the measure $\omega(\mathbf{x})\mathsf{d}\mathbf{x}$ is denoted by $L^p(\omega,E)$. 
$W^{1,p}(\omega,E)$ is defined as the space of functions $v \in L^p(\omega,E)$ with derivatives $D^{\alpha} v \in \mathbf{L}^p(\omega,E)$ for $|\alpha| \leq 1$; derivatives being understood in a week sense. We equip $W^{1,p}(\omega,E)$ with the norm \cite[Section 2.1]{MR1774162}
\begin{equation}
\|  \cdot \|: W^{1,p}(\omega,E) \rightarrow \mathbb{R},
\quad
\|v\|_{W^{1,p}(\omega,E)}:= \left( \|v\|^{p}_{L^p(\omega,E)}+\|\nabla v\|^{p}_{\mathbf{L}^p(\omega,E)} \right)^{\frac{1}{p}}.
\label{eq:norm}
\end{equation}
We also define the space $W_0^{1,p}(\omega,E)$ as the closure of $C_0^{\infty}(E)$ in $W^{1,p}(\omega,E)$. When $p=2$, we set $H^{1}(\omega,E):=W^{1,p}(\omega,E)$ and $H_0^{1}(\omega,E):=W_0^{1,p}(\omega,E)$. The spaces $L^p(\omega,E)$, $W^{1,p}(\omega,E)$, and $W_0^{1,p}(\omega,E)$ are Banach spaces \cite[Proposition 2.1.2]{MR1774162} and smooth functions are dense \cite[Corollary 2.1.6]{MR1774162}; see also \cite[Theorem 1]{MR2491902}. Moreover, given the weighted Poincar\'e inequality of \cite[Theorem 1.3]{MR643158}, we have that $\| \nabla v \|_{\mathbf{L}^p(\omega,E)}$ is an equivalent norm to the norm defined in \eqref{eq:norm} on the space $W^{1,p}_0(\omega,E)$.

\section{A Brinkman problem under singular forcing}
\label{sec:Brinkman}
In this section, we study the well-posedness of the following Brinkman problem: Find $(\mathbf{u},\mathsf{p})$ such that
\begin{equation}\label{eq:brinkman}
-\Delta\mathbf{u} +\mathbf{u}+ \nabla \mathsf{p}  =  \mathbf{f}\text{ in }\Omega, \quad \text{div}~\mathbf{u}  =  g  \text{ in }\Omega, \quad\mathbf{u} =  \mathbf{0}  \text{ on }\partial\Omega,
\end{equation}
where we allow the data $\mathbf{f}$ and $g$ to be singular. The analysis of problem \eqref{eq:brinkman} is a key step to establish the well-posedness of the Brinkman--Darcy--Forchheimer model \eqref{eq:model}.

We begin our studies by proposing a weak formulation for \eqref{eq:brinkman}. Given $\omega\in A_2$, $\mathbf{f}\in\mathbf{H}^{-1}(\omega,\Omega)$, and $g\in L^2(\omega,\Omega)/\mathbb{R}$, find $(\mathbf{u},\mathsf{p})\in \mathbf{H}_0^1(\omega,\Omega)\times L^2(\omega,\Omega)/\mathbb{R}$ such that
\begin{equation}\label{eq:brinkman_problem}
\displaystyle\int_\Omega(\nabla\mathbf{u}:\nabla\mathbf{v}+\mathbf{u}\cdot\mathbf{v}-\mathsf{p}~\textnormal{div }\mathbf{v}+\mathsf{q}\textnormal{ div }\mathbf{u}) = \langle\mathbf{f},\mathbf{v}\rangle+ \int_{\Omega} g \mathsf{q},
\end{equation}
for all $\mathbf{v}\in \mathbf{H}_0^1(\omega^{-1},\Omega)$ and $\mathsf{q}\in L^2(\omega^{-1},\Omega)/\mathbb{R}$. Here, $\langle\cdot,\cdot\rangle$ stands for the duality pairing between $\mathbf{H}^{-1}(\omega,\Omega) := \mathbf{H}_0^1(\omega^{-1},\Omega)'$ and $\mathbf{H}_0^1(\omega^{-1},\Omega)$. Note that because of the boundary conditions for $\mathbf{u}$ in problem \eqref{eq:brinkman}, we must necessarily have $\int_{\Omega}g=0$.

Let us introduce
\begin{equation}
 \label{eq:XandY}
 \mathcal{X}:=\mathbf{H}_0^1(\omega,\Omega)\times L^2(\omega,\Omega)/\mathbb{R},
 \qquad
 \mathcal{Y}:=\mathbf{H}_0^1(\omega^{-1},\Omega)\times L^2(\omega^{-1},\Omega)/\mathbb{R}.
\end{equation}
 
The well-posedness of the Brinkman problem is established in the following result.

\begin{theorem}[well-posedness of the Brinkman problem]
\label{eq:theorem_brinkman}
Let $d \in \{2,3\}$ and let $\Omega \subset \mathbb{R}^{d}$ be a bounded Lipschitz domain. Let $\omega\in A_2(\Omega)$. If $\mathbf{f}\in\mathbf{H}^{-1}(\omega,\Omega)$ and $g\in L^2(\omega,\Omega)/\mathbb{R}$, then there exists a unique solution $(\mathbf{u},\mathsf{p})\in \mathbf{H}_0^1(\omega,\Omega)\times L^2(\omega,\Omega)/\mathbb{R}$ of problem \eqref{eq:brinkman_problem}, which satisfies the following estimate
\begin{equation}\label{eq:brinkman_estimate}
\|\nabla \mathbf{u}\|_{\mathbf{L}^2(\omega,\Omega)} +\|\mathsf{p}\|_{L^2(\omega,\Omega)}
\leq 
C_{\mathcal{B}} \left(
\|\mathbf{f}\|_{\mathbf{H}^{-1}(\omega,\Omega)}
+
\|g\|_{L^{2}(\omega,\Omega)} \right),
\qquad
C_{\mathcal{B}}>0.
\end{equation}
\end{theorem}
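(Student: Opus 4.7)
The plan is to transfer the well-posedness of the weighted Stokes problem \cite[Theorem 17]{MR3906341} to the Brinkman problem via the continuity method applied to the family
\begin{equation*}
a_t((\mathbf{u},\mathsf{p}),(\mathbf{v},\mathsf{q})):=\int_\Omega\bigl(\nabla\mathbf{u}:\nabla\mathbf{v}+t\,\mathbf{u}\cdot\mathbf{v}-\mathsf{p}\,\mathrm{div}\,\mathbf{v}+\mathsf{q}\,\mathrm{div}\,\mathbf{u}\bigr),\qquad t\in[0,1],
\end{equation*}
so that $a_0$ is the Stokes form and $a_1$ is the Brinkman form of \eqref{eq:brinkman_problem}. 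Denoting by $L_t:\mathcal{X}\to\mathcal{Y}'$ the induced operator, I would write $L_t=L_0+tK$, where $\langle K(\mathbf{u},\mathsf{p}),(\mathbf{v},\mathsf{q})\rangle:=\int_\Omega\mathbf{u}\cdot\mathbf{v}$, and reduce the surjectivity of $L_1$ to that of $L_0$.

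First I would verify boundedness of $a_t$ on $\mathcal{X}\times\mathcal{Y}$ uniformly in $t$. The gradient and pressure-divergence terms are handled by Cauchy--Schwarz after the splitting $1=\omega^{1/2}\omega^{-1/2}$. For the mass term, the same splitting yields $|\int_\Omega\mathbf{u}\cdot\mathbf{v}|\leq\|\mathbf{u}\|_{\mathbf{L}^2(\omega,\Omega)}\|\mathbf{v}\|_{\mathbf{L}^2(\omega^{-1},\Omega)}$, which is controlled by $\|(\mathbf{u},\mathsf{p})\|_{\mathcal{X}}\|(\mathbf{v},\mathsf{q})\|_{\mathcal{Y}}$ via the weighted Poincar\'e inequality \cite[Theorem 1.3]{MR643158}, available on both sides since $\omega^{-1}\in A_2$ whenever $\omega\in A_2$. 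The same chain of estimates shows that $K$ factors through the embedding $\mathbf{H}_0^1(\omega,\Omega)\hookrightarrow\mathbf{L}^2(\omega,\Omega)$, which is compact for $A_2$ weights on bounded Lipschitz domains; consequently $L_0^{-1}K:\mathcal{X}\to\mathcal{X}$ is compact and $L_t$ is a compact perturbation of the invertible Stokes operator $L_0$.

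The heart of the argument is the uniform a priori estimate $\|(\mathbf{u},\mathsf{p})\|_{\mathcal{X}}\lesssim\|L_t(\mathbf{u},\mathsf{p})\|_{\mathcal{Y}'}$ required by the continuity method, which by Fredholm's alternative amounts to injectivity of $L_t$ for each $t\in[0,1]$. Here the main obstacle appears, because the asymmetry between trial and test weights rules out the classical energy identity obtained by testing with the solution itself. I would resolve it by exploiting the symmetry $\omega\in A_2\Leftrightarrow\omega^{-1}\in A_2$: the transposed form $a_t^{*}$ fits the same abstract framework with the roles of $\omega$ and $\omega^{-1}$ exchanged, so the identical compact-perturbation argument applied to the adjoint problem, together with the corresponding weighted Stokes well-posedness on $\mathcal{Y}$, delivers injectivity of the adjoint and hence existence for the primal problem; uniqueness for $L_t$ then follows by the same duality. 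The estimate \eqref{eq:brinkman_estimate} is obtained from the uniform bound on $L_1^{-1}$ produced by the continuity method, with $C_{\mathcal{B}}$ depending on $[\omega]_{A_2}$, $\Omega$, and the Stokes and weighted Poincar\'e constants.
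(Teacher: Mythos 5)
Your overall strategy coincides with the paper's: both transfer the weighted Stokes well-posedness of \cite[Theorem 17]{MR3906341} to the Brinkman problem by the method of continuity applied to the family $L_t=L_0+tK$ (the paper's $\mathcal{L}_t=(1-t)\mathcal{S}+t\mathcal{B}$ is the same family, since $\mathcal{B}-\mathcal{S}$ is exactly the mass term), and both rely on the compactness of $\mathbf{H}_0^1(\omega,\Omega)\hookrightarrow\mathbf{L}^2(\omega,\Omega)$. Your boundedness and compactness verifications are fine. The problem is the step you yourself call the heart of the argument. Reducing the uniform a priori estimate to injectivity of $L_t$ via the Fredholm alternative is legitimate, but your proof of injectivity is circular: since $L_t=L_0(\mathrm{I}+tL_0^{-1}K)$ is Fredholm of index zero, injectivity of $L_t$, surjectivity of $L_t$, and injectivity of the adjoint $L_t^{*}$ are all equivalent, so passing to the transposed form with $\omega$ and $\omega^{-1}$ exchanged gains nothing. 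The "identical compact-perturbation argument" applied to $a_t^{*}$ only reproduces the Fredholm property; it does not deliver injectivity of the adjoint any more than it delivered injectivity of the primal operator. What is actually needed is the statement that the homogeneous weighted Brinkman system ($\mathbf{f}=0$, $g=0$) has only the trivial solution in $\mathcal{X}$, and this is precisely what cannot be obtained by testing with the solution itself; it requires new input, e.g. a regularity/transposition argument showing that a homogeneous solution in $\mathbf{H}_0^1(\omega,\Omega)$ in fact lies in the unweighted space, after which the classical energy identity applies.

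For comparison, the paper takes a slightly different route to the a priori bound: it views $(\mathbf{u}_t,\mathsf{p}_t)$ as the solution of a Stokes problem with the additional datum $-t\mathbf{u}_t$, deduces the G\r{a}rding-type inequality \eqref{eq:Garding} from the Stokes estimate, and then obtains \eqref{eq:estimate_tproblem} by a compactness--contradiction argument, avoiding explicit Fredholm theory. But that argument hinges on the very same uniqueness statement (the weak limit of the contradicting sequence satisfies $\mathcal{L}_t(\mathbf{u},\mathsf{p})=0$ and must be shown to vanish). So the ingredient missing from your write-up is exactly the point on which any version of this proof must lean; you should supply it directly rather than try to route around it by duality.
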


\begin{proof}
Inspired by the proof of \cite[Theorem 6.8]{MR1814364}, we proceed on the basis of the \emph{method of continuity} presented in \cite[Theorem 5.2]{MR1814364}. We split the proof in four steps.

\emph{Step 1.} \emph{A bounded linear map $\mathcal{S}$ associated to a Stokes problem.} We define
\begin{eqnarray}\label{eq:L0_map}
\mathcal{S}:  \mathcal{X}\to \mathcal{Y}',
\quad
\langle\mathcal{S}(\mathbf{u},\mathsf{p}),(\mathbf{v},\mathsf{q})\rangle:=\displaystyle\int_\Omega
(\nabla\mathbf{u}:\nabla\mathbf{v}-\mathsf{p}~\textnormal{div }\mathbf{v}+\mathsf{q}~\textnormal{div }\mathbf{u}).
\end{eqnarray}
We notice that $\mathcal{S}$ is a bounded linear operator. In fact, we have the bound
\begin{align*}
\|\mathcal{S}(\mathbf{u},\mathsf{p}) \|_{\mathcal{Y}'}
=
\displaystyle
\sup_{(\mathbf{0},\mathsf{0})\neq(\mathbf{v},\mathsf{q})\in \mathcal{Y}}\dfrac{\langle\mathcal{S}(\mathbf{u},\mathsf{p}),(\mathbf{v},\mathsf{q})\rangle}{\|(\mathbf{v},\mathsf{q})\|_{\mathcal{Y}}}
\lesssim
\|\nabla \mathbf{u}\|_{\mathbf{L}^2(\omega,\Omega)} + \|\mathsf{p}\|_{L^2(\omega,\Omega)},
\end{align*}
where $\|(\mathbf{v},\mathsf{q})\|_{\mathcal{Y}}:=\|\nabla \mathbf{v}\|_{\mathbf{L}^2(\omega^{-1},\Omega)}+\|\mathsf{q}\|_{L^2(\omega^{-1},\Omega)}$. We now introduce the following weak formulation associated with the Stokes operator $\mathcal{S}$. Given ${\mathbf{g}} \in \mathbf{H}^{-1}(\omega,\Omega)$ and $h \in L^{2}(\omega,\Omega)/\mathbb{R}$, find $(\boldsymbol{\varphi},\psi)\in\mathcal{X}$ such that $\langle\mathcal{S}(\boldsymbol{\varphi},\psi),(\mathbf{v},\mathsf{q})\rangle=\langle\mathbf{g},\mathbf{v}\rangle+(h,\mathsf{q})_{L^2(\Omega)}$ for all $(\mathbf{v},\mathsf{q})\in\mathcal{Y}$. The well-posedness of this Stokes system follows from \cite[Theorem 17]{MR3906341}.

\emph{Step 2.} \emph{A bounded linear map $\mathcal{B}$ associated to a Brinkman problem.} We define 
\begin{eqnarray}\label{eq:L1_map}
\mathcal{B}:  \mathcal{X}\!\to\! \mathcal{Y}',
\quad
\langle\mathcal{B}(\mathbf{u},\mathsf{p}),(\mathbf{v},\mathsf{q})\rangle:=\displaystyle\int_\Omega(\nabla\mathbf{u}:\nabla\mathbf{v}\!+\!\mathbf{u}\cdot\mathbf{v}\!-\!\mathsf{p}~\textnormal{div }\mathbf{v}\!+\!\mathsf{q}~\textnormal{div }\mathbf{u}).
\end{eqnarray}
The map $\mathcal{B}$ is linear and bounded. In particular, we have $\|\mathcal{B}(\mathbf{u},\mathsf{p}) \|_{\mathcal{Y}'}\lesssim \|\nabla \mathbf{u}\|_{\mathbf{L}^2(\omega,\Omega)}+\|\mathsf{p}\|_{L^2(\omega,\Omega)}$. With $\mathcal{B}$ at hand, problem \eqref{eq:brinkman_problem} can be equivalently written as follows: Find $(\mathbf{u},\mathsf{p}) \in \mathcal{X}$ such that $\langle \mathcal{B}(\mathbf{u},\mathsf{p}),(\mathbf{v},\mathsf{q}) \rangle = \langle\mathbf{f},\mathbf{v}\rangle+(g,\mathsf{q})_{L^2(\Omega)}$ for all $(\mathbf{v},\mathsf{q})\in\mathcal{Y}$.

\emph{Step 3.} \emph{The a priori estimate \eqref{eq:brinkman_estimate}}. Let us introduce, for $t\in[0,1]$, the operator
\begin{equation}\label{eq:Lt_map}
\mathcal{L}_t: \mathcal{X}\to \mathcal{Y}',
\qquad
\mathcal{L}_t :=(1-t)\mathcal{S} + t\mathcal{B}.
\end{equation}
Observe that $\mathcal{L}_0 = \mathcal{S}$, $\mathcal{L}_1 = \mathcal{B}$, and that $\mathcal{L}_t$ is a linear and bounded operator from $\mathcal{X}$ into $\mathcal{Y}'$. Let us consider the following family of equations: Find $(\mathbf{u},\mathsf{p})\in\mathcal{X}$ such that $\langle\mathcal{L}_t(\mathbf{u},\mathsf{p}),(\mathbf{v},\mathsf{q})\rangle=\langle\mathbf{f},\mathbf{v}\rangle+(g,\mathsf{q})_{L^2(\Omega)}$ for all $(\mathbf{v},\mathsf{q})\in\mathcal{Y}$, where $t \in [0,1]$. For $t \in [0,1]$, the solvability of this problem is then equivalent to the invertibility of the map $\mathcal{L}_t$. Let $(\mathbf{u}_t,\mathsf{p}_t) \in \mathcal{X}$ be a solution to such a problem. In what follows, we prove
\begin{align}\label{eq:estimate_tproblem}
\|\nabla \mathbf{u}_t\|_{\mathbf{L}^2(\omega,\Omega)}+\|\mathsf{p}_t\|_{L^2(\omega,\Omega)}\lesssim \|\mathbf{f}\|_{\mathbf{H}^{-1}(\omega,\Omega)}+\|g\|_{L^{2}(\omega,\Omega)},
\end{align}
which is equivalent to $\| (\mathbf{u}_t,\mathsf{p}_t) \|_{\mathcal{X}} \lesssim \| \mathcal{L}_t(\mathbf{u},\mathsf{p}) \|_{\mathcal{Y}'}$. An important observation is that $(\mathbf{u}_t,\mathsf{p}_t)$ can be seen as a solution to the following Stokes problem:
Find $(\mathbf{u}_t,\mathsf{p}_t) \in \mathcal{X}$ such that $\langle\mathcal{S}(\mathbf{u}_t,\mathsf{p}_t),(\mathbf{v},\mathsf{q})\rangle =\langle\mathbf{f},\mathbf{v}\rangle+(g,\mathsf{q})_{L^2(\Omega)} - t (\mathbf{u}_t,\mathbf{v})_{\mathbf{L}^2(\Omega)}$ for all $(\mathbf{v},\mathsf{q})\in\mathcal{Y}$. We can thus apply the estimate in \cite[Theorem 17]{MR3906341} to arrive at
\begin{align}
\|\nabla \mathbf{u}_t\|_{\mathbf{L}^2(\omega,\Omega)}+\|\mathsf{p}_t\|_{L^2(\omega,\Omega)}\lesssim \|\mathbf{f}\|_{\mathbf{H}^{-1}(\omega,\Omega)}+\|g\|_{L^{2}(\omega,\Omega)}+\|\mathbf{u}_t\|_{\mathbf{L}^2(\omega,\Omega)}.
\label{eq:Garding}
\end{align}

To obtain \eqref{eq:estimate_tproblem}, we proceed by a contradiction argument. Assuming that \eqref{eq:estimate_tproblem} is false, it is possible to find sequences $\{(\mathbf{u}_k,\mathsf{p}_k)\}_{k\in\mathbb{N}}\subset \mathbf{H}_0^1(\omega,\Omega)\times L^2(\omega,\Omega)/\mathbb{R}$ and $\{(\mathbf{f}_k,g_k)\}_{k\in\mathbb{N}}\subset \mathbf{H}^{-1}(\omega,\Omega)\times L^2(\omega,\Omega)/\mathbb{R}$ such that $(\mathbf{u}_k,\mathsf{p}_k,\mathbf{f}_k,g_k)$ satisfies, for $k \in \mathbb{N}$, $\langle\mathcal{L}_t(\mathbf{u}_k,\mathsf{p}_k),(\mathbf{v},\mathsf{q})\rangle=\langle\mathbf{f}_k,\mathbf{v}\rangle+(g_k,\mathsf{q})_{L^2(\Omega)}$ for all $(\mathbf{v},\mathsf{q})\in\mathcal{Y}$ and $\|\nabla\mathbf{u}_k\|_{\mathbf{L}^2(\omega,\Omega)}+\|\mathsf{p}_k\|_{L^2(\omega,\Omega)}=1$, but $\|\mathbf{f}_k\|_{\mathbf{H}^{-1}(\omega,\Omega)}+\|g_k\|_{L^{2}(\omega,\Omega)} \rightarrow 0$ as $k\uparrow \infty$. Since $\{(\mathbf{u}_k,\mathsf{p}_k)\}_{k\in\mathbb{N}}$ is uniformly bounded in $\mathbf{H}_0^1(\omega,\Omega)\times L^2(\omega,\Omega)/\mathbb{R}$, we deduce the existence of a nonrelabelared subsequence $\{(\mathbf{u}_k,\mathsf{p}_k)\}_{k\in\mathbb{N}}$ such that $\mathbf{u}_k \rightharpoonup \mathbf{u}$ in $\mathbf{H}_0^1(\omega,\Omega)$ and $\mathsf{p}_k \rightharpoonup \mathsf{p}$ in $L^2(\omega,\Omega)/\mathbb{R}$ as $k\uparrow \infty$. The limit $(\mathbf{u},\mathsf{p})$ satisfies $\mathcal{L}_t(\mathbf{u},\mathsf{p})=\mathbf{0}$ in $\mathcal{Y}'$. Consequently, $(\mathbf{u},\mathsf{p}) = (\mathbf{0},0)$. On the other hand, the compact embedding $\mathbf{H}_0^1(\omega,\Omega) \hookrightarrow \mathbf{L}^2(\omega,\Omega)$ \cite[Theorem 4.12]{MR2797702}, \cite[Proposition 2]{MR3998864} shows that $\mathbf{u}_k \to \mathbf{0}$ in $\mathbf{L}^2(\omega,\Omega)$. We can thus invoke the G\r{a}rding-like inequality \eqref{eq:Garding} to deduce that
\begin{eqnarray*}
1 \!=\! \|\nabla\mathbf{u}_k\|_{\mathbf{L}^2(\omega,\Omega)}
\!+\!
\|\mathsf{p}_k\|_{L^2(\omega,\Omega)}
\lesssim \|\mathbf{f}_k\|_{\mathbf{H}^{-1}(\omega,\Omega)}\!+\!\|g_k\|_{L^{2}(\omega,\Omega)}\!+\!\|\mathbf{u}_k\|_{\mathbf{L}^2(\omega,\Omega)}\!\to \!0, \,\, k \uparrow\infty,
\end{eqnarray*}
which is a contradiction. We have thus obtained the desired estimate \eqref{eq:estimate_tproblem}.

\emph{Step 4.} \emph{The method of continuity and the well-posedness of \eqref{eq:brinkman_problem}.} With the estimate \eqref{eq:estimate_tproblem} at hand, we invoke \cite[Theorem 5.2]{MR1814364} and the fact that $\mathcal{L}_0 = \mathcal{S}$ maps $\mathcal{X}$ onto $\mathcal{Y}'$ to deduce that $\mathcal{L}_1 = \mathcal{B}$ maps $\mathcal{X}$ onto $\mathcal{Y}'$ as well, i.e., problem \eqref{eq:brinkman_problem} admits a solution. Since problem \eqref{eq:brinkman_problem} is linear, estimate \eqref{eq:estimate_tproblem} guarantees the uniqueness of solutions. We have thus proved that problem \eqref{eq:brinkman_problem} is well-posed. 
\end{proof}


\section{A Brinkman--Darcy--Forchheimer model}
\label{sec:coupled_problem}
In this section, we show the existence of solutions to the system \eqref{eq:model}. Before doing so, recall that the convective term $(\mathbf{v}\cdot \nabla)\mathbf{v}$ can be rewritten as $\text{div}(\mathbf{v}\otimes\mathbf{v})$ if $\mathbf{v}$ is sufficiently smooth and solenoidal. This property is used to propose a weak formulation for the system \eqref{eq:model}.


\subsection{Weak formulation}
\label{sec:weak_solutions}
For a given weight $\omega$ in the class $A_2$, we define the bilinear forms $a_{0}:\mathbf{H}_0^1(\omega,\Omega)\times \mathbf{H}_0^1(\omega^{-1},\Omega)\to\mathbb{R}$, $a_{1}:
\mathbf{L}^2(\omega,\Omega)\times \mathbf{L}^2(\omega^{-1},\Omega)\to\mathbb{R}$, and $b_{\pm}:\mathbf{H}_0^1(\omega^{\pm 1},\Omega)\times L^2(\omega^{\mp1},\Omega)/\mathbb{R}\to\mathbb{R}$ by
\begin{multline*}\label{eq:form_bilinear}
a_0(\mathbf{w},\mathbf{v}):=\int_\Omega\nabla \mathbf{w}:\nabla\mathbf{v},
\qquad 
a_1(\mathbf{w},\mathbf{v}):=\int_{\Omega}\mathbf{w}\cdot\mathbf{v}, 
\qquad b_{\pm}(\mathbf{v},\mathsf{q}):=-\int_\Omega\mathsf{q}~\text{div }\mathbf{v},
\end{multline*}
respectively. With $a_0$ and $a_1$ at hand, we define 
$
a :\mathbf{H}_0^1(\omega,\Omega)\times \mathbf{H}_0^1(\omega^{-1},\Omega)\to\mathbb{R}
$
by $a(\mathbf{w},\mathbf{v}) := a_0(\mathbf{w},\mathbf{v})+a_1(\mathbf{w},\mathbf{v})$. We now introduce forms associated to the nonlinear terms $(\mathbf{u}\cdot\nabla)\mathbf{u}$ and $|\mathbf{u}|\mathbf{u}$ in \eqref{eq:model}. We define $c:[\mathbf{H}_0^1(\omega,\Omega)]^2 \times \mathbf{H}_0^1(\omega^{-1},\Omega)\to\mathbb{R}$ and $d: [\mathbf{H}_0^1(\omega,\Omega)]^2 \times \mathbf{H}_0^1(\omega^{-1},\Omega)\to\mathbb{R}$, respectively, by
\begin{align*}\label{eq:form_nonlinear}
c(\mathbf{u},\mathbf{w};\mathbf{v}):=-\int_\Omega \mathbf{u}\otimes\mathbf{w}:\nabla\mathbf{v},\qquad d(\mathbf{u},\mathbf{w};\mathbf{v}):=\int_\Omega |\mathbf{u}|\mathbf{w}\cdot\mathbf{v}.
\end{align*}

With these ingredients, let us consider the following weak formulation for the system \eqref{eq:model}: Find $(\mathbf{u},\mathsf{p})\in \mathcal{X}$ such that
\begin{eqnarray}\label{eq:modelweak}
a(\mathbf{u},\mathbf{v}) 
+
b_{-}(\mathbf{v},\mathsf{p})
+
c(\mathbf{u},\mathbf{u};\mathbf{v})
+
d(\mathbf{u},\mathbf{u};\mathbf{v})
=
\langle \mathbf{f},\mathbf{v}\rangle,
\quad
b_{+}(\mathbf{u},\mathsf{q})=0,
\end{eqnarray}
for all $(\mathbf{v},q)\in\mathcal{Y}$. Here, $\langle\cdot,\cdot\rangle$ denotes the duality pairing between $\mathbf{H}^{-1}(\omega,\Omega)$ and $\mathbf{H}_0^1(\omega^{-1},\Omega)$. We recall that the spaces $\mathcal{X}$ and $\mathcal{Y}$ are defined in \eqref{eq:XandY}.

In the following, we will use the following inf-sup condition on weighted spaces, which follows directly from the existence of a right inverse of the divergence; see \cite[Theorem 2.8]{MR3618122}, \cite[Theorem 3.1]{MR2731700}, \cite[Lemma 6.1]{MR4081912}, and \cite[Theorem 1]{MR2548872}:
\begin{eqnarray}\label{eq:infsup}
\|\mathsf{p}\|_{L^2(\omega,\Omega)}\lesssim \sup_{\mathbf{0}\neq \mathbf{v}\in \mathbf{H}_0^1(\omega^{-1},\Omega)}\dfrac{b_{-}(\mathbf{v},\mathsf{p})}{\|\nabla\mathbf{v}\|_{\mathbf{L}^2(\omega^{-1},\Omega)}}\quad \forall \mathsf{p}\in L^2(\omega,\Omega)/\mathbb{R}.
\end{eqnarray}
We will also make use of the following weighted inf-sup condition for $a_0$ \cite{MR3906341}:
\begin{multline}
\label{eq:infsup_a0}
\inf_{\mathbf{0}\neq \mathbf{v}\in \mathbf{H}_0^1(\omega,\Omega)} \sup_{\mathbf{0}\neq \mathbf{w} \in \mathbf{H}_0^1(\omega^{-1},\Omega)}\dfrac{a_0(\mathbf{v},\mathbf{w})}{\|\nabla\mathbf{v}\|_{\mathbf{L}^2(\omega,\Omega)}\|\nabla\mathbf{w}\|_{\mathbf{L}^2(\omega^{-1},\Omega)}}
\\
=
\inf_{\mathbf{0}\neq \mathbf{w}\in \mathbf{H}_0^1(\omega^{-1},\Omega)} \sup_{\mathbf{0}\neq \mathbf{v}\in \mathbf{H}_0^1(\omega,\Omega)}\dfrac{a_0(\mathbf{v},\mathbf{w})}{\|\nabla\mathbf{v}\|_{\mathbf{L}^2(\omega,\Omega)}\|\nabla\mathbf{w}\|_{\mathbf{L}^2(\omega^{-1},\Omega)}}
>0,
\end{multline}
which holds under the further restriction that $\omega \in A_2(\Omega)$.

The following result guarantees the boundedness of the convective and Forchheimer terms on weighted spaces.

\begin{lemma}[boundedness of the convective and Forchheimer terms]\label{eq:lemma01}
If $\omega \in A_2$, $\mathbf{u},\mathbf{w} \in \mathbf{H}_0^1(\omega,\Omega)$, and $\mathbf{v}\in \mathbf{H}_0^1(\omega^{-1},\Omega)$, then
\begin{equation}\label{eq:estimate_NL}
\begin{aligned}
\left|c(\mathbf{u},\mathbf{w};\mathbf{v})\right|
& \leq  
C_{4\to 2}^2 \|\nabla \mathbf{u}\|_{\mathbf{L}^2(\omega,\Omega)} \|\nabla \mathbf{w}\|_{\mathbf{L}^2(\omega,\Omega)}\|\nabla \mathbf{v}\|_{\mathbf{L}^2(\omega^{-1},\Omega)},
\\
\left|d(\mathbf{u},\mathbf{w};\mathbf{v})\right|
& \leq 
C_{4\to 2}^2 C_{2\to 2} \|\nabla \mathbf{u}\|_{\mathbf{L}^2(\omega,\Omega)} \|\nabla \mathbf{w}\|_{\mathbf{L}^2(\omega,\Omega)}\|\nabla \mathbf{v}\|_{\mathbf{L}^2(\omega^{-1},\Omega)}.
\end{aligned}
\end{equation}
Here, $C_{4\to 2}$ and $C_{2\to 2}$ denote the best constants in the embeddings $\mathbf{H}_0^{1}(\omega,\Omega)\hookrightarrow \mathbf{L}^{4}(\omega,\Omega)$ and $\mathbf{H}_0^{1}(\omega^{-1},\Omega)\hookrightarrow \mathbf{L}^{2}(\omega^{-1},\Omega)$, respectively.
\end{lemma}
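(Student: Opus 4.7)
The plan is to bound both trilinear forms by a standard three-factor H\"older argument, choosing the weight exponents so that the three factors fit into weighted $L^4$, $L^4$, and $L^2$ spaces respectively, and then invoke the appropriate weighted Sobolev embeddings.

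For the convective term $c(\mathbf{u},\mathbf{w};\mathbf{v}) = -\int_\Omega \mathbf{u}\otimes\mathbf{w}:\nabla \mathbf{v}$, I would first apply Cauchy--Schwarz pointwise to reduce to the scalar integral $\int_\Omega |\mathbf{u}|\,|\mathbf{w}|\,|\nabla \mathbf{v}|$, and then rewrite the integrand by distributing the weight as
\begin{equation*}
|\mathbf{u}|\,|\mathbf{w}|\,|\nabla \mathbf{v}| = \bigl(|\mathbf{u}|\omega^{1/4}\bigr)\bigl(|\mathbf{w}|\omega^{1/4}\bigr)\bigl(|\nabla \mathbf{v}|\omega^{-1/2}\bigr).
\end{equation*}
H\"older's inequality with exponents $(4,4,2)$ then yields
\begin{equation*}
|c(\mathbf{u},\mathbf{w};\mathbf{v})| \leq \|\mathbf{u}\|_{\mathbf{L}^4(\omega,\Omega)}\,\|\mathbf{w}\|_{\mathbf{L}^4(\omega,\Omega)}\,\|\nabla \mathbf{v}\|_{\mathbf{L}^2(\omega^{-1},\Omega)},
\end{equation*}
and the embedding $\mathbf{H}_0^1(\omega,\Omega) \hookrightarrow \mathbf{L}^4(\omega,\Omega)$, with best constant $C_{4\to 2}$, applied to $\mathbf{u}$ and $\mathbf{w}$ gives the first estimate in \eqref{eq:estimate_NL}.

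For the Forchheimer term $d(\mathbf{u},\mathbf{w};\mathbf{v}) = \int_\Omega |\mathbf{u}|\mathbf{w}\cdot \mathbf{v}$, the same weight distribution
\begin{equation*}
|\mathbf{u}|\,|\mathbf{w}|\,|\mathbf{v}| = \bigl(|\mathbf{u}|\omega^{1/4}\bigr)\bigl(|\mathbf{w}|\omega^{1/4}\bigr)\bigl(|\mathbf{v}|\omega^{-1/2}\bigr)
\end{equation*}
and the same H\"older exponents yield
\begin{equation*}
|d(\mathbf{u},\mathbf{w};\mathbf{v})| \leq \|\mathbf{u}\|_{\mathbf{L}^4(\omega,\Omega)}\,\|\mathbf{w}\|_{\mathbf{L}^4(\omega,\Omega)}\,\|\mathbf{v}\|_{\mathbf{L}^2(\omega^{-1},\Omega)}.
\end{equation*}
Here I invoke the embedding $\mathbf{H}_0^1(\omega,\Omega)\hookrightarrow \mathbf{L}^4(\omega,\Omega)$ twice as before, and additionally the embedding $\mathbf{H}_0^1(\omega^{-1},\Omega)\hookrightarrow \mathbf{L}^2(\omega^{-1},\Omega)$ (valid because $\omega \in A_2$ implies $\omega^{-1}\in A_2$), producing the factor $C_{2\to 2}$ in the second bound of \eqref{eq:estimate_NL}.

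There is no real obstacle here; the argument is entirely a H\"older/embedding calculation. The only point worth highlighting is the bookkeeping of the weight, i.e.\ splitting $\omega^{1/2}\cdot \omega^{1/2} \cdot 1 = \omega^{1/4}\cdot \omega^{1/4}\cdot \omega^{-1/2}\cdot \omega$ so that two factors carry weight $\omega$ and the remaining one carries $\omega^{-1}$, which is exactly what is needed to land in the spaces appearing on the right-hand sides. The availability of the weighted Sobolev embeddings in these specific weighted $L^p$ spaces has already been used in earlier papers of the program (e.g.\ \cite{MR3998864}) and is a direct consequence of $\omega,\omega^{-1}\in A_2$.
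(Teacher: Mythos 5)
Your proposal is correct and follows essentially the same route as the paper's proof: a three-factor H\"older inequality with exponents $(4,4,2)$ and the weight split $\omega^{1/4}\cdot\omega^{1/4}\cdot\omega^{-1/2}$, followed by the weighted embeddings $\mathbf{H}_0^1(\omega,\Omega)\hookrightarrow\mathbf{L}^4(\omega,\Omega)$ and $\mathbf{H}_0^1(\omega^{-1},\Omega)\hookrightarrow\mathbf{L}^2(\omega^{-1},\Omega)$, which the paper justifies via the two-dimensional weighted Sobolev embedding of \cite[Theorem 1.3]{MR643158}. You merely make the H\"older bookkeeping explicit where the paper leaves it implicit.
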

\begin{proof}
Since we are in two dimensions and $\omega$ and $\omega^{-1}$ belong to $A_2$, \cite[Theorem 1.3]{MR643158} shows that there exists $\zeta>0$ such that $\mathbf{H}_0^{1}(\omega^{\pm 1},\Omega)\hookrightarrow \mathbf{L}^{2k}(\omega^{\pm 1},\Omega)$ for every $k \in [1,2+\zeta]$. Consequently,
\begin{eqnarray*}
|c(\mathbf{u},\mathbf{w};\mathbf{v})|
& \leq & \| \mathbf{u}\|_{\mathbf{L}^4(\omega,\Omega)} \| \mathbf{w}\|_{\mathbf{L}^4(\omega,\Omega)} \|\nabla \mathbf{v}\|_{\mathbf{L}^2(\omega^{-1},\Omega)}
\\
&\leq & C_{4\to 2}^2 
\|\nabla \mathbf{u}\|_{\mathbf{L}^2(\omega,\Omega)} \|\nabla \mathbf{w}\|_{\mathbf{L}^2(\omega,\Omega)}\|\nabla \mathbf{v}\|_{\mathbf{L}^2(\omega^{-1},\Omega)}.
\end{eqnarray*}
Similarly, $|d(\mathbf{u},\mathbf{w};\mathbf{v})| \leq C_{4\to 2}^2 C_{2\to 2} \|\nabla \mathbf{u}\|_{\mathbf{L}^2(\omega,\Omega)} \|\nabla \mathbf{w}\|_{\mathbf{L}^2(\omega,\Omega)}\|\nabla \mathbf{v}\|_{\mathbf{L}^2(\omega^{-1},\Omega)}$.
\end{proof}


\subsection{Well-posedness for small data}
\label{sec:existence_solutions}
We begin this section with a redefinition of the mapping $\mathcal{B}:\mathcal{X} \to \mathcal{Y}'$ and the definition of $\mathcal{N}_{\mathcal{L}}:\mathcal{X}\to \mathcal{Y}'$ and $\mathcal{F}\in\mathcal{Y}'$ as
\begin{eqnarray*}
\langle \mathcal{B}(\mathbf{u},\mathsf{p}),(\mathbf{v},\mathsf{q})\rangle &:=&a(\mathbf{u},\mathbf{v})+b_{-}(\mathbf{v},\mathsf{p})+b_{+}(\mathbf{u},\mathsf{q}),
\\
\langle \mathcal{N}_{\mathcal{L}}(\mathbf{u},\mathsf{p}),(\mathbf{v},\mathsf{q})\rangle &:=&c(\mathbf{u},\mathbf{u};\mathbf{v})+d(\mathbf{u},\mathbf{u};\mathbf{v}),
\end{eqnarray*}
and $\langle\mathcal{F},(\mathbf{v},\mathsf{q})\rangle :=\langle\mathbf{f},\mathbf{v}\rangle$, respectively. Here, $(\mathbf{v},q)\in\mathcal{Y}$. We recall that the spaces $\mathcal{X}$ and $\mathcal{Y}$ are defined in \eqref{eq:XandY}. With this functional framework, we can reformulate the problem \eqref{eq:modelweak} as an equivalent equation in $\mathcal{Y}'$:
$
\mathcal{B}(\mathbf{u},\mathsf{p})+ \mathcal{N}_{\mathcal{L}}(\mathbf{u},\mathsf{p})=\mathcal{F}.
$

The map $\mathcal{B}$ is linear and bounded; see the proof of Theorem \ref{eq:theorem_brinkman} for details. If $\Omega$ is Lipschitz and $\omega$ belongs to $A_2(\Omega)$, then Theorem \ref{eq:theorem_brinkman} guarantees that $\mathcal{B}$ has a bounded inverse. We thus introduce
\begin{eqnarray}\label{eq:T_fixedpoint}
\mathcal{T}:\mathcal{X}\to\mathcal{X},
\qquad
(\mathbf{u},\mathsf{p})=\mathcal{T}(\mathbf{w},\mathsf{r})=\mathcal{B}^{-1}[\mathcal{F}-\mathcal{N}_{\mathcal{L}}(\mathbf{w},\mathsf{r})].
\end{eqnarray}
To prove the existence of a solution to the system \eqref{eq:modelweak}, we use a fixed point argument applied to the map $\mathcal{T}$ and prove that the existence and uniqueness of solutions is guaranteed if the datum $\mathbf{f}$ is sufficiently small. We begin the analysis with a standard contraction argument; see, for instance, \cite[Theorem 3.1]{MR2413675}, \cite[Theorem 5.6]{MR2272870}, and \cite[Proposition 1]{MR3998864}. To present such a result, we define
$\mathsf{A}:=(3 C_e\|\mathcal{B}^{-1}\|)^{-1}>0$ and $\mathfrak{B}_\mathsf{A}:=\{\mathbf{w}\in \mathbf{H}_0^1(\omega,\Omega):\text{div }\mathbf{w}=0,~\|\nabla\mathbf{w}\|_{\mathbf{L}^2(\omega,\Omega)}\leq \mathsf{A}\},$ 
where $C_e:=C_{4\to 2}^2 (1+C_{2\to 2})$ and $\|\mathcal{B}^{-1}\|$ denotes the $\mathcal{Y}' \rightarrow \mathcal{X}$ norm of $\mathcal{B}^{-1}$. Let us introduce, in addition, the map $\mathcal{T}_1:\mathbf{H}_0^1(\omega,\Omega)\to\mathbf{H}_0^1(\omega,\Omega)$ defined as $\mathbf{w} \mapsto P_r \mathcal{T}(\mathbf{w},0)$, where $P_r: \mathcal{X}\to \mathbf{H}_0^1(\omega,\Omega)$ corresponds to the projection onto the velocity component.

\begin{proposition}[$\mathcal{T}_1:\mathfrak{B}_\mathsf{A}\to \mathfrak{B}_\mathsf{A}$ is a contraction]
\label{eq:prop01}
Let $\Omega$ be a bounded Lipschitz domain and $\omega\in A_2(\Omega)$. If the forcing term $\mathbf{f}$ is sufficiently small so that
\begin{equation}\label{eq:assum01}
C_e \|\mathcal{B}^{-1}\|^2 \|\mathbf{f}\|_{\mathbf{H}^{-1}(\omega,\Omega)}< \tfrac{1}{6},
\end{equation}
then $\mathcal{T}_1$ maps $\mathfrak{B}_\mathsf{A}$ to itself and $\mathcal{T}_1$ is a contraction in $\mathfrak{B}_\mathsf{A}$.
\end{proposition}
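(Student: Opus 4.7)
The plan has two parts: first show invariance $\mathcal{T}_1(\mathfrak{B}_\mathsf{A})\subset\mathfrak{B}_\mathsf{A}$, and then verify the contraction estimate with constant $2/3$.

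For the invariance, fix $\mathbf{w}\in\mathfrak{B}_\mathsf{A}$ and set $(\mathbf{u},\mathsf{p})=\mathcal{T}(\mathbf{w},0)=\mathcal{B}^{-1}[\mathcal{F}-\mathcal{N}_{\mathcal{L}}(\mathbf{w},0)]$, so that $\mathcal{T}_1(\mathbf{w})=\mathbf{u}$. The divergence constraint $\operatorname{div}\mathbf{u}=0$ is automatic because the second component of $\mathcal{F}-\mathcal{N}_{\mathcal{L}}(\mathbf{w},0)$ vanishes (there is no contribution to $b_{+}$). The boundedness of $\mathcal{B}^{-1}:\mathcal{Y}'\to\mathcal{X}$ from Theorem \ref{eq:theorem_brinkman} combined with the bounds in Lemma \ref{eq:lemma01} yields
\[
\|\nabla\mathbf{u}\|_{\mathbf{L}^2(\omega,\Omega)}
\leq \|\mathcal{B}^{-1}\|\bigl(\|\mathbf{f}\|_{\mathbf{H}^{-1}(\omega,\Omega)}+C_e\|\nabla\mathbf{w}\|_{\mathbf{L}^2(\omega,\Omega)}^2\bigr)
\leq \|\mathcal{B}^{-1}\|\|\mathbf{f}\|_{\mathbf{H}^{-1}(\omega,\Omega)}+C_e\|\mathcal{B}^{-1}\|\mathsf{A}^2.
\]
Since $C_e\|\mathcal{B}^{-1}\|\mathsf{A}=1/3$ by definition of $\mathsf{A}$, the second term equals $\mathsf{A}/3$, and the smallness assumption \eqref{eq:assum01} makes the first term at most $\mathsf{A}/2<2\mathsf{A}/3$. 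Hence $\|\nabla\mathbf{u}\|_{\mathbf{L}^2(\omega,\Omega)}\leq\mathsf{A}$, proving invariance.

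For the contraction, take $\mathbf{w}_1,\mathbf{w}_2\in\mathfrak{B}_\mathsf{A}$, set $(\mathbf{u}_i,\mathsf{p}_i)=\mathcal{T}(\mathbf{w}_i,0)$, and exploit the linearity of $\mathcal{B}$ to write
\[
\mathcal{B}(\mathbf{u}_1-\mathbf{u}_2,\mathsf{p}_1-\mathsf{p}_2)
=-\bigl(\mathcal{N}_{\mathcal{L}}(\mathbf{w}_1,0)-\mathcal{N}_{\mathcal{L}}(\mathbf{w}_2,0)\bigr).
\]
I would estimate the right-hand side term by term. For the convective piece, the bilinear decomposition
$c(\mathbf{w}_1,\mathbf{w}_1;\mathbf{v})-c(\mathbf{w}_2,\mathbf{w}_2;\mathbf{v})=c(\mathbf{w}_1-\mathbf{w}_2,\mathbf{w}_1;\mathbf{v})+c(\mathbf{w}_2,\mathbf{w}_1-\mathbf{w}_2;\mathbf{v})$
together with Lemma \ref{eq:lemma01} and $\|\nabla\mathbf{w}_i\|_{\mathbf{L}^2(\omega,\Omega)}\leq\mathsf{A}$ yields a bound of $2C_{4\to 2}^2\mathsf{A}\|\nabla(\mathbf{w}_1-\mathbf{w}_2)\|_{\mathbf{L}^2(\omega,\Omega)}\|\nabla\mathbf{v}\|_{\mathbf{L}^2(\omega^{-1},\Omega)}$. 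For the Forchheimer piece I would use the pointwise bound
\[
\bigl||\mathbf{w}_1|\mathbf{w}_1-|\mathbf{w}_2|\mathbf{w}_2\bigr|
\leq |\mathbf{w}_1|\,|\mathbf{w}_1-\mathbf{w}_2|+\bigl||\mathbf{w}_1|-|\mathbf{w}_2|\bigr|\,|\mathbf{w}_2|
\leq \bigl(|\mathbf{w}_1|+|\mathbf{w}_2|\bigr)|\mathbf{w}_1-\mathbf{w}_2|,
\]
followed by the same weighted H\"older/embedding argument used in the proof of Lemma \ref{eq:lemma01}, producing the analogous bound with an extra factor $C_{2\to 2}$.

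Combining, $\|\mathcal{N}_{\mathcal{L}}(\mathbf{w}_1,0)-\mathcal{N}_{\mathcal{L}}(\mathbf{w}_2,0)\|_{\mathcal{Y}'}\leq 2C_e\mathsf{A}\|\nabla(\mathbf{w}_1-\mathbf{w}_2)\|_{\mathbf{L}^2(\omega,\Omega)}$. Applying $\mathcal{B}^{-1}$ and using $C_e\|\mathcal{B}^{-1}\|\mathsf{A}=1/3$,
\[
\|\nabla(\mathbf{u}_1-\mathbf{u}_2)\|_{\mathbf{L}^2(\omega,\Omega)}
\leq 2C_e\|\mathcal{B}^{-1}\|\mathsf{A}\,\|\nabla(\mathbf{w}_1-\mathbf{w}_2)\|_{\mathbf{L}^2(\omega,\Omega)}
=\tfrac{2}{3}\|\nabla(\mathbf{w}_1-\mathbf{w}_2)\|_{\mathbf{L}^2(\omega,\Omega)},
\]
so $\mathcal{T}_1$ is a contraction on $\mathfrak{B}_\mathsf{A}$ with constant $2/3$. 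The main technical obstacle is controlling the Forchheimer difference in the weighted setting; everything else is a routine reduction to the a priori bound of Theorem \ref{eq:theorem_brinkman} and the continuity estimates of Lemma \ref{eq:lemma01}.
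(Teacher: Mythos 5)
Your argument is correct and is essentially the argument the paper has in mind: the paper omits the details by citing the analogous result for the Navier--Stokes system in \cite{MR3998864}, and your write-up supplies exactly the standard invariance-plus-contraction computation, with the constants $\mathsf{A}/2+\mathsf{A}/3\leq\mathsf{A}$ and $2C_e\|\mathcal{B}^{-1}\|\mathsf{A}=2/3$ worked out consistently with the definitions of $\mathsf{A}$ and $C_e$. The only genuinely new ingredient relative to the cited Navier--Stokes proof is the Forchheimer difference, and your pointwise bound $\bigl||\mathbf{w}_1|\mathbf{w}_1-|\mathbf{w}_2|\mathbf{w}_2\bigr|\leq(|\mathbf{w}_1|+|\mathbf{w}_2|)|\mathbf{w}_1-\mathbf{w}_2|$ followed by the weighted H\"older/embedding argument of Lemma \ref{eq:lemma01} handles it correctly.
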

\begin{proof}
The proof follows the same arguments as in the proof of \cite[Proposition 1]{MR3998864}. For the sake of brevity, we skip the details.
\end{proof}

The existence and uniqueness of solutions for small data is as follows.

\begin{theorem}[well-posedness for small data]
\label{thm:existence_uniqueness}
Let $\Omega\subset\mathbb{R}^{2}$ be a bounded Lipschitz domain and $\omega\in A_2(\Omega)$. If $\mathbf{f}$ is such that \eqref{eq:assum01} holds, then problem \eqref{eq:modelweak} admits a unique solution $(\mathbf{u},\mathsf{p})$. Moreover, we have the bounds
\begin{align}\label{eq:est_nablaU}
\|\nabla\mathbf{u}\|_{\mathbf{L}^2(\omega,\Omega)}
&
\leq 
\tfrac{3}{2}\|\mathcal{B}^{-1}\| \|\mathbf{f}\|_{\mathbf{H}^{-1}(\omega,\Omega)},\\	\label{eq:est_press}
\|\mathsf{p}\|_{L^2(\omega,\Omega)}
&
\lesssim 
\|\nabla \mathbf{u}\|_{\mathbf{L}^2(\omega,\Omega)}+\|\nabla \mathbf{u}\|_{\mathbf{L}^2(\omega,\Omega)}^2+\|\mathbf{f}\|_{\mathbf{H}^{-1}(\omega,\Omega)},
\end{align}
where the hidden constants are independent of $\mathbf{u}$, $\mathsf{p}$, and $\mathbf{f}$.
\end{theorem}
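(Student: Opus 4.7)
The plan is to combine Proposition~\ref{eq:prop01} with Banach's fixed point theorem to produce a divergence-free velocity $\mathbf{u}\in\mathfrak{B}_\mathsf{A}$ solving the velocity equation, recover the pressure by a single application of $\mathcal{B}^{-1}$, and then sharpen the resulting a priori bounds into \eqref{eq:est_nablaU} and \eqref{eq:est_press}. Since $\mathfrak{B}_\mathsf{A}$ is a nonempty closed subset of the Banach space $\mathbf{H}_0^1(\omega,\Omega)$ and $\mathcal{T}_1:\mathfrak{B}_\mathsf{A}\to\mathfrak{B}_\mathsf{A}$ is a contraction under \eqref{eq:assum01} by Proposition~\ref{eq:prop01}, Banach's theorem delivers a unique $\mathbf{u}\in\mathfrak{B}_\mathsf{A}$ with $\mathbf{u}=\mathcal{T}_1(\mathbf{u})=P_r\mathcal{T}(\mathbf{u},0)$. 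Defining $(\mathbf{u},\mathsf{p}):=\mathcal{T}(\mathbf{u},0)=\mathcal{B}^{-1}[\mathcal{F}-\mathcal{N}_{\mathcal{L}}(\mathbf{u},0)]$, and using that $\mathcal{N}_{\mathcal{L}}$ depends only on the velocity argument so that $\mathcal{N}_{\mathcal{L}}(\mathbf{u},0)=\mathcal{N}_{\mathcal{L}}(\mathbf{u},\mathsf{p})$, one checks that $(\mathbf{u},\mathsf{p})\in\mathcal{X}$ satisfies $\mathcal{B}(\mathbf{u},\mathsf{p})+\mathcal{N}_{\mathcal{L}}(\mathbf{u},\mathsf{p})=\mathcal{F}$, i.e., problem \eqref{eq:modelweak}.

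For \eqref{eq:est_nablaU}, I would invert $\mathcal{B}$ in the operator equation, project onto the velocity component, and combine the boundedness of $\mathcal{B}^{-1}$ with Lemma~\ref{eq:lemma01} to obtain
\begin{equation*}
\|\nabla\mathbf{u}\|_{\mathbf{L}^2(\omega,\Omega)} \leq \|\mathcal{B}^{-1}\|\,\|\mathbf{f}\|_{\mathbf{H}^{-1}(\omega,\Omega)}+C_e\|\mathcal{B}^{-1}\|\,\|\nabla\mathbf{u}\|_{\mathbf{L}^2(\omega,\Omega)}^{2}.
\end{equation*}
The quadratic term is then absorbed into the left-hand side using the built-in a priori bound $\|\nabla\mathbf{u}\|_{\mathbf{L}^2(\omega,\Omega)}\leq\mathsf{A}=(3C_e\|\mathcal{B}^{-1}\|)^{-1}$ inherited from $\mathbf{u}\in\mathfrak{B}_\mathsf{A}$, which makes the quadratic contribution no larger than $\tfrac{1}{3}\|\nabla\mathbf{u}\|_{\mathbf{L}^2(\omega,\Omega)}$; moving it to the left-hand side yields $\tfrac{2}{3}\|\nabla\mathbf{u}\|_{\mathbf{L}^2(\omega,\Omega)}\leq\|\mathcal{B}^{-1}\|\,\|\mathbf{f}\|_{\mathbf{H}^{-1}(\omega,\Omega)}$ and hence \eqref{eq:est_nablaU}.

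For the pressure bound \eqref{eq:est_press}, I would exploit the weighted inf-sup condition \eqref{eq:infsup}. Rearranging the velocity equation gives $b_{-}(\mathbf{v},\mathsf{p})=\langle\mathbf{f},\mathbf{v}\rangle-a(\mathbf{u},\mathbf{v})-c(\mathbf{u},\mathbf{u};\mathbf{v})-d(\mathbf{u},\mathbf{u};\mathbf{v})$ for every $\mathbf{v}\in\mathbf{H}_0^1(\omega^{-1},\Omega)$. Bounding the right-hand side via the continuity of $a_{0}$, the weighted Poincar\'e inequality of \cite{MR643158} for $a_{1}$, and Lemma~\ref{eq:lemma01} for $c$ and $d$, then dividing by $\|\nabla\mathbf{v}\|_{\mathbf{L}^2(\omega^{-1},\Omega)}$ and taking the supremum, the inf-sup estimate \eqref{eq:infsup} produces $\|\mathsf{p}\|_{L^2(\omega,\Omega)}\lesssim\|\mathbf{f}\|_{\mathbf{H}^{-1}(\omega,\Omega)}+\|\nabla\mathbf{u}\|_{\mathbf{L}^2(\omega,\Omega)}+\|\nabla\mathbf{u}\|_{\mathbf{L}^2(\omega,\Omega)}^{2}$, which is \eqref{eq:est_press}.

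The subtle point in the whole argument is the absorption step for \eqref{eq:est_nablaU}: the quadratic inequality above has, in general, two non-negative roots under the smallness assumption, and only the smaller one is the physically relevant bound. Without the explicit control $\|\nabla\mathbf{u}\|_{\mathbf{L}^2(\omega,\Omega)}\leq\mathsf{A}$ coming from the fixed-point construction, one would have to select the right branch via a continuity/bootstrap argument; fortunately, the membership $\mathbf{u}\in\mathfrak{B}_\mathsf{A}$ makes this selection automatic and reduces the remaining task to a simple linear absorption.
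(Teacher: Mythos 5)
Your proposal is correct and follows essentially the same route as the paper: Proposition~\ref{eq:prop01} plus the Banach fixed point theorem for the velocity, the invertibility of $\mathcal{B}$ (equivalently, the weighted right inverse of the divergence) for the pressure, the absorption $C_e\|\mathcal{B}^{-1}\|\,\|\nabla\mathbf{u}\|^2\leq\tfrac13\|\nabla\mathbf{u}\|$ via $\mathbf{u}\in\mathfrak{B}_\mathsf{A}$ for \eqref{eq:est_nablaU}, and the inf-sup condition \eqref{eq:infsup} together with Lemma~\ref{eq:lemma01} for \eqref{eq:est_press}. Your observation that membership in $\mathfrak{B}_\mathsf{A}$ is what selects the correct branch of the quadratic inequality is exactly the (implicit) content of the paper's one-line estimate $\|\nabla\mathbf{u}\|_{\mathbf{L}^2(\omega,\Omega)}\leq\|\mathcal{B}^{-1}\|\,\|\mathbf{f}\|_{\mathbf{H}^{-1}(\omega,\Omega)}+\tfrac13\|\nabla\mathbf{u}\|_{\mathbf{L}^2(\omega,\Omega)}$.
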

\begin{proof}
We apply proposition \ref{eq:prop01} to deduce the existence of a unique fixed point $\mathbf{u}\in\mathfrak{B}_\mathsf{A}$ of $\mathcal{T}_1$. We now invoke the existence of a right inverse of the divergence operator over $A_2$-weighted spaces \cite[Theorem 3.1]{MR2731700} to obtain the existence and uniqueness of the pressure $\mathsf{p}$. To deduce \eqref{eq:est_nablaU} we use that $\mathbf{u}$ is the unique fixed point of $\mathcal{T}_1$:
\begin{eqnarray*}
\|\nabla \mathbf{u}\|_{\mathbf{L}^2(\omega,\Omega)} \leq \|\mathcal{B}^{-1}\| \|\mathbf{f}\|_{\mathbf{H}^{-1}(\omega,\Omega)}+\tfrac{1}{3}\|\nabla \mathbf{u}\|_{\mathbf{L}^2(\omega,\Omega)}.
\end{eqnarray*}
Finally, to obtain \eqref{eq:est_press} we utilize the weighted inf-sup condition \eqref{eq:infsup}. 
\end{proof}

\section{Finite element approximation: a priori error estimates}
\label{sec:fem}
In this section, we analyze a finite element solution technique that approximates solutions to \eqref{eq:modelweak}. To accomplish this task, we will begin the section by introducing some terminology and a few basic ingredients \cite{MR2373954,CiarletBook,Guermond-Ern}. In what follows, we operate under the assumption that $\Omega$ is a Lipschitz polytope so that it can be triangulated exactly.

\subsection{Triangulation and assumptions}
\label{eq:basic_discrete}

Let $\mathscr{T}= \{ K \}$ be a conforming partition of $\bar{\Omega}$ into closed triangles $K$. Define $h_K = \text{diam}(K)$ and $h_{\mathscr{T}} = \max \{ h_K: K \in \T\}$. We introduce $\mathbb{T}$ as a collection of shape regular conforming triangulations that are refinements of an initial mesh $\mathscr{T}_0$. We denote by $\mathscr{S}$ the set of internal interelement boundaries $\gamma$ of $\T$. For $\gamma \in \mathscr{S}$, we define $h_{\gamma}$ to be the length of $\gamma$. For $K \in \T$, we introduce the set $\mathscr{S}_K$ as the subset of $\mathscr{S}$ containing the sides of $K$. For $\gamma \in \mathscr{S}$, we introduce $\mathcal{N}_{\gamma}$ as the subset of $\T$ containing the two elements that have $\gamma$ as a side. For $K \in \T$, we define
\begin{equation}
\label{eq:patch}
\mathcal{N}_K=  \{K' \in \T: \mathscr{S}_K \cap \mathscr{S}_{K'} \neq \emptyset \},
\quad 
\mathcal{N}_K^*= \{K' \in \T: K \cap {K'} \neq \emptyset \}.
\end{equation}
Below we will indistinctively denote by $\mathcal{N}_K$, $\mathcal{N}_K^*$, and $\mathcal{N}_{\gamma}$ either the sets themselves or the union of the elements that comprise them.

Let $\mathscr{T} \in \mathbb{T}$. We denote by $\mathbf{V}(\T)$ and $\mathcal{P}(\T)$ the finite element spaces that approximate the velocity field and the pressure, respectively. We will work with the following two classical examples:

\begin{enumerate}
\item[(a)] The \emph{mini element}, which is defined by \cite[Section 4.2.4]{Guermond-Ern} 
\begin{eqnarray}\label{ME:vel_space}
\mathbf{V}(\T)&=&\{\mathbf{v}_{\T}\in\mathbf{C}(\overline{\Omega}):\forall K\in\T, \mathbf{v}_{\T}|_{K}\in[\mathbb{W}(K)]^2\} \cap \mathbf{H}_0^1(\Omega),\\ \label{ME:press_space}
\mathcal{P}(\T)&=&\{\mathsf{q}_{\T}\in L_0^2(\Omega)\cap C(\overline{\Omega}):\forall K\in\T, \mathsf{q}_{\T}|_{K}\in\mathbb{P}_1(K) \},
\end{eqnarray}
where $\mathbb{W}(K):=\mathbb{P}_1(K)\oplus \mathbb{B}(K)$ and $\mathbb{B}(K)$ denotes the space spanned by a local bubble function.

\item[(b)] The lowest order \emph{Taylor--Hood pair}, which is defined by \cite[Section 4.2.5]{Guermond-Ern}
\begin{eqnarray}\label{TH:vel_space}
\mathbf{V}(\T)&=&\{\mathbf{v}_{\T}\in\mathbf{C}(\overline{\Omega}):\forall K\in\T, \mathbf{v}_{\T}|_{K}\in[\mathbb{P}_2(K)]^2\} \cap \mathbf{H}_0^1(\Omega),\\ \label{TH:press_space}
\mathcal{P}(\T)&=&\{\mathsf{q}_{\T}\in L_0^2(\Omega)\cap C(\overline{\Omega}):\forall K\in\T, \mathsf{q}_{\T}|_{K}\in\mathbb{P}_1(K) \}.
\end{eqnarray}
\end{enumerate}

We must immediately notice that if $\omega\in A_2$, we have that the previously defined spaces are such that: $\mathbf{V}(\T)\subset \mathbf{W}_0^{1,\infty}(\Omega)\subset \mathbf{H}_0^1(\omega,\Omega)$ and $\mathcal{P}(\T)\subset L^{\infty}(\Omega)\subset L^2(\omega,\Omega)/\mathbb{R}$. In addition, these pairs of spaces satisfy the following compatibility condition \cite[Theorems 6.2 and 6.4]{MR4081912}: There exists $\beta >0$ such that
\begin{eqnarray}\label{eq:infsup_discrete}
\beta \|\mathsf{q}_{\T}\|_{L^2(\omega^{\pm 1},\Omega)} \leq 
\sup_{\mathbf{0} \neq \mathbf{v}_{\T}\in \mathbf{V}(\T)}\dfrac{b_{\mp}(\mathbf{v}_{\T},\mathsf{q}_{\T})}{\|\nabla\mathbf{v}_{\T}\|_{\mathbf{L}^2(\omega^{\mp 1},\Omega)}}
\quad
\forall \mathsf{q}_{\T} \in \mathcal{P}(\T).
\end{eqnarray}

As a final ingredient, if $K^{+}$, $K^{-} \in \T$ are such that $K^{+} \neq K^{-}$ and $\partial K^{+} \cap \partial K^{-} = \gamma$, we define the \emph{jump} or \emph{interelement residual} of a discrete tensor valued function $\mathbf{w}_{\T}$ on an internal side $\gamma \in \mathscr{S}$ by
\begin{equation}
\llbracket \mathbf{w}_{\T} \cdot \boldsymbol{\nu} \rrbracket:= \mathbf{w}_{\T} \cdot \boldsymbol{\nu}^{+} |^{}_{K^{+}} + \mathbf{w}_{\T} \cdot \boldsymbol{\nu}^{-}  |^{}_{K^{-}},
\label{eq:jump}
\end{equation}
where $\boldsymbol{\nu}^{+}$ and $\boldsymbol{\nu}^{-}$ correspond to the unit normals on $\gamma$ pointing towards $K^{+}$ and $K^{-}$, respectively.

\subsection{The scheme} Let $\omega\in A_{2}(\Omega)$ and $\mathbf{f}\in \mathbf{H}^{-1}(\omega,\Omega)$. We introduce the following discrete approximation of \eqref{eq:modelweak}: Find $(\mathbf{u}_{\T},\mathsf{p}_{\T})\in \mathbf{V}(\T)\times \mathcal{P}(\T) $ such that.
\begin{equation}\label{eq:model_discrete}
\begin{array}{rcl}
a(\mathbf{u}_{\T},\mathbf{v}_{\T}) + b_{-}(\mathbf{v}_{\T},\mathsf{p}_{\T}) + c(\mathbf{u}_{\T},\mathbf{u}_{\T};\mathbf{v}_{\T})+ d(\mathbf{u}_{\T},\mathbf{u}_{\T};\mathbf{v}_{\T})
&=&
\langle \mathbf{f},\mathbf{v}_{\T}\rangle,
\\
b_{+}(\mathbf{u}_{\T},\mathsf{q}_{\T})&=&0,
\end{array}
\end{equation}
for all $\mathbf{v}_{\T}\in \mathbf{V}(\T)$ and $\mathsf{q}_{\T}\in \mathcal{P}(\T)$, respectively. 

Let us denote by $\mathcal{B}_\T$ the discrete version of the operator $\mathcal{B}$ induced by the discretization \eqref{eq:model_discrete}. The following results are based on the following assumption:

\begin{assumption}
\label{assuption_brinkman}
The linear and bounded map $\mathcal{B}_\T$ is such that $\mathcal{B}_{\T}^{-1}$ is uniformly bounded over all partitions $\T$.
\end{assumption}

The fact that the operator $\mathcal{B}_{\T}^{-1}$ exists is not an issue. Since we are dealing with finite dimensional spaces, the existence and uniqueness of solutions to the discrete problem \eqref{eq:model_discrete} are guaranteed by the compatibility condition \eqref{eq:infsup_discrete}. The most important point in Assumption \ref{assuption_brinkman} is that $\mathcal{B}_{\T}^{-1}$ satisfies a suitable estimate with respect to the problem data that is uniform with respect to the discretization.

The existence of a unique discrete solution is the content of the following result.

\begin{theorem}[well-posedness for small data]\label{th:existen_discrete}
Let $\Omega \subset \mathbb{R}^2$ be a bounded Lipschitz polytope, and let $\omega \in A_2(\Omega)$. If $\mathbf{f}$ is such that \eqref{eq:assum01} holds with $\mathcal{B}^{-1}$ replaced by $\mathcal{B}^{-1}_{\T}$, then  \eqref{eq:model_discrete} admits a unique solution $(\mathbf{u}_{\T},\mathsf{p}_{\T})\in \mathbf{V}(\T)\times \mathcal{P}(\T)$ satisfying the stability bound
\begin{align}
\label{eq:est_nablaU_d}
\|\nabla\mathbf{u}_\T\|_{\mathbf{L}^2(\omega,\Omega)}&\leq \tfrac{3}{2}\|\mathcal{B}^{-1}_\T\| \|\mathbf{f}\|_{\mathbf{H}^{-1}(\omega,\Omega)},
\\
\label{eq:est_press_d}
\|\mathsf{p}_\T\|_{L^2(\omega,\Omega)}&\lesssim \|\nabla \mathbf{u}_\T\|_{\mathbf{L}^2(\omega,\Omega)}+\|\nabla \mathbf{u}_\T\|_{\mathbf{L}^2(\omega,\Omega)}^2+\|\mathbf{f}\|_{\mathbf{H}^{-1}(\omega,\Omega)},
\end{align}
The hidden constants are independent of $\mathbf{u}_\T$, $\mathsf{p}_{\T}$, and $\mathbf{f}$.
\end{theorem}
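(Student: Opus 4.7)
The plan is to mirror the continuous argument developed in Proposition \ref{eq:prop01} and Theorem \ref{thm:existence_uniqueness}, replacing every occurrence of $\mathcal{B}^{-1}$ by $\mathcal{B}_{\T}^{-1}$ and relying on Assumption \ref{assuption_brinkman} to obtain the required uniform bounds. Let me denote by $\mathcal{X}_{\T}:=\mathbf{V}(\T)\times\mathcal{P}(\T)$. Because $\mathbf{V}(\T)\subset\mathbf{H}_0^1(\omega^{\pm1},\Omega)$ and $\mathcal{P}(\T)\subset L^2(\omega^{\pm1},\Omega)/\mathbb{R}$, all forms $a,b_{\pm},c,d$ and the functional $\mathcal{F}$ remain well-defined on discrete pairs, and Lemma \ref{eq:lemma01} applies verbatim to $\mathbf{V}(\T)$-functions. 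Define $\mathcal{N}_{\mathcal{L}}(\mathbf{w},\mathsf{r})$ and $\mathcal{F}$ as in Section \ref{sec:existence_solutions} and introduce the discrete fixed-point map
\begin{equation*}
\mathcal{T}_{\T}:\mathcal{X}_{\T}\to\mathcal{X}_{\T},\qquad \mathcal{T}_{\T}(\mathbf{w}_{\T},\mathsf{r}_{\T})=\mathcal{B}_{\T}^{-1}\bigl[\mathcal{F}-\mathcal{N}_{\mathcal{L}}(\mathbf{w}_{\T},\mathsf{r}_{\T})\bigr],
\end{equation*}
which is well-defined by Assumption \ref{assuption_brinkman} together with the discrete inf-sup condition \eqref{eq:infsup_discrete}. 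Let $\mathcal{T}_{1,\T}:\mathbf{V}(\T)\to\mathbf{V}(\T)$ be the composition of $\mathbf{w}_{\T}\mapsto\mathcal{T}_{\T}(\mathbf{w}_{\T},0)$ with the velocity projection, and let $\mathfrak{B}_{\mathsf{A}}^{\T}$ be the intersection of $\mathfrak{B}_{\mathsf{A}}$ with the discretely solenoidal subspace of $\mathbf{V}(\T)$, where $\mathsf{A}:=(3C_e\|\mathcal{B}_{\T}^{-1}\|)^{-1}$ now uses the discrete inverse.

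First I would verify, following line by line the argument of Proposition \ref{eq:prop01}, that $\mathcal{T}_{1,\T}$ maps $\mathfrak{B}_{\mathsf{A}}^{\T}$ into itself and is a contraction under the smallness assumption \eqref{eq:assum01} with $\mathcal{B}^{-1}$ replaced by $\mathcal{B}_{\T}^{-1}$: indeed, given $\mathbf{w}_{\T}\in\mathfrak{B}_{\mathsf{A}}^{\T}$, the energy estimate for $\mathcal{B}_{\T}^{-1}$ furnished by Assumption \ref{assuption_brinkman} combined with Lemma \ref{eq:lemma01} yields
\begin{equation*}
\|\nabla\mathcal{T}_{1,\T}(\mathbf{w}_{\T})\|_{\mathbf{L}^2(\omega,\Omega)}\leq \|\mathcal{B}_{\T}^{-1}\|\bigl(\|\mathbf{f}\|_{\mathbf{H}^{-1}(\omega,\Omega)}+C_e\|\nabla\mathbf{w}_{\T}\|_{\mathbf{L}^2(\omega,\Omega)}^{2}\bigr)\leq\mathsf{A},
\end{equation*}
and similarly the Lipschitz estimate $\|\nabla(\mathcal{T}_{1,\T}(\mathbf{w}_{\T})-\mathcal{T}_{1,\T}(\tilde{\mathbf{w}}_{\T}))\|_{\mathbf{L}^2(\omega,\Omega)}\leq \tfrac{2}{3}\|\nabla(\mathbf{w}_{\T}-\tilde{\mathbf{w}}_{\T})\|_{\mathbf{L}^2(\omega,\Omega)}$ follows from the bilinearity in the first two slots of $c$ and the Lipschitz bound $\bigl||\mathbf{u}|-|\tilde{\mathbf{u}}|\bigr|\leq|\mathbf{u}-\tilde{\mathbf{u}}|$ used for $d$. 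The Banach fixed-point theorem then provides a unique $\mathbf{u}_{\T}\in\mathfrak{B}_{\mathsf{A}}^{\T}$; the associated pressure $\mathsf{p}_{\T}\in\mathcal{P}(\T)$ is obtained by invoking the discrete inf-sup condition \eqref{eq:infsup_discrete} applied to the residual $\langle\mathcal{F}-\mathcal{N}_{\mathcal{L}}(\mathbf{u}_{\T},0),(\mathbf{v}_{\T},0)\rangle-a(\mathbf{u}_{\T},\mathbf{v}_{\T})$, which vanishes on the discrete divergence-free subspace.

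The stability bound \eqref{eq:est_nablaU_d} is then immediate from the fixed-point identity: using $\|\nabla\mathbf{u}_{\T}\|_{\mathbf{L}^2(\omega,\Omega)}\leq\mathsf{A}$ and \eqref{eq:assum01},
\begin{equation*}
\|\nabla\mathbf{u}_{\T}\|_{\mathbf{L}^2(\omega,\Omega)}\leq \|\mathcal{B}_{\T}^{-1}\|\|\mathbf{f}\|_{\mathbf{H}^{-1}(\omega,\Omega)}+C_e\|\mathcal{B}_{\T}^{-1}\|\mathsf{A}\|\nabla\mathbf{u}_{\T}\|_{\mathbf{L}^2(\omega,\Omega)}\leq\|\mathcal{B}_{\T}^{-1}\|\|\mathbf{f}\|_{\mathbf{H}^{-1}(\omega,\Omega)}+\tfrac{1}{3}\|\nabla\mathbf{u}_{\T}\|_{\mathbf{L}^2(\omega,\Omega)},
\end{equation*}
and \eqref{eq:est_press_d} follows from \eqref{eq:infsup_discrete} applied to $\mathsf{p}_{\T}$, the bound $|b_{-}(\mathbf{v}_{\T},\mathsf{p}_{\T})|\leq|\langle\mathbf{f},\mathbf{v}_{\T}\rangle|+|a(\mathbf{u}_{\T},\mathbf{v}_{\T})|+|c(\mathbf{u}_{\T},\mathbf{u}_{\T};\mathbf{v}_{\T})|+|d(\mathbf{u}_{\T},\mathbf{u}_{\T};\mathbf{v}_{\T})|$ read off from \eqref{eq:model_discrete}, and Lemma \ref{eq:lemma01}. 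The one conceptually nontrivial point — and the reason Assumption \ref{assuption_brinkman} is imposed — is that although the discrete Brinkman problem is trivially solvable by \eqref{eq:infsup_discrete}, a mesh-independent bound on $\|\mathcal{B}_{\T}^{-1}\|$ is not automatic in the weighted setting: it is exactly this uniformity that guarantees both a smallness condition \eqref{eq:assum01} independent of $\T$ and, ultimately, the constants hidden in \eqref{eq:est_press_d} to be independent of $\T$. Everything else is a routine transcription of the continuous argument.
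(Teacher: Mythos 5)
Your proposal is correct and follows essentially the same route as the paper, which simply states that the discrete result is obtained by repeating the arguments of Theorem \ref{thm:existence_uniqueness} (and Proposition \ref{eq:prop01}) with $\mathcal{B}^{-1}$ replaced by the uniformly bounded $\mathcal{B}_{\T}^{-1}$ from Assumption \ref{assuption_brinkman}. Your transcription — contraction of $\mathcal{T}_{1,\T}$ on the discretely solenoidal ball, recovery of $\mathsf{p}_{\T}$ and of \eqref{eq:est_press_d} via the discrete inf-sup condition \eqref{eq:infsup_discrete} — is exactly the intended argument, carried out in more detail than the paper provides.
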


\begin{proof}
The proof follows from the arguments developed in the proof of Theorem \ref{thm:existence_uniqueness}. We briefly mention that instead of $\mathcal{B}^{-1}$ we use the fact that $\mathcal{B}^{-1}_{\T}$ is uniformly bounded with respect to discretization.
\end{proof}

To present the auxiliary estimate of Lemma \ref{lemma:aux_result} and the quasi-best approximation result of Theorem \ref{thm:quasi_best}, we will operate under the following assumption: Let $\Omega$ be a convex polytope, $\omega\in A_2(\Omega)$, and $(\mathbf{u},\mathsf{p})\in \mathbf{H}_0^1(\omega,\Omega)\times L^2(\omega,\Omega)/\mathbb{R}$ with $\mathbf{u}$ solenoidal. Let $(\mathscr{B}_{\mathscr{T}}\mathbf{u},\mathscr{B}_{\mathscr{T}}\mathsf{p}) \in \mathbf{V}(\T) \times \mathcal{P}(\T)$ be the \emph{Brinkman projection} of $(\mathbf{u},\mathsf{p})$, i.e., the pair $(\mathscr{B}_{\mathscr{T}}\mathbf{u},\mathscr{B}_{\mathscr{T}}\mathsf{p})$ is such that
\begin{equation}\label{eq:proj_brinkman}
\begin{array}{rcl}
a(\mathscr{B}_{\mathscr{T}}\mathbf{u},\mathbf{v}_{\T})+b_{-}(\mathbf{v}_{\T},\mathscr{B}_{\mathscr{T}}\mathsf{p})&=&a(\mathbf{u},\mathbf{v}_{\T})+b_{-}(\mathbf{v}_{\T},\mathsf{p}),\\
b_{+}(\mathscr{B}_{\mathscr{T}}\mathbf{u},\mathsf{q}_{\T})&=&0,
\end{array}
\end{equation}
for all $\mathbf{v}_{\T}\in\mathbf{V}(\T)$ and $\mathsf{q}_{\T}\in\mathcal{P}(\T)$. Then, we have
\begin{eqnarray}
\label{eq:brinkman_prj_estimate01}
\|\nabla \mathscr{B}_{\mathscr{T}}\mathbf{u}\|_{\mathbf{L}^2(\omega,\Omega)} +\|\mathscr{B}_{\mathscr{T}}\mathsf{p}\|_{L^2(\omega,\Omega)}\lesssim  \|\nabla \mathbf{u}\|_{\mathbf{L}^2(\omega,\Omega)} +\|\mathsf{p}\|_{L^2(\omega,\Omega)},
\end{eqnarray}
where the hidden constant is independent of $(\mathbf{u},\mathsf{p})$, $(\mathscr{B}_{\mathscr{T}}\mathbf{u},\mathscr{B}_{\mathscr{T}}\mathsf{p})$, and $h_{\T}$. When the Brinkman operator in \eqref{eq:proj_brinkman} is replaced by the Stokes operator, the desired estimate can be found in \cite[Theorem 4.1]{MR4081912}. We note that in view of the arguments developed in the proof of \cite[Theorem 4.1]{MR4081912}, the only missing ingredient to obtain \eqref{eq:brinkman_prj_estimate01} is the error estimate \cite[estimate (3.9)]{MR4081912} for a regularized Green's function. If this estimate were available for the Brinkman operator and the finite element pairs considered in \eqref{ME:vel_space}--\eqref{ME:press_space} and \eqref{TH:vel_space}--\eqref{TH:press_space}, then the desired estimate \eqref{eq:brinkman_prj_estimate01} would follow immediately.

\begin{lemma}[auxiliary estimate]
\label{lemma:aux_result}
Let $\Omega \subset \mathbb{R}^2$ be a convex polytope, and let $\omega \in A_2(\Omega)$. If $\mathbf{f}$ is such that \eqref{eq:assum01} holds, then
\begin{multline}
\label{eq:brinkman_prj_estimate02}
\|\nabla(\mathbf{u} - \mathscr{B}_{\mathscr{T}}\mathbf{u})\|_{\mathbf{L}^2(\omega,\Omega)}
+
\|\mathsf{p}-\mathscr{B}_{\T}\mathsf{p}\|_{L^2(\omega,\Omega)}
\\
\lesssim 
\inf_{\mathbf{w}_{\T}\in \mathbf{V}(\T)} 
\|\nabla(\mathbf{u}-\mathbf{w}_{\T})\|_{\mathbf{L}^2(\omega,\Omega)}
+
\inf_{\mathsf{q}_{\T}\in \mathcal{P}(\T)}\|\mathsf{p}-\mathsf{q}_{\T}\|_{L^2(\omega,\Omega)},
\end{multline}
where the hidden constant is independent of $(\mathbf{u},\mathsf{p})$, $(\mathscr{B}_{\mathscr{T}}\mathbf{u},\mathscr{B}_{\mathscr{T}}\mathsf{p})$, and $h_{\T}$.
\end{lemma}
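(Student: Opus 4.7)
The plan is a classical C\'ea-type argument for the mixed discrete Brinkman system, adapted to the weighted setting: combine linearity of the projection with the discrete stability \eqref{eq:brinkman_prj_estimate01} to obtain a quasi-best approximation over the discrete kernel, then invoke the inf-sup condition \eqref{eq:infsup_discrete} to lift the infimum from the kernel to the full velocity space.

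First I would fix an arbitrary $\mathbf{w}_\T$ in the discrete kernel $\mathbf{V}_0(\T):=\{\mathbf{v}_\T\in \mathbf{V}(\T): b_+(\mathbf{v}_\T,\mathsf{q}_\T)=0 \text{ for all } \mathsf{q}_\T\in \mathcal{P}(\T)\}$ and an arbitrary $\mathsf{r}_\T\in\mathcal{P}(\T)$. Subtracting $(\mathbf{w}_\T,\mathsf{r}_\T)$ in \eqref{eq:proj_brinkman} shows that the pair $(\boldsymbol{\xi}_\T,\zeta_\T):=(\mathscr{B}_\T\mathbf{u}-\mathbf{w}_\T,\mathscr{B}_\T\mathsf{p}-\mathsf{r}_\T)\in\mathbf{V}(\T)\times\mathcal{P}(\T)$ solves the discrete Brinkman system
\begin{align*}
a(\boldsymbol{\xi}_\T,\mathbf{v}_\T)+b_-(\mathbf{v}_\T,\zeta_\T) &= a(\mathbf{u}-\mathbf{w}_\T,\mathbf{v}_\T)+b_-(\mathbf{v}_\T,\mathsf{p}-\mathsf{r}_\T),\\
b_+(\boldsymbol{\xi}_\T,\mathsf{q}_\T) &= 0,
\end{align*}
for all $(\mathbf{v}_\T,\mathsf{q}_\T)\in \mathbf{V}(\T)\times\mathcal{P}(\T)$, where the vanishing divergence constraint uses both $\textnormal{div }\mathbf{u}=0$ and $\mathbf{w}_\T\in\mathbf{V}_0(\T)$. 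Interpreting this as a discrete Brinkman projection with right-hand side $\mathbf{F}:\mathbf{v}\mapsto a(\mathbf{u}-\mathbf{w}_\T,\mathbf{v})+b_-(\mathbf{v},\mathsf{p}-\mathsf{r}_\T)\in\mathbf{H}^{-1}(\omega,\Omega)$, the stability embodied in \eqref{eq:brinkman_prj_estimate01} --- equivalently, the uniform bound on $\mathcal{B}_\T^{-1}$ granted by Assumption \ref{assuption_brinkman} --- yields
\begin{equation*}
\|\nabla\boldsymbol{\xi}_\T\|_{\mathbf{L}^2(\omega,\Omega)}+\|\zeta_\T\|_{L^2(\omega,\Omega)}\lesssim \|\nabla(\mathbf{u}-\mathbf{w}_\T)\|_{\mathbf{L}^2(\omega,\Omega)}+\|\mathsf{p}-\mathsf{r}_\T\|_{L^2(\omega,\Omega)},
\end{equation*}
once $\|\mathbf{F}\|_{\mathbf{H}^{-1}(\omega,\Omega)}$ is controlled by the right-hand side using Cauchy--Schwarz together with the weighted Poincar\'e inequality for the $a_1$-contribution. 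A triangle inequality then delivers the desired bound \eqref{eq:brinkman_prj_estimate02} with the velocity infimum restricted to $\mathbf{V}_0(\T)$.

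To close the argument, a Fortin-type reduction based on \eqref{eq:infsup_discrete} replaces the infimum over $\mathbf{V}_0(\T)$ by the infimum over $\mathbf{V}(\T)$: for every $\mathbf{v}_\T\in\mathbf{V}(\T)$, the discrete inf-sup condition produces a corrector $\mathbf{z}_\T\in\mathbf{V}(\T)$ with $b_+(\mathbf{z}_\T,\mathsf{q}_\T)=-b_+(\mathbf{v}_\T,\mathsf{q}_\T)=b_+(\mathbf{u}-\mathbf{v}_\T,\mathsf{q}_\T)$ for all $\mathsf{q}_\T\in\mathcal{P}(\T)$ and $\|\nabla\mathbf{z}_\T\|_{\mathbf{L}^2(\omega,\Omega)}\lesssim \|\nabla(\mathbf{u}-\mathbf{v}_\T)\|_{\mathbf{L}^2(\omega,\Omega)}$; the element $\mathbf{v}_\T+\mathbf{z}_\T\in\mathbf{V}_0(\T)$ then provides a kernel approximation of $\mathbf{u}$ whose error is controlled by the best approximation in $\mathbf{V}(\T)$. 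The main obstacle lies in the second step: \eqref{eq:brinkman_prj_estimate01} is stated for \emph{solenoidal} inputs, while $\mathbf{u}-\mathbf{w}_\T$ is at best discretely solenoidal, so this mismatch must be bridged either by a small extension of \eqref{eq:brinkman_prj_estimate01} to non-solenoidal sources or, as outlined, by invoking Assumption \ref{assuption_brinkman} directly to absorb a generic $\mathbf{H}^{-1}(\omega,\Omega)$ load.
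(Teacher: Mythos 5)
Your argument is correct and is exactly the standard route the paper has in mind: the paper's proof simply defers to the arguments of \cite[Corollary 4.2]{MR4081912}, which consist of the same kernel-restricted C\'ea estimate (subtracting a discretely solenoidal $\mathbf{w}_{\T}$, invoking the uniform discrete stability of the Brinkman operator, and applying the triangle inequality) followed by the Fortin-type correction via \eqref{eq:infsup_discrete} to pass from $\mathbf{V}_0(\T)$ to $\mathbf{V}(\T)$. Your closing remark correctly identifies that the stability must be applied to a generic discrete load rather than to a Brinkman projection of a solenoidal field, and your resolution via Assumption \ref{assuption_brinkman} is the right one.
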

\begin{proof}
The proof is rather standard; it follows, for instance, from the arguments developed in the proof of \cite[Corollary 4.2]{MR4081912}.
\end{proof}

As it is useful for the following analysis, we define, for $\mathbf{v}\in\mathbf{H}_0^1(\omega^{-1},\Omega)$,
\begin{equation}\label{eq:Theta}
\Theta(\mathbf{u},\mathbf{u}_{\T};\mathbf{v}):=c(\mathbf{u},\mathbf{u};\mathbf{v})-c(\mathbf{u}_{\T},\mathbf{u}_{\T};\mathbf{v})+d(\mathbf{u},\mathbf{u};\mathbf{v})-d(\mathbf{u}_{\T},\mathbf{u}_{\T};\mathbf{v}).
\end{equation}
Note that $\Theta(\mathbf{u},\mathbf{u}_{\T};\mathbf{v}) = c(\mathbf{u},\mathbf{e}_{\mathbf{u}};\mathbf{v})+c(\mathbf{e}_{\mathbf{u}},\mathbf{u}_{\T};\mathbf{v})+d(\mathbf{u},\mathbf{u};\mathbf{v})-d(\mathbf{u}_{\T},\mathbf{u}_{\T};\mathbf{v})$. Moreover, for any $\mathbf{v} \in \mathbf{H}_0^1(\omega^{-1},\Omega)$, the following bound can be derived from the estimates in \eqref{eq:estimate_NL} allow us to conclude the following bound:
\begin{multline*}
| \Theta(\mathbf{u},\mathbf{u}_{\T}; \mathbf{v} ) |
\leq
C_{4\to 2}^2 \left(
\|\nabla \mathbf{u}\|_{\mathbf{L}^2(\omega,\Omega)}
+
\|\nabla \mathbf{u}_{\T}\|_{\mathbf{L}^2(\omega,\Omega)}
\right)
\|\nabla \mathbf{e}_{\mathbf{u}} \|_{\mathbf{L}^2(\omega,\Omega)}\|\nabla \mathbf{v}\|_{\mathbf{L}^2(\omega^{-1},\Omega)}
\\
+C_{4\to 2}^2 C_{2\to 2} \left( \|\nabla \mathbf{u}\|_{\mathbf{L}^2(\omega,\Omega)} + \|\nabla \mathbf{u}_{\T}\|_{\mathbf{L}^2(\omega,\Omega)} \right) \|\nabla \mathbf{e}_{\mathbf{u}}\|_{\mathbf{L}^2(\omega,\Omega)}\|\nabla \mathbf{v}\|_{\mathbf{L}^2(\omega^{-1},\Omega)},
\end{multline*}
where we have written the difference $\mathfrak{D}:= d(\mathbf{u},\mathbf{u};\mathbf{v})-d(\mathbf{u}_{\T},\mathbf{u}_{\T};\mathbf{v})$ as $\mathfrak{D} = [d(\mathbf{u},\mathbf{u};\mathbf{v})-d(\mathbf{u}_{\T},\mathbf{u};\mathbf{v})] + d(\mathbf{u}_{\T},\mathbf{u} - \mathbf{u}_{\T};\mathbf{v})$ and used that $| |\mathbf{u}|\mathbf{u} \cdot \mathbf{v} - |\mathbf{u}_{\T}| \mathbf{u} \cdot \mathbf{v}| \leq | \mathbf{u} - \mathbf{u}_{\T} | | \mathbf{u} | | \mathbf{v} |$.

We now obtain the following quasi-best approximation result.

\begin{theorem}[quasi-best approximation result]
\label{thm:quasi_best}
Let $\Omega \subset \mathbb{R}^2$ be a convex polytope, and let $\omega \in A_2(\Omega)$. Let $\mathbf{f}$ be sufficiently small so that \eqref{eq:assum01} holds and that \eqref{eq:assum01} also holds, but with $\mathcal{B}^{-1}$ replaced by $\mathcal{B}^{-1}_{\T}$. Let us also assume that
\begin{align}\label{eq:condition_estimate_apriori}
\dfrac{3C}{2}\|\mathbf{f}\|_{\mathbf{H}^{-1}(\omega,\Omega)}\left[\|\mathcal{B}^{-1}\| +\|\mathcal{B}^{-1}_\T\|\right]
<
\frac{1}{2},
\end{align}
where $C :=C_{\mathscr{B}}C_{4\to 2}^2(1+C_{2\to 2})$. Here, $C_{\mathscr{B}}>0$ denotes the hidden constant in the bound \eqref{eq:brinkman_prj_estimate01} and $C_{4\to 2}$ and $C_{2\to 2}$ denote the best constants in the Sobolev embeddings $\mathbf{H}_0^{1}(\omega,\Omega)\hookrightarrow \mathbf{L}^{4}(\omega,\Omega)$ and $\mathbf{H}_0^{1}(\omega^{-1},\Omega)\hookrightarrow \mathbf{L}^{2}(\omega^{-1},\Omega)$, respectively. Then, we can establish the following quasi-best approximation result:
\begin{multline}\label{eq:quasi_best_estimate}
\|\nabla(\mathbf{u}-\mathbf{u}_{\T})\|_{\mathbf{L}^2(\omega,\Omega)}
+
\|\mathsf{p}-\mathsf{p}_{\T}\|_{L^2(\omega,\Omega)}
\\
\lesssim 
\inf_{\mathbf{w}_{\T}\in \mathbf{V}(\T)} 
\|\nabla(\mathbf{u}\!-\!\mathbf{w}_{\T})\|_{\mathbf{L}^2(\omega,\Omega)}
+ 
\inf_{\mathsf{q}_{\T}\in \mathcal{P}(\T)} 
\|\mathsf{p}\!-\!\mathsf{q}_{\T}\|_{L^2(\omega,\Omega)},
\end{multline}
where the hidden constant may depend on $\mathbf{f}$ and $\mathbf{u}$, but is independent of $h_{\T}$.
\end{theorem}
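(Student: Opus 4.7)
The strategy is to use the Brinkman projection $(\mathscr{B}_{\T}\mathbf{u}, \mathscr{B}_{\T}\mathsf{p}) \in \mathbf{V}(\T) \times \mathcal{P}(\T)$ defined by \eqref{eq:proj_brinkman} as an intermediate quantity and to split the error by writing $\mathbf{u} - \mathbf{u}_\T = (\mathbf{u} - \mathscr{B}_\T \mathbf{u}) + (\mathscr{B}_\T \mathbf{u} - \mathbf{u}_\T)$, and similarly for the pressure. Setting $\mathbf{E}_{\mathbf{u}} := \mathscr{B}_\T \mathbf{u} - \mathbf{u}_\T \in \mathbf{V}(\T)$ and $E_{\mathsf{p}} := \mathscr{B}_\T \mathsf{p} - \mathsf{p}_\T \in \mathcal{P}(\T)$, the triangle inequality and Lemma \ref{lemma:aux_result} reduce the task to controlling the \emph{discrete remainder} $\|\nabla\mathbf{E}_\mathbf{u}\|_{\mathbf{L}^2(\omega,\Omega)} + \|E_\mathsf{p}\|_{L^2(\omega,\Omega)}$, since the \emph{projection error} is already bounded by the best approximation.

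To bound the discrete remainder, I would subtract the discrete scheme \eqref{eq:model_discrete} from the continuous weak formulation \eqref{eq:modelweak} restricted to discrete test functions, and substitute the defining identities \eqref{eq:proj_brinkman} of the Brinkman projection. Using the shorthand \eqref{eq:Theta}, a direct computation (together with the Galerkin orthogonality $b_{+}(\mathbf{E}_{\mathbf{u}},\mathsf{q}_\T)=0$, which follows since $\mathbf{u}$, $\mathbf{u}_\T$, and $\mathscr{B}_\T\mathbf{u}$ are all discretely solenoidal) yields the discrete Brinkman-type system
\begin{align*}
a(\mathbf{E}_{\mathbf{u}}, \mathbf{v}_{\T}) + b_{-}(\mathbf{v}_{\T}, E_{\mathsf{p}}) &= -\Theta(\mathbf{u}, \mathbf{u}_{\T}; \mathbf{v}_{\T}),  \\
b_{+}(\mathbf{E}_{\mathbf{u}}, \mathsf{q}_{\T}) &= 0,
\end{align*}
for all $(\mathbf{v}_\T,\mathsf{q}_\T) \in \mathbf{V}(\T) \times \mathcal{P}(\T)$. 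Exploiting the stability of the Brinkman projection (the constant $C_{\mathscr{B}}$ in \eqref{eq:brinkman_prj_estimate01}), or equivalently the uniform boundedness of $\mathcal{B}_\T^{-1}$ granted by Assumption \ref{assuption_brinkman}, this delivers
\[
\|\nabla \mathbf{E}_{\mathbf{u}}\|_{\mathbf{L}^{2}(\omega,\Omega)} + \|E_{\mathsf{p}}\|_{L^{2}(\omega,\Omega)} \lesssim C_{\mathscr{B}}\, \|\Theta(\mathbf{u},\mathbf{u}_{\T};\cdot)\|_{\mathbf{H}^{-1}(\omega,\Omega)}.
\]

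Finally, the $\Theta$-bound already derived in the text gives the Lipschitz-type estimate $\|\Theta(\mathbf{u},\mathbf{u}_\T;\cdot)\|_{\mathbf{H}^{-1}(\omega,\Omega)} \leq C_{4\to 2}^2(1+C_{2\to 2})(\|\nabla\mathbf{u}\|_{\mathbf{L}^2(\omega,\Omega)} + \|\nabla\mathbf{u}_\T\|_{\mathbf{L}^2(\omega,\Omega)})\,\|\nabla(\mathbf{u}-\mathbf{u}_\T)\|_{\mathbf{L}^2(\omega,\Omega)}$, and the a priori bounds \eqref{eq:est_nablaU}, \eqref{eq:est_nablaU_d} allow us to replace $\|\nabla\mathbf{u}\| + \|\nabla\mathbf{u}_\T\|$ by $\tfrac{3}{2}\|\mathbf{f}\|_{\mathbf{H}^{-1}(\omega,\Omega)}(\|\mathcal{B}^{-1}\| + \|\mathcal{B}_\T^{-1}\|)$. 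The coefficient multiplying $\|\nabla(\mathbf{u}-\mathbf{u}_\T)\|$ is then precisely the left-hand side of \eqref{eq:condition_estimate_apriori}, hence strictly less than $1/2$. Inserting $\|\nabla(\mathbf{u}-\mathbf{u}_\T)\|\leq\|\nabla(\mathbf{u}-\mathscr{B}_\T\mathbf{u})\|+\|\nabla\mathbf{E}_\mathbf{u}\|$ on the right and absorbing the $\|\nabla\mathbf{E}_\mathbf{u}\|$ term into the left gives $\|\nabla\mathbf{E}_\mathbf{u}\| + \|E_\mathsf{p}\| \lesssim \|\nabla(\mathbf{u}-\mathscr{B}_\T\mathbf{u})\|_{\mathbf{L}^2(\omega,\Omega)}$; combining with Lemma \ref{lemma:aux_result} and the initial triangle inequality then yields \eqref{eq:quasi_best_estimate}. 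The main obstacle is precisely this absorption step, which requires the smallness hypothesis \eqref{eq:condition_estimate_apriori} to be exactly calibrated to the product of constants arising from discrete Brinkman stability, the $\Theta$-estimate, and the velocity a priori bounds; a subtle feature is that smallness must hold uniformly at both the continuous and discrete Brinkman levels, which explains why \eqref{eq:condition_estimate_apriori} features the sum $\|\mathcal{B}^{-1}\|+\|\mathcal{B}_\T^{-1}\|$.
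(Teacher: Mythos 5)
Your proposal is correct and follows essentially the same route as the paper: the error is split through the Brinkman projection, the discrete remainder $(\mathscr{B}_{\T}\mathbf{u}-\mathbf{u}_{\T},\mathscr{B}_{\T}\mathsf{p}-\mathsf{p}_{\T})$ is identified as the solution of a discrete Brinkman system with datum $-\Theta(\mathbf{u},\mathbf{u}_{\T};\cdot)$, and the discrete stability, the Lipschitz bound on $\Theta$, the a priori bounds \eqref{eq:est_nablaU} and \eqref{eq:est_nablaU_d}, Lemma \ref{lemma:aux_result}, and the absorption permitted by \eqref{eq:condition_estimate_apriori} are combined exactly as in the paper's argument. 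The only difference is the (immaterial) order in which the triangle inequality and the absorption step are performed.
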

\begin{proof}
Define $\mathbf{e}_{\mathscr{T}}:=\mathscr{B}_{\mathscr{T}}\mathbf{u}-\mathbf{u}_{\mathscr{T}}$ and $\varepsilon_{\mathscr{T}}:=\mathscr{B}_{\mathscr{T}}\mathsf{p}-\mathsf{p}_{\mathscr{T}}$, where $(\mathscr{B}_{\mathscr{T}}\mathbf{u},\mathscr{B}_{\mathscr{T}}\mathsf{p})$ corresponds to the \emph{Brinkman projection} of $(\mathbf{u},\mathsf{p})$. Invoke the definition of the \emph{Brinkman projection} to infer that
\begin{equation}\label{eq:proj_01}
\begin{array}{rcl}
a(\mathbf{e}_{\mathscr{T}},\mathbf{v}_{\T})+b_{-}(\mathbf{v}_{\T},\varepsilon_{\mathscr{T}})&=&-\Theta(\mathbf{u},\mathbf{u}_{\T};\mathbf{v}_{\mathscr{T}})
\quad
\forall \mathbf{v}_{\T}\in \mathbf{V}(\T),
\\
b_{+}(\mathbf{e}_{\mathscr{T}},\mathsf{q}_{\T})&=&0
\quad
\forall \mathsf{q}_{\T}\in \mathcal{P}(\T).
\end{array}
\end{equation}
We now use the stability bound \eqref{eq:brinkman_prj_estimate01} of the \emph{Brinkman projection} and the previously derived bound for the term $\Theta$ defined in \eqref{eq:Theta} to obtain
\begin{multline}
\|\nabla\mathbf{e}_{\mathscr{T}}\|_{\mathbf{L}^2(\omega,\Omega)}
+
\| \varepsilon_{\mathscr{T}} \|_{L^2(\omega,\Omega)}
\\
\leq
C\left[\|\nabla \mathbf{u}\|_{\mathbf{L}^2(\omega,\Omega)}
+
\|\nabla \mathbf{u}_{\mathscr{T}}\|_{\mathbf{L}^2(\omega,\Omega)}\right]\|\nabla (\mathbf{u}-\mathbf{u}_{\mathscr{T}})\|_{\mathbf{L}^2(\omega,\Omega)},
\label{eq:aux_bound}
\end{multline}
where $C=C_{\mathscr{B}}C_{4\to 2}^2(1+C_{2\to 2})$ and $C_{\mathscr{B}}>0$ corresponds to the hidden constant in the estimate \eqref{eq:brinkman_prj_estimate01}. The error can therefore be controlled with a simple application of the triangle inequality, which shows that
\begin{multline*}
 \|\nabla(\mathbf{u}-\mathbf{u}_{\T})\|_{\mathbf{L}^2(\omega,\Omega)}
+
\|\mathsf{p}-\mathsf{p}_{\T}\|_{L^2(\omega,\Omega)}
\leq
\|\nabla(\mathbf{u}-\mathscr{B}_{\mathscr{T}}\mathbf{u})\|_{\mathbf{L}^2(\omega,\Omega)}
\\
+
\|\mathsf{p}-\mathscr{B}_{\mathscr{T}}\mathsf{p}\|_{L^2(\omega,\Omega)}
+
\| \nabla\mathbf{e}_{\mathscr{T}} \|_{\mathbf{L}^2(\omega,\Omega)}
+
\| \varepsilon_{\mathscr{T}}\|_{L^2(\omega,\Omega)},
\end{multline*}
combined with the estimate of Lemma \ref{lemma:aux_result}, the bound \eqref{eq:aux_bound}, and the estimates \eqref{eq:est_nablaU} and \eqref{eq:est_nablaU_d}. In fact, we have
\begin{multline*}
 \|\nabla(\mathbf{u}-\mathbf{u}_{\T})\|_{\mathbf{L}^2(\omega,\Omega)} 
+
\|\mathsf{p}-\mathsf{p}_{\T}\|_{L^2(\omega,\Omega)}
\lesssim
\inf_{\mathbf{w}_{\T}\in \mathbf{V}(\T)}
\|\nabla(\mathbf{u}-\mathbf{w}_{\T})\|_{\mathbf{L}^2(\omega,\Omega)}
\\+\inf_{\mathsf{q}_{\T}\in \mathcal{P}(\T)} 
\|\mathsf{p}-\mathsf{q}_{\T}\|_{L^2(\omega,\Omega)}
+
\dfrac{3C}{2}\|\mathbf{f}\|_{\mathbf{H}^{-1}(\omega,\Omega)}\left[\|\mathcal{B}^{-1}\| +\|\mathcal{B}^{-1}_\T\|\right]
\|\nabla (\mathbf{u}-\mathbf{u}_{\mathscr{T}})\|_{\mathbf{L}^2(\omega,\Omega)}.
\end{multline*}
To conclude the proof, we use the assumption \eqref{eq:condition_estimate_apriori} so that the term involving $\|\nabla (\mathbf{u}-\mathbf{u}_{\mathscr{T}})\|_{\mathbf{L}^2(\omega,\Omega)}$ that appears on the right-hand side of the previous estimate can be absorbed into the left.
\end{proof}



\section{Finite element approximation: a posteriori error estimates}
\label{sec:a_posteriori_anal}
In this section, we develop an a posteriori error estimator for problem \eqref{eq:model_discrete}. To do so, we assume that the external density force $\mathbf{f}$ has a certain structure, i.e., $\mathbf{f}:= \mathbf{F}\delta_{\mathbf{z}}$, where $\mathbf{F}\in\mathbb{R}^2$.
It is therefore appropriate to consider $\alpha \in (0,2)$ and the weight $\mathsf{d}_{\mathbf{z}}^{\alpha}$ as defined in \eqref{eq:weight_A2}. We note that $\mathsf{d}_{\mathbf{z}}^{\pm \alpha} \in A_2$, $\mathsf{d}_{\mathbf{z}}^{\alpha} \in A_2(\Omega)$, and $\delta_{\mathbf{z}} \in H_0^1(\mathsf{d}_{\mathbf{z}}^{-\alpha},\Omega)'$; see \cite[Lemma 7.1.3]{MR1469972} and \cite[Remark 21.19]{MR2305115}.

In what follows, we assume that $\mathbf{F} \in \mathbb{R}^2$ is such that \eqref{eq:assum01} holds. Moreover, we assume that $\mathbf{F} \in \mathbb{R}^2$ is such that \eqref{eq:assum01} holds with $\mathcal{B}^{-1}$ replaced by $\mathcal{B}_{\T}^{-1}$. We note that under these conditions, problems \eqref{eq:modelweak} and \eqref{eq:model_discrete} are well-posed; see Theorems \ref{thm:existence_uniqueness} and \ref{th:existen_discrete}.

Let us begin our studies by redefining the spaces $\mathcal{X}$ and $\mathcal{Y}$ as follows: $\mathcal{X}=\mathbf{H}_0^1(\mathsf{d}_{\mathbf{z}}^{\alpha},\Omega)\times L^2(\mathsf{d}_{\mathbf{z}}^{\alpha},\Omega)/\mathbb{R}$ and $\mathcal{Y}=\mathbf{H}_0^1(\mathsf{d}_{\mathbf{z}}^{-\alpha},\Omega)\times L^2(\mathsf{d}_{\mathbf{z}}^{-\alpha},\Omega)/\mathbb{R}$. We define the velocity error $\mathbf{e}_\mathbf{u}$ and the pressure error $e_{\mathsf{p}}$ as
\begin{equation}
\mathbf{e}_\mathbf{u}:=\mathbf{u}-\mathbf{u}_{\T}\in \mathbf{H}_0^1(\mathsf{d}_{\mathbf{z}}^{\alpha},\Omega),\qquad e_{\mathsf{p}}:=\mathsf{p}-\mathsf{p}_{\T}\in L^2(\mathsf{d}_{\mathbf{z}}^{\alpha},\Omega)/\mathbb{R}.
\end{equation}

\subsection{Ritz projection} 
As an instrumental step to perform a global reliability analysis, we study a suitable Ritz projection $(\mathbf{\Phi},\psi)$ of the residuals. $(\mathbf{\Phi},\psi)$ is defined as the solution to the system: Find $(\mathbf{\Phi},\psi)\in \mathcal{X}$ such that
\begin{equation}\label{eq:ritz}
\begin{array}{rcl}
(\nabla\mathbf{\Phi},\nabla \mathbf{v})_{\mathbf{L}^2(\Omega)}&=&a(\mathbf{e}_{\mathbf{u}},\mathbf{v})\!+\!b_{-}(\mathbf{v},e_{\mathsf{p}})\!+\!\Theta(\mathbf{u},\mathbf{u}_{\T};\mathbf{v})
\quad
\forall \mathbf{v} \in \mathbf{H}_0^1(\mathsf{d}_{\mathbf{z}}^{-\alpha},\Omega),
\\
(\psi,\mathsf{q})_{L^2(\Omega)}&=&b_{+}(\mathbf{e}_{\mathbf{u}},\mathsf{q})
\quad
\forall \mathsf{q} \in L^2(\mathsf{d}_{\mathbf{z}}^{-\alpha},\Omega)/\mathbb{R}.
\end{array}
\end{equation}
Here, $\Theta(\mathbf{u},\mathbf{u}_{\T};\mathbf{v})$ is defined as in \eqref{eq:Theta}.

As proved in the next result, the problem \eqref{eq:ritz} is well-posed.
%

\begin{theorem}[existence and uniqueness of the Ritz projection] 
There exists a unique solution $(\mathbf{\Phi},\psi)\in \mathcal{X}$ to the system \eqref{eq:ritz}. Moreover, the following estimate holds:
\begin{multline}\label{eq:ritz_estimate}
\|\nabla\mathbf{\Phi}\|_{\mathbf{L}^2(\mathsf{d}_{\mathbf{z}}^{\alpha},\Omega)}+\|\psi\|_{L^2(\mathsf{d}_{\mathbf{z}}^{\alpha},\Omega)}
\lesssim \|\nabla\mathbf{e}_{\mathbf{u}}\|_{\mathbf{L}^2(\mathsf{d}_{\mathbf{z}}^{\alpha},\Omega)}+\|\mathbf{e}_{\mathsf{p}}\|_{L^2(\mathsf{d}_{\mathbf{z}}^{\alpha},\Omega)}
\\
+\|\nabla\mathbf{e}_{\mathbf{u}}\|_{\mathbf{L}^2(\mathsf{d}_{\mathbf{z}}^{\alpha},\Omega)}\left(\|\nabla\mathbf{u}\|_{\mathbf{L}^2(\mathsf{d}_{\mathbf{z}}^{\alpha},\Omega)}+\|\nabla\mathbf{u}_{\T}\|_{\mathbf{L}^2(\mathsf{d}_{\mathbf{z}}^{\alpha},\Omega)}\right),
\end{multline}
with a hidden constant independent of $(\mathbf{\Phi},\psi)$, $(\mathbf{u},\mathsf{p})$, and $(\mathbf{u}_{\T},\mathsf{p}_{\T})$. 
\end{theorem}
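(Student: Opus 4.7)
The plan is to exploit the fact that the system \eqref{eq:ritz} decouples completely: the first equation is a vector-valued weighted Poisson problem for $\mathbf{\Phi}$, while the second is a weighted $L^2$ Riesz-type identity for $\psi$. I would handle the two subproblems independently and then add the resulting estimates to obtain \eqref{eq:ritz_estimate}.

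For $\mathbf{\Phi}$, the key observation is the identity $(\nabla\mathbf{\Phi},\nabla\mathbf{v})_{\mathbf{L}^2(\Omega)}=a_0(\mathbf{\Phi},\mathbf{v})$, which places the first equation of \eqref{eq:ritz} in the setting of the inf-sup condition \eqref{eq:infsup_a0} on the pair $\mathbf{H}_0^1(\mathsf{d}_{\mathbf{z}}^{\alpha},\Omega)\times\mathbf{H}_0^1(\mathsf{d}_{\mathbf{z}}^{-\alpha},\Omega)$; the condition applies since $\mathsf{d}_{\mathbf{z}}^{\alpha}\in A_2(\Omega)$ and $\mathsf{d}_{\mathbf{z}}^{-\alpha}\in A_2$. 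It then suffices to show that the right-hand side defines a bounded linear functional on $\mathbf{H}_0^1(\mathsf{d}_{\mathbf{z}}^{-\alpha},\Omega)$ whose norm is controlled by the right-hand side of \eqref{eq:ritz_estimate}. To this end, I would bound $a_0(\mathbf{e}_{\mathbf{u}},\mathbf{v})$ and $b_{-}(\mathbf{v},e_{\mathsf{p}})$ by a weighted Cauchy--Schwarz step, giving $\|\nabla\mathbf{e}_{\mathbf{u}}\|_{\mathbf{L}^2(\mathsf{d}_{\mathbf{z}}^{\alpha},\Omega)}\|\nabla\mathbf{v}\|_{\mathbf{L}^2(\mathsf{d}_{\mathbf{z}}^{-\alpha},\Omega)}$ and $\|e_{\mathsf{p}}\|_{L^2(\mathsf{d}_{\mathbf{z}}^{\alpha},\Omega)}\|\nabla\mathbf{v}\|_{\mathbf{L}^2(\mathsf{d}_{\mathbf{z}}^{-\alpha},\Omega)}$ respectively, and handle the contribution of $a_1$ by absorbing the $\mathbf{L}^2$ norms into gradient norms through the weighted Poincar\'e inequality \cite{MR643158}. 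For the nonlinear contribution, I would quote directly the already-established bound for $\Theta$ displayed just before Theorem \ref{thm:quasi_best}, which produces the factor $(\|\nabla\mathbf{u}\|_{\mathbf{L}^2(\mathsf{d}_{\mathbf{z}}^{\alpha},\Omega)}+\|\nabla\mathbf{u}_{\T}\|_{\mathbf{L}^2(\mathsf{d}_{\mathbf{z}}^{\alpha},\Omega)})\|\nabla\mathbf{e}_{\mathbf{u}}\|_{\mathbf{L}^2(\mathsf{d}_{\mathbf{z}}^{\alpha},\Omega)}\|\nabla\mathbf{v}\|_{\mathbf{L}^2(\mathsf{d}_{\mathbf{z}}^{-\alpha},\Omega)}$. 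Combining these three estimates with \eqref{eq:infsup_a0} delivers the existence, uniqueness, and $\mathbf{H}_0^1(\mathsf{d}_{\mathbf{z}}^{\alpha},\Omega)$-bound for $\mathbf{\Phi}$.

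For $\psi$, I would use that the standard $L^2(\Omega)$-pairing realizes the duality $(L^2(\mathsf{d}_{\mathbf{z}}^{-\alpha},\Omega))'\simeq L^2(\mathsf{d}_{\mathbf{z}}^{\alpha},\Omega)$ (via a weighted H\"older inequality between the $A_2$-weight and its reciprocal). The functional $\mathsf{q}\mapsto b_{+}(\mathbf{e}_{\mathbf{u}},\mathsf{q})$ is well-defined on the quotient space $L^2(\mathsf{d}_{\mathbf{z}}^{-\alpha},\Omega)/\mathbb{R}$, since $\mathbf{e}_{\mathbf{u}}\in\mathbf{H}_0^1(\mathsf{d}_{\mathbf{z}}^{\alpha},\Omega)$ forces $\int_\Omega\textnormal{div}\,\mathbf{e}_{\mathbf{u}}=0$, and it is bounded by $\|\nabla\mathbf{e}_{\mathbf{u}}\|_{\mathbf{L}^2(\mathsf{d}_{\mathbf{z}}^{\alpha},\Omega)}\|\mathsf{q}\|_{L^2(\mathsf{d}_{\mathbf{z}}^{-\alpha},\Omega)}$. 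A Riesz representation argument on the Hilbert space $L^2(\mathsf{d}_{\mathbf{z}}^{-\alpha},\Omega)/\mathbb{R}$, read through the above duality, then yields a unique $\psi\in L^2(\mathsf{d}_{\mathbf{z}}^{\alpha},\Omega)/\mathbb{R}$ satisfying $\|\psi\|_{L^2(\mathsf{d}_{\mathbf{z}}^{\alpha},\Omega)}\lesssim\|\nabla\mathbf{e}_{\mathbf{u}}\|_{\mathbf{L}^2(\mathsf{d}_{\mathbf{z}}^{\alpha},\Omega)}$. Adding this bound to the one for $\mathbf{\Phi}$ produces \eqref{eq:ritz_estimate}.

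I do not expect any genuine analytical obstacle: the whole proof is a clean assembly of the weighted inf-sup \eqref{eq:infsup_a0}, weighted Poincar\'e, the already-derived bound on $\Theta$, and weighted $L^2$ duality. The main point that requires care is to keep the weighted and unweighted pairings aligned: the inner products appearing on the left of \eqref{eq:ritz} are unweighted, whereas the underlying spaces $\mathcal{X}$ and $\mathcal{Y}$ are weighted, and it is precisely the unweighted $L^2(\Omega)$-pairing that places $L^2(\mathsf{d}_{\mathbf{z}}^{\alpha},\Omega)$ in duality with $L^2(\mathsf{d}_{\mathbf{z}}^{-\alpha},\Omega)$ and that makes $a_0$ the correct bilinear form for invoking \eqref{eq:infsup_a0}.
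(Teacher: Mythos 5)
Your proposal is correct and follows essentially the same route as the paper: the paper also decouples the system, packages $a(\mathbf{e}_{\mathbf{u}},\cdot)+b_{-}(\cdot,e_{\mathsf{p}})+\Theta(\mathbf{u},\mathbf{u}_{\T};\cdot)$ as a bounded functional on $\mathbf{H}_0^1(\mathsf{d}_{\mathbf{z}}^{-\alpha},\Omega)$ (using the same $\Theta$-bound from Lemma \ref{eq:lemma01} and the embedding constant $C_{2\to 2}$ for the zeroth-order term), invokes the weighted Poisson well-posedness of \cite{MR3906341} — i.e., \eqref{eq:infsup_a0} — for $\mathbf{\Phi}$, and treats $\psi$ by weighted $L^2$ duality with the bound $\|\psi\|_{L^2(\mathsf{d}_{\mathbf{z}}^{\alpha},\Omega)}\lesssim\|\operatorname{div}\mathbf{e}_{\mathbf{u}}\|_{L^2(\mathsf{d}_{\mathbf{z}}^{\alpha},\Omega)}$. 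No gaps.
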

\begin{proof}
We start with the introduction of the linear functional $\mathfrak{G}$ as follows:
\begin{eqnarray}\label{eq:operator_A}
\mathfrak{G}:\mathbf{H}_0^1(\mathsf{d}_{\mathbf{z}}^{-\alpha},\Omega) \to\mathbb{R},
\qquad
\mathfrak{G}(\mathbf{v}):=a(\mathbf{e}_{\mathbf{u}},\mathbf{v})+b_{-}(\mathbf{v},e_{\mathsf{p}})+\Theta(\mathbf{u},\mathbf{u}_{\T};\mathbf{v}).
\end{eqnarray}
Let us show that $\mathfrak{G}$ belongs to $\mathbf{H}_0^1(\mathsf{d}_{\mathbf{z}}^{-\alpha},\Omega)'$. To accomplish this task, we first control the nonlinear term $\Theta(\mathbf{u},\mathbf{u}_{\T};\cdot)$ defined in \eqref{eq:Theta}. Owing to the estimates of Lemma \ref{eq:lemma01}, we obtain the bound
\begin{equation}
\| \Theta(\mathbf{u},\mathbf{u}_{\T};\cdot)\|_{\mathbf{H}_0^1(\mathsf{d}_{\mathbf{z}}^{-\alpha},\Omega)'} \leq C_{4\to 2}^2(1+C_{2\to 2}) \|\nabla\mathbf{e}_{\mathbf{u}}\|_{\mathbf{L}^2(\mathsf{d}_{\mathbf{z}}^{\alpha},\Omega)}\Lambda(\mathbf{u},\mathbf{u}_{\T}),
\label{eq:estimate_Theta}
 \end{equation}
where $\Lambda(\mathbf{u},\mathbf{u}_{\T}):= \|\nabla\mathbf{u}\|_{\mathbf{L}^2(\mathsf{d}_{\mathbf{z}}^{\alpha},\Omega)}+ \|\nabla\mathbf{u}_{\T}\|_{\mathbf{L}^2(\mathsf{d}_{\mathbf{z}}^{\alpha},\Omega)}$. Consequently,
\begin{multline*}
\|\mathfrak{G}\|_{\mathbf{H}_0^1(\mathsf{d}_{\mathbf{z}}^{-\alpha},\Omega)'}
\leq 
(1+C_{2\to 2})\| \nabla \mathbf{e}_{\mathbf{u}}\|_{\mathbf{L}^2(\mathsf{d}_{\mathbf{z}}^{\alpha},\Omega)} 
+ 
\|e_{\mathsf{p}}\|_{L^2(\mathsf{d}_{\mathbf{z}}^{\alpha},\Omega)}
\\
+
C_{4\to 2}^2(1+C_{2\to 2}) \|\nabla\mathbf{e}_{\mathbf{u}}\|_{\mathbf{L}^2(\mathsf{d}_{\mathbf{z}}^{\alpha},\Omega)} \Lambda(\mathbf{u},\mathbf{u}_{\T}) =: \Gamma(\mathbf{u},\mathbf{u}_{\T},p).
\end{multline*}
Since $\mathsf{d}_{\mathbf{z}}^{\alpha}\in A_2(\Omega)$ and $\mathfrak{G}\in (\mathbf{H}_0^1(\mathsf{d}_{\mathbf{z}}^{-\alpha},\Omega))'$, we can use the results of \cite{MR3906341} to derive the existence of a unique $\mathbf{\Phi}\in \mathbf{H}_0^1(\mathsf{d}_{\mathbf{z}}^{\alpha},\Omega)$ that satisfies the first equation of the problem \eqref{eq:ritz} and the estimate
\begin{equation}\label{eq:ritz_help01}
\|\nabla\mathbf{\Phi}\|_{\mathbf{L}^2(\mathsf{d}_{\mathbf{z}}^{\alpha},\Omega)} \lesssim \Gamma(\mathbf{u},\mathbf{u}_{\T},p).
\end{equation}
Finally, since $\mathbf{e}_{\mathbf{u}}\in \mathbf{H}_0^1(\mathsf{d}_{\mathbf{z}}^{\alpha},\Omega)$, $b_{+}(\mathbf{e}_{\mathbf{u}},\cdot)$ defines a linear and continuous functional in the space $L^2(\mathsf{d}_{\mathbf{z}}^{-\alpha},\Omega)/\mathbb{R}$. As a consequence, we deduce the existence and uniqueness of $\psi\in L^2(\mathsf{d}_{\mathbf{z}}^{\alpha},\Omega)/\mathbb{R}$ satisfying the second equation in \eqref{eq:ritz} and the estimate
\begin{eqnarray}\label{eq:ritz_help02}
\|\psi\|_{L^2(\mathsf{d}_{\mathbf{z}}^{\alpha},\Omega)}\lesssim \|\text{div }\mathbf{e}_{\mathbf{u}}\|_{\mathbf{L}^2(\mathsf{d}_{\mathbf{z}}^{\alpha},\Omega)}.
\end{eqnarray}
Thus, we can derive the desired estimate \eqref{eq:ritz_estimate} by collecting the bounds \eqref{eq:ritz_help01} and \eqref{eq:ritz_help02}. This concludes the proof.
\end{proof}

\subsection{An upper bound for the error}
In this section, we derive an upper bound for the energy norm of the error in terms of the energy norm of the Ritz projection.

Let us begin the analysis by introducing the map $\mathfrak{F}:\mathbf{H}_0^1(\mathsf{d}_{\mathbf{z}}^{-\alpha},\Omega)\to\mathbb{R}$ as
\begin{eqnarray}\label{eq:operat_F}
\mathfrak{F}(\mathbf{v}):=
(\nabla\mathbf{\Phi},\nabla \mathbf{v})_{\mathbf{L}^2(\Omega)}
-
\Theta(\mathbf{u},\mathbf{u}_{\T};\mathbf{v}),
\end{eqnarray}
where $\Theta(\mathbf{u},\mathbf{u}_{\T};\mathbf{v})$ is defined in \eqref{eq:Theta}. It is clear that the map $\mathfrak{F}$ is linear; here, $\mathbf{u}$ and $\mathbf{u}_{\T}$ are given. In addition, in view of \eqref{eq:estimate_Theta}, $\mathfrak{F}$ satisfies the estimate
\begin{equation}\label{eq:est_F}
\|\mathfrak{F}\|_{\mathbf{H}_0^1(\mathsf{d}_{\mathbf{z}}^{-\alpha},\Omega)'} \leq  \|\nabla \mathbf{\Phi}\|_{\mathbf{L}^2(\mathsf{d}_{\mathbf{z}}^{\alpha},\Omega)}
+
C_{4\to 2}^2(1+C_{2\to 2}) \|\nabla\mathbf{e}_{\mathbf{u}}\|_{\mathbf{L}^2(\mathsf{d}_{\mathbf{z}}^{\alpha},\Omega)} \Lambda(\mathbf{u},\mathbf{u}_{\T}),
\end{equation}
where $\Lambda(\mathbf{u},\mathbf{u}_{\T}):= \|\nabla\mathbf{u}\|_{\mathbf{L}^2(\mathsf{d}_{\mathbf{z}}^{\alpha},\Omega)}+ \|\nabla\mathbf{u}_{\T}\|_{\mathbf{L}^2(\mathsf{d}_{\mathbf{z}}^{\alpha},\Omega)}$.

Having introduced the linear map $\mathfrak{F}$, we note that, given the equations in problem \eqref{eq:ritz}, the pair $(\mathbf{e}_{\mathbf{u}},e_{\mathsf{p}})$ can be considered as a solution to the following problem: Find $(\mathbf{e}_{\mathbf{u}},e_{\mathsf{p}}) \in \mathcal{X}$ such that, for every $\mathbf{v} \in \mathbf{H}_0^1(\mathsf{d}_{\mathbf{z}}^{-\alpha},\Omega)$ and $\mathsf{q} \in L^2(\mathsf{d}_{\mathbf{z}}^{-\alpha},\Omega)/\mathbb{R}$,
\begin{equation}\label{eq:probl_aux}
a(\mathbf{e}_{\mathbf{u}},\mathbf{v})+b_{-}(\mathbf{v},e_{\mathsf{p}})=\mathfrak{F}(\mathbf{v}),
\qquad b_{+}(\mathbf{e}_{\mathbf{u}},\mathsf{q})=(\psi,\mathsf{q})_{L^2(\Omega)}.
\end{equation}

With all these ingredients at hand, we present the following result.

\begin{proposition}[upper bound for the error] 
Let $\mathbf{F} \in \mathbb{R}^2$ be such that 
\begin{eqnarray}\label{eq:small_cond}
1-C_{\mathcal{B}}C_{4\to 2}^2(1+C_{2\to 2})(\|\nabla\mathbf{u}\|_{\mathbf{L}^2(\mathsf{d}_{\mathbf{z}}^{\alpha},\Omega)}+\|\nabla\mathbf{u}_{\T}\|_{\mathbf{L}^2(\mathsf{d}_{\mathbf{z}}^{\alpha},\Omega)})\geq \lambda > 0,
\end{eqnarray}
where $\lambda < 1$. Then, the following upper bound for the error $(\mathbf{e}_{\mathbf{u}},e_{\mathsf{p}})$ holds:
\begin{equation}\label{eq:upper_bound_ritz}
\|\nabla\mathbf{e}_{\mathbf{u}}\|_{\mathbf{L}^2(\mathsf{d}_{\mathbf{z}}^{\alpha},\Omega)}+\|e_{\mathsf{p}}\|_{L^2(\mathsf{d}_{\mathbf{z}}^{\alpha},\Omega)}\lesssim \|\nabla\mathbf{\Phi}\|_{\mathbf{L}^2(\mathsf{d}_{\mathbf{z}}^{\alpha},\Omega)}+\|\psi\|_{L^2(\mathsf{d}_{\mathbf{z}}^{\alpha},\Omega)},
\end{equation}
with a hidden constant independent of $(\mathbf{u},\mathsf{p})$, $(\mathbf{u}_\T,\mathsf{p}_\T)$, and $(\mathbf{\Phi},\psi)$.
\end{proposition}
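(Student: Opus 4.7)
The plan is to recognize problem \eqref{eq:probl_aux} as a Brinkman system for the error pair $(\mathbf{e}_{\mathbf{u}}, e_{\mathsf{p}}) \in \mathcal{X}$ with data $\mathfrak{F} \in \mathbf{H}_0^1(\mathsf{d}_{\mathbf{z}}^{-\alpha},\Omega)'$ for the momentum equation and $\psi \in L^2(\mathsf{d}_{\mathbf{z}}^{\alpha},\Omega)/\mathbb{R}$ for the divergence constraint. Since $\mathsf{d}_{\mathbf{z}}^{\alpha} \in A_2(\Omega)$ as noted at the beginning of Section \ref{sec:a_posteriori_anal}, Theorem \ref{eq:theorem_brinkman} applies and yields the a priori estimate
\begin{equation*}
\|\nabla\mathbf{e}_{\mathbf{u}}\|_{\mathbf{L}^2(\mathsf{d}_{\mathbf{z}}^{\alpha},\Omega)}+\|e_{\mathsf{p}}\|_{L^2(\mathsf{d}_{\mathbf{z}}^{\alpha},\Omega)} \leq C_{\mathcal{B}}\bigl(\|\mathfrak{F}\|_{\mathbf{H}_0^1(\mathsf{d}_{\mathbf{z}}^{-\alpha},\Omega)'} + \|\psi\|_{L^2(\mathsf{d}_{\mathbf{z}}^{\alpha},\Omega)}\bigr).
\end{equation*}

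Next I would insert the already established bound \eqref{eq:est_F} for $\|\mathfrak{F}\|_{\mathbf{H}_0^1(\mathsf{d}_{\mathbf{z}}^{-\alpha},\Omega)'}$ into the right-hand side. This produces a term proportional to $\|\nabla\mathbf{\Phi}\|_{\mathbf{L}^2(\mathsf{d}_{\mathbf{z}}^{\alpha},\Omega)}$, the term $C_{\mathcal{B}}\|\psi\|_{L^2(\mathsf{d}_{\mathbf{z}}^{\alpha},\Omega)}$, plus an extra contribution of the form $C_{\mathcal{B}} C_{4\to 2}^2(1+C_{2\to 2})\Lambda(\mathbf{u},\mathbf{u}_{\T}) \|\nabla\mathbf{e}_{\mathbf{u}}\|_{\mathbf{L}^2(\mathsf{d}_{\mathbf{z}}^{\alpha},\Omega)}$ that involves $\mathbf{e}_{\mathbf{u}}$. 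Rearranging, the coefficient multiplying $\|\nabla\mathbf{e}_{\mathbf{u}}\|_{\mathbf{L}^2(\mathsf{d}_{\mathbf{z}}^{\alpha},\Omega)}$ on the left becomes exactly $1-C_{\mathcal{B}}C_{4\to 2}^2(1+C_{2\to 2})(\|\nabla\mathbf{u}\|_{\mathbf{L}^2(\mathsf{d}_{\mathbf{z}}^{\alpha},\Omega)}+\|\nabla\mathbf{u}_{\T}\|_{\mathbf{L}^2(\mathsf{d}_{\mathbf{z}}^{\alpha},\Omega)})$.

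The smallness hypothesis \eqref{eq:small_cond} guarantees that this coefficient is bounded below by $\lambda > 0$, so the $\mathbf{e}_{\mathbf{u}}$-term can be absorbed into the left-hand side. Dividing by $\lambda$ (which in particular is independent of the discretization, since $\|\nabla\mathbf{u}\|_{\mathbf{L}^2(\mathsf{d}_{\mathbf{z}}^{\alpha},\Omega)}$ and $\|\nabla\mathbf{u}_{\T}\|_{\mathbf{L}^2(\mathsf{d}_{\mathbf{z}}^{\alpha},\Omega)}$ are controlled uniformly via \eqref{eq:est_nablaU} and \eqref{eq:est_nablaU_d}) yields the desired bound \eqref{eq:upper_bound_ritz}.

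The only subtle point is justifying the direct invocation of Theorem \ref{eq:theorem_brinkman}: one must verify that $\mathfrak{F} \in \mathbf{H}_0^1(\mathsf{d}_{\mathbf{z}}^{-\alpha},\Omega)'$ and that $\psi \in L^2(\mathsf{d}_{\mathbf{z}}^{\alpha},\Omega)/\mathbb{R}$, both of which follow from the estimate \eqref{eq:est_F} and the definition of $\psi$ via \eqref{eq:ritz}, together with the boundedness of $\mathbf{e}_{\mathbf{u}}$ in $\mathbf{H}_0^1(\mathsf{d}_{\mathbf{z}}^{\alpha},\Omega)$. Aside from this verification, the proof reduces to a straightforward absorption argument enabled by the smallness condition \eqref{eq:small_cond}, which mirrors the ``small data'' mechanism already used in Theorems \ref{thm:existence_uniqueness} and \ref{thm:quasi_best}.
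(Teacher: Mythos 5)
Your proposal is correct and follows exactly the paper's argument: interpret \eqref{eq:probl_aux} as a Brinkman system for $(\mathbf{e}_{\mathbf{u}},e_{\mathsf{p}})$ with data $(\mathfrak{F},\psi)$, apply the a priori estimate of Theorem \ref{eq:theorem_brinkman}, insert the bound \eqref{eq:est_F}, and absorb the resulting $\|\nabla\mathbf{e}_{\mathbf{u}}\|_{\mathbf{L}^2(\mathsf{d}_{\mathbf{z}}^{\alpha},\Omega)}$ term via the smallness condition \eqref{eq:small_cond}. The paper's proof is the same two-line absorption argument, so no further comparison is needed.
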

\begin{proof}
Invoke the estimate in Theorem \ref{eq:theorem_brinkman} and the bound \eqref{eq:est_F} to deduce that
\begin{multline*}
\|\nabla\mathbf{e}_{\mathbf{u}}\|_{\mathbf{L}^2(\mathsf{d}_{\mathbf{z}}^{\alpha},\Omega)}+\|e_{\mathsf{p}}\|_{L^2(\mathsf{d}_{\mathbf{z}}^{\alpha},\Omega)}\leq C_{\mathcal{B}} 
\left(
\|\mathfrak{F}\|_{\mathbf{H}_0^1(\mathsf{d}_{\mathbf{z}}^{-\alpha},\Omega)'} + \|\psi\|_{L^2(\mathsf{d}_{\mathbf{z}}^{\alpha},\Omega)}
\right)
\\
\leq C_{\mathcal{B}}
\left(
\|\nabla \mathbf{\Phi}\|_{\mathbf{L}^2(\mathsf{d}_{\mathbf{z}}^{\alpha},\Omega)} 
+
C_{4\to 2}^2(1+C_{2\to 2}) \|\nabla\mathbf{e}_{\mathbf{u}}\|_{\mathbf{L}^2(\mathsf{d}_{\mathbf{z}}^{\alpha},\Omega)} \Lambda(\mathbf{u},\mathbf{u}_{\T})
+
\|\psi\|_{L^2(\mathsf{d}_{\mathbf{z}}^{\alpha},\Omega)}
\right).
\end{multline*}
With this bound at hand, we invoke the smallness assumption \eqref{eq:small_cond} to conclude.
\end{proof}

\subsection{A residual-type error estimator}
In what follows, we introduce an a posteriori error estimator for the finite element approximation \eqref{eq:model_discrete} of problem \eqref{eq:modelweak} based on the discrete pairs $(\mathbf{V}(\T),
\mathcal{P}(\T))$ defined in \eqref{ME:vel_space}--\eqref{ME:press_space} or \eqref{TH:vel_space}--\eqref{TH:press_space}. To present it, we first introduce, for $K \in \T$, the local distance 
\begin{equation}
 \label{eq:DK}
 D_K:=\max_{\mathbf{x} \in K}|\mathbf{x}-\mathbf{z}|.
\end{equation}
We thus define, for $K\in \T$ and $\gamma\in\mathscr{S}$, the \emph{element residual} $\mathcal{R}_K$ and the \emph{interelement residual} $\mathcal{J}_{\gamma}$ as
\begin{eqnarray}\label{eq:element_residual}
\mathcal{R}_K&:=& (\Delta \mathbf{u}_{\T}
-
\mathbf{u}_{\T}
-
(\mathbf{u}_{\T}\cdot \nabla)\mathbf{u}_{\T}
-
\mathbf{u}_{\T}\text{div }\mathbf{u}_{\T}
-
|\mathbf{u}_{\T}|\mathbf{u}_{\T}
-
\nabla\mathsf{p}_{\T})|_K. 
\\ 
\label{eq:interelement_residual}
\mathcal{J}_{\gamma}
&:=& 
\llbracket(\nabla \mathbf{u}_{\T}-\mathsf{p}_{\T}\mathbf{I})\cdot \boldsymbol{\nu}\rrbracket,
\end{eqnarray}
where $(\mathbf{u}_{\T},\mathsf{p}_{\T})$ denotes the solution to the discrete problem \eqref{eq:model_discrete} and $\mathbf{I}\in\mathbb{R}^{2\times 2}$ denotes the identity matrix. The jump $\llbracket(\nabla \mathbf{u}_{\T}-\mathsf{p}_{\T}\mathbf{I})\cdot \boldsymbol{\nu}\rrbracket$ of the discrete tensor valued function $\nabla \mathbf{u}_{\T}-\mathsf{p}_{\T}\mathbf{I}$ is defined as in \eqref{eq:jump}.
For $K\in\T$ and $\alpha\in(0,2)$, we define the \emph{element error indicator} 
\begin{multline}\label{eq:indicator_e}
\displaystyle\mathcal{E}_{\alpha}(\mathbf{u}_{\T},\mathsf{p}_{\T};K):= \left( 
h_K^2 D_K^{\alpha} \|\mathcal{R}_K\|_{\mathbf{L}^2(K)}^2
+
\|\text{div }\mathbf{u}_{\T}\|_{L^2(\mathsf{d}_{\mathbf{z}}^{\alpha},K)}^2
\right.
\\
\left.
+
h_K D_K^{\alpha} \|\mathcal{J}_{\gamma} \|_{\mathbf{L}^2(\partial K\setminus\partial \Omega)}^2
+
h_K^{\alpha}|\mathbf{F}|^2 \# (\{\mathbf{z}\} \cap  K)
\right)^{\frac{1}{2}}.
\end{multline}
By $\#(E)$ we understand the cardinality of the set $E$. We note that in \eqref{eq:indicator_e} we consider our elements $K$ to be closed sets. We define the a posteriori \emph{error estimator} as
\begin{eqnarray}\label{eq:estimator_e}
\mathcal{E}_{\alpha}(\mathbf{u}_{\T},\mathsf{p}_{\T};\T):= \left(\sum_{K\in\T}\mathcal{E}_{\alpha}^2(\mathbf{u}_{\T},\mathsf{p}_{\T};K)\right)^{\frac{1}{2}}.
\end{eqnarray}

\subsection{A quasi-interpolation operator}
\label{sec:quasi_interpolation}
To derive a posteriori error estimates, we will use the quasi-interpolation operator $\Pi_{\T}:\mathbf{L}^1(\Omega)\to \mathbf{V}(\T)$ introduced in \cite{MR3439216}. In particular, the following properties of $\Pi_{\T}$ will be relevant to our analysis \cite{MR3892359,MR3439216}.

\begin{proposition}[stability and interpolation estimates for the operator $\Pi_{\T}$]
Let $K\in\T$ and $\alpha\in(-2,2)$. Then, for every $\mathbf{v}\in \mathbf{H}^1(\mathsf{d}_{\mathbf{z}}^{\pm\alpha},\mathcal{N}_K^*)$, we have
\begin{align}
\label{eq:Pi_01}
\|\nabla \Pi_{\T}\mathbf{v}\|_{\mathbf{L}^2(\mathsf{d}_{\mathbf{z}}^{\pm\alpha},K)} &\lesssim  \|\nabla\mathbf{v}\|_{\mathbf{L}^2(\mathsf{d}_\mathbf{z}^{\pm\alpha},\mathcal{N}_K^*)},
\\
\label{eq:Pi_02}
\|\mathbf{v}- \Pi_{\T}\mathbf{v}\|_{\mathbf{L}^2(\mathsf{d}_\mathbf{z}^{\pm\alpha},K)} &\lesssim  h_{K} \|\nabla\mathbf{v}\|_{\mathbf{L}^2(\mathsf{d}_\mathbf{z}^{\pm\alpha},\mathcal{N}_K^*)}.
\end{align}
Moreover, if $\alpha\in(0,2)$, then
\begin{align}\label{eq:Pi_03}
\|\mathbf{v}- \Pi_{\T}\mathbf{v}\|_{\mathbf{L}^2(K)}&\lesssim h_{K}D_K^{\frac{\alpha}{2}} \|\nabla\mathbf{v}\|_{\mathbf{L}^2(\mathsf{d}_\mathbf{z}^{-\alpha},\mathcal{N}_K^*)}.
\end{align}
The hidden constants in both estimates are independent of $\mathbf{v}$, $K$, and $\mathscr{T}$.
\end{proposition}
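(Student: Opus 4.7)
My plan is to derive the three estimates by reducing to the weighted quasi-interpolation theory of \cite{MR3439216,MR3892359}, exploiting that the power weights $\mathsf{d}_{\mathbf{z}}^{\pm\alpha}$ lie in $A_{2}$ whenever $\alpha\in(-2,2)$ with $A_{2}$-characteristic depending only on $\alpha$ (translation-invariance in $\mathbf{z}$). Recall that $\Pi_{\T}$ is built elementwise via averaging against a Scott--Zhang-type dual basis supported on the patch $\mathcal{N}_K^{*}$, is polynomial-preserving of the appropriate degree, and is $\mathbf{L}^{1}(\mathcal{N}_K^{*})\to\mathbf{L}^{\infty}(K)$ bounded with a constant depending only on shape regularity.

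For \eqref{eq:Pi_01} and \eqref{eq:Pi_02} I would follow the standard Cl\'ement/Scott--Zhang template. Writing $\mathbf{v}-\Pi_{\T}\mathbf{v}=(\mathbf{v}-\mathbf{c})-\Pi_{\T}(\mathbf{v}-\mathbf{c})$ for any constant $\mathbf{c}\in\mathbb{R}^{2}$ (by polynomial preservation), combining a local inverse estimate on $K$ with the $\mathbf{L}^{1}$-stability of $\Pi_{\T}$ on $\mathcal{N}_{K}^{*}$, and choosing $\mathbf{c}$ to be the weighted average of $\mathbf{v}$ so that a weighted Poincar\'e inequality on $\mathcal{N}_{K}^{*}$ is available (via \cite[Theorem 1.3]{MR643158}, applied with the $A_{2}$-weights $\mathsf{d}_{\mathbf{z}}^{\pm\alpha}$), one obtains both bounds. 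The scaling argument delivers the $h_{K}$ factor in \eqref{eq:Pi_02} without any interaction with the weight.

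For the mixed-weight bound \eqref{eq:Pi_03} the key observation is that $|\mathbf{x}-\mathbf{z}|\leq D_{K}$ for every $\mathbf{x}\in K$, so for $\alpha\in(0,2)$ one has the pointwise inequality $1\leq D_{K}^{\alpha}\mathsf{d}_{\mathbf{z}}^{-\alpha}(\mathbf{x})$ on $K$. Integrating gives
\begin{equation*}
\|\mathbf{v}-\Pi_{\T}\mathbf{v}\|_{\mathbf{L}^{2}(K)}^{2}
\leq
D_{K}^{\alpha}\,\|\mathbf{v}-\Pi_{\T}\mathbf{v}\|_{\mathbf{L}^{2}(\mathsf{d}_{\mathbf{z}}^{-\alpha},K)}^{2}.
\end{equation*}
Since $-\alpha\in(-2,0)\subset(-2,2)$, the weight $\mathsf{d}_{\mathbf{z}}^{-\alpha}$ is admissible in \eqref{eq:Pi_02}, which, applied with this weight, yields $\|\mathbf{v}-\Pi_{\T}\mathbf{v}\|_{\mathbf{L}^{2}(\mathsf{d}_{\mathbf{z}}^{-\alpha},K)}\lesssim h_{K}\,\|\nabla\mathbf{v}\|_{\mathbf{L}^{2}(\mathsf{d}_{\mathbf{z}}^{-\alpha},\mathcal{N}_{K}^{*})}$. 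Combining the two estimates produces \eqref{eq:Pi_03}.

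The main obstacle is the uniformity of the implicit constants with respect to both $\mathbf{z}\in\overline{\Omega}$ and $\mathscr{T}\in\mathbb{T}$ in the Cl\'ement-type step, which boils down to uniform control of the weighted Poincar\'e constants on the star patches $\mathcal{N}_{K}^{*}$. This uniformity follows from translation-invariance of $[\mathsf{d}_{\mathbf{z}}^{\pm\alpha}]_{A_{2}}$ (which depends only on $\alpha$) together with shape regularity, and is precisely what is exploited in \cite{MR3439216,MR3892359}; once this is secured the remainder of the argument is purely a scaling and covering exercise.
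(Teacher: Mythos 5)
Your argument is correct and follows the same route as the proof the paper points to (the paper itself only cites \cite[Proposition 4]{MR3892359}, whose proof rests on the Cl\'ement/Scott--Zhang template over weighted patches from \cite{MR3439216} exactly as you describe). In particular, your reduction of \eqref{eq:Pi_03} to \eqref{eq:Pi_02} via the pointwise bound $1\leq D_K^{\alpha}\,\mathsf{d}_{\mathbf{z}}^{-\alpha}(\mathbf{x})$ on $K$, valid since $|\mathbf{x}-\mathbf{z}|\leq D_K$ and $\alpha>0$, is precisely the standard device used in that reference.
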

\begin{proof}
See \cite[Proposition 4]{MR3892359}.
\end{proof}

\begin{proposition}[trace interpolation estimate for the operator $\Pi_{\T}$]
Let $K\in \T$, $\gamma \subset \mathscr{S}_K$, $\alpha\in(0,2)$, and $\mathbf{v}\in \mathbf{H}^{1}(\mathsf{d}_{\mathbf{z}}^{-\alpha},\mathcal{N}_K^*)$. Then,
\begin{equation}\label{eq:Pi_04}
\|\mathbf{v}-\Pi_{\T}\mathbf{v}\|_{\mathbf{L}^2(\gamma)}\lesssim h_K^{\frac{1}{2}}D_{K}^{\frac{\alpha}{2}}\|\nabla \mathbf{v}\|_{\mathbf{L}^2(\mathsf{d}_\mathbf{z}^{-\alpha},\mathcal{N}_K^*)},
\end{equation}
where the hidden constant is independent of $\mathbf{v}$, $K$, and the mesh $\T$.
\end{proposition}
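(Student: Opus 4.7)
The plan is to reduce the trace bound on $\gamma$ to volume bounds on $K$ via a standard scaled trace inequality, and then invoke the already-established interpolation estimates for $\Pi_{\T}$. Concretely, I would apply the scaled trace inequality
\[
\|\mathbf{w}\|_{\mathbf{L}^2(\gamma)}^2 \lesssim h_K^{-1}\|\mathbf{w}\|_{\mathbf{L}^2(K)}^2 + h_K \|\nabla \mathbf{w}\|_{\mathbf{L}^2(K)}^2
\]
to $\mathbf{w}=\mathbf{v}-\Pi_{\T}\mathbf{v}$, which on a shape-regular element holds with a constant independent of $K$ and $h_K$. This splits the task into estimating a volume $L^2$ term and a volume $H^1$-seminorm term on $K$, both in the unweighted norm.

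For the first term, I would directly invoke the interpolation estimate \eqref{eq:Pi_03}, which gives
\[
h_K^{-1}\|\mathbf{v}-\Pi_{\T}\mathbf{v}\|_{\mathbf{L}^2(K)}^2 \lesssim h_K D_K^{\alpha}\|\nabla\mathbf{v}\|_{\mathbf{L}^2(\mathsf{d}_\mathbf{z}^{-\alpha},\mathcal{N}_K^*)}^2,
\]
which is already of the desired form. For the second term, the key observation is that since $\alpha>0$, the weight satisfies $\mathsf{d}_\mathbf{z}^{\alpha}(\mathbf{x})=|\mathbf{x}-\mathbf{z}|^{\alpha}\leq D_K^{\alpha}$ on $K$ (recall \eqref{eq:DK}). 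Writing $|\nabla \mathbf{w}|^2 = |\nabla \mathbf{w}|^2\,\mathsf{d}_\mathbf{z}^{-\alpha}\mathsf{d}_\mathbf{z}^{\alpha}$, this yields the pointwise bound
\[
\|\nabla \mathbf{w}\|_{\mathbf{L}^2(K)}^2 \leq D_K^{\alpha}\|\nabla \mathbf{w}\|_{\mathbf{L}^2(\mathsf{d}_\mathbf{z}^{-\alpha},K)}^2,
\]
and then a triangle inequality together with the weighted stability \eqref{eq:Pi_01} applied to $\Pi_\T\mathbf{v}$ controls $\|\nabla(\mathbf{v}-\Pi_\T\mathbf{v})\|_{\mathbf{L}^2(\mathsf{d}_\mathbf{z}^{-\alpha},K)}$ by $\|\nabla\mathbf{v}\|_{\mathbf{L}^2(\mathsf{d}_\mathbf{z}^{-\alpha},\mathcal{N}_K^*)}$. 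Multiplying by $h_K$ yields again an $h_K D_K^{\alpha}\|\nabla\mathbf{v}\|_{\mathbf{L}^2(\mathsf{d}_\mathbf{z}^{-\alpha},\mathcal{N}_K^*)}^2$ contribution.

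Summing the two bounds and taking square roots produces the desired estimate \eqref{eq:Pi_04}. There is no real obstacle here: the only point requiring minor care is justifying the trace inequality on $K$ in the unweighted $L^2$, which is the standard shape-regular scaled trace inequality independent of the weight, and then making sure the $D_K^\alpha$ factor is pulled out before transferring to the weighted seminorm of $\nabla\mathbf{v}$ on the patch $\mathcal{N}_K^*$ via \eqref{eq:Pi_01}.
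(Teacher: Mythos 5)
Your argument is correct. The paper itself does not prove this proposition but simply cites \cite[Proposition 5]{MR3892359}; your reduction via the scaled trace inequality, combined with \eqref{eq:Pi_03} for the zeroth-order term and the pointwise bound $\mathsf{d}_{\mathbf{z}}^{\alpha}\leq D_K^{\alpha}$ on $K$ (valid since $\alpha>0$) together with \eqref{eq:Pi_01} for the gradient term, is exactly the standard route and all steps check out, including the implicit fact that $\mathbf{H}^1(\mathsf{d}_{\mathbf{z}}^{-\alpha},K)\hookrightarrow\mathbf{H}^1(K)$ so the unweighted trace inequality is legitimately applicable.
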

\begin{proof}
See \cite[Proposition 5]{MR3892359}.
\end{proof}


\subsection{Reliablity} 
Let us now derive a global reliability bound for the estimator $\mathcal{E}_{\alpha}$ defined in \eqref{eq:estimator_e}.

\begin{theorem}[global reliability]
Let $\alpha \in (0,2)$, let the pair $(\mathbf{u},\mathsf{p})$ $\in$ $\mathbf{H}_0^1(\mathsf{d}_\mathbf{z}^{\alpha},\Omega)\times L^2(\mathsf{d}_\mathbf{z}^{\alpha},\Omega)/\mathbb{R}$ be the solution to \eqref{eq:modelweak}, and let $(\mathbf{u}_{\T},\mathsf{p}_{\T})\in \mathbf{V}(\T)\times \mathcal{P}(\T)$ be the solution to the discrete system \eqref{eq:model_discrete}. If $\mathbf{F}$ is such that \eqref{eq:small_cond} holds, then
\begin{eqnarray}
\|\nabla\mathbf{e}_{\mathbf{u}}\|_{\mathbf{L}^2(\mathsf{d}_{\mathbf{z}}^{\alpha},\Omega)}+\|e_{\mathsf{p}}\|_{L^2(\mathsf{d}_{\mathbf{z}}^{\alpha},\Omega)}\lesssim \mathcal{E}_{\alpha}(\mathbf{u}_{\T},\mathsf{p}_{\T};\T),
\label{eq:reliability_estimate} 
\end{eqnarray}
with a hidden constant independent of $(\mathbf{u},\mathsf{p})$ and $(\mathbf{u}_{\T},\mathsf{p}_{\T})$, the size of the elements in the mesh $\T$, and $\# \T$.
\label{thm:globa_reliability}
\end{theorem}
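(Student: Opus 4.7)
The plan is to bound the two residual-like quantities on the right-hand side of the Ritz upper bound \eqref{eq:upper_bound_ritz} separately by the estimator $\mathcal{E}_{\alpha}$. Since \eqref{eq:upper_bound_ritz} already yields
\[
\|\nabla \mathbf{e}_{\mathbf{u}}\|_{\mathbf{L}^2(\mathsf{d}_{\mathbf{z}}^{\alpha},\Omega)} + \|e_{\mathsf{p}}\|_{L^2(\mathsf{d}_{\mathbf{z}}^{\alpha},\Omega)} \lesssim \|\nabla \mathbf{\Phi}\|_{\mathbf{L}^2(\mathsf{d}_{\mathbf{z}}^{\alpha},\Omega)} + \|\psi\|_{L^2(\mathsf{d}_{\mathbf{z}}^{\alpha},\Omega)},
\]
it suffices to control each of the two terms on the right-hand side by $\mathcal{E}_{\alpha}(\mathbf{u}_{\T},\mathsf{p}_{\T};\T)$.

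The pressure component is essentially immediate. From the second equation in \eqref{eq:ritz} and $\mathrm{div}\,\mathbf{u}=0$, the function $\psi$ is identified (modulo a constant, which vanishes in the quotient) with $\mathrm{div}\,\mathbf{u}_{\T}$, so
\[
\|\psi\|_{L^2(\mathsf{d}_{\mathbf{z}}^{\alpha},\Omega)}^{2} = \sum_{K\in\T}\|\mathrm{div}\,\mathbf{u}_{\T}\|_{L^2(\mathsf{d}_{\mathbf{z}}^{\alpha},K)}^{2},
\]
which is already contained inside $\mathcal{E}_{\alpha}^{2}$.

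For the velocity component I would first use the well-posedness of the weighted Laplace problem from \cite{MR3906341} to obtain $\|\nabla \mathbf{\Phi}\|_{\mathbf{L}^2(\mathsf{d}_{\mathbf{z}}^{\alpha},\Omega)} \lesssim \|\mathfrak{G}\|_{\mathbf{H}_0^1(\mathsf{d}_{\mathbf{z}}^{-\alpha},\Omega)'}$, with $\mathfrak{G}$ defined in \eqref{eq:operator_A}. The key algebraic manipulation is then to combine the definition of $\mathfrak{G}$, the continuous equation \eqref{eq:modelweak}, and the definition of $\Theta$ in \eqref{eq:Theta} to arrive at the residual representation
\[
\mathfrak{G}(\mathbf{v}) = \langle\mathbf{f},\mathbf{v}\rangle - a(\mathbf{u}_{\T},\mathbf{v}) - b_{-}(\mathbf{v},\mathsf{p}_{\T}) - c(\mathbf{u}_{\T},\mathbf{u}_{\T};\mathbf{v}) - d(\mathbf{u}_{\T},\mathbf{u}_{\T};\mathbf{v}).
\]
Since $\mathbf{v}_{\T}:=\Pi_{\T}\mathbf{v} \in \mathbf{V}(\T)$ and $\mathbf{V}(\T)\subset \mathbf{W}_0^{1,\infty}(\Omega)$ so that $\langle\mathbf{f},\mathbf{v}_{\T}\rangle = \mathbf{F}\cdot\mathbf{v}_{\T}(\mathbf{z})$ is well defined, the discrete Galerkin orthogonality \eqref{eq:model_discrete} lets me replace $\mathbf{v}$ by $\mathbf{v}-\mathbf{v}_{\T}$ throughout in the above. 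An elementwise integration by parts then produces the element residual $\mathcal{R}_K$ in \eqref{eq:element_residual} (noting that $\mathrm{div}(\mathbf{u}_{\T}\otimes\mathbf{u}_{\T}) = (\mathbf{u}_{\T}\cdot\nabla)\mathbf{u}_{\T} + \mathbf{u}_{\T}\,\mathrm{div}\,\mathbf{u}_{\T}$) and, after collecting inter-element contributions (the $\mathbf{u}_{\T}\otimes\mathbf{u}_{\T}$ normal trace is continuous across $\gamma$ since $\mathbf{u}_{\T}\in\mathbf{C}(\overline{\Omega})$ and thus does not contribute), the jump term $\mathcal{J}_{\gamma}$ in \eqref{eq:interelement_residual}, plus the isolated Dirac contribution $\mathbf{F}\cdot(\mathbf{v}-\Pi_{\T}\mathbf{v})(\mathbf{z})$.

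Bounding these three contributions by $\mathcal{E}_{\alpha}$ is then routine modulo one subtlety. For the volume residuals I would use Cauchy--Schwarz together with the weighted interpolation estimate \eqref{eq:Pi_03}, which produces precisely the $h_K D_K^{\alpha/2}$ scaling matching the first summand in $\mathcal{E}_{\alpha}^{2}$; for the jumps I would use the trace estimate \eqref{eq:Pi_04}, giving the $h_K^{1/2} D_K^{\alpha/2}$ factor matching the third summand in $\mathcal{E}_{\alpha}^{2}$. The finite overlap of the patches $\mathcal{N}_K^{*}$ yields, after a global Cauchy--Schwarz, the factor $\|\nabla\mathbf{v}\|_{\mathbf{L}^2(\mathsf{d}_{\mathbf{z}}^{-\alpha},\Omega)}$, which is then absorbed in taking the supremum in $\mathbf{v}$. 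The one non-routine point, which I expect to be the main obstacle, is the Dirac-mass term: I need a weighted pointwise estimate of the form
\[
\bigl|(\mathbf{v}-\Pi_{\T}\mathbf{v})(\mathbf{z})\bigr| \;\lesssim\; h_K^{\alpha/2}\,\|\nabla\mathbf{v}\|_{\mathbf{L}^2(\mathsf{d}_{\mathbf{z}}^{-\alpha},\mathcal{N}_K^{*})},
\]
where $K$ is the (closed) element containing $\mathbf{z}$, in order to match the $h_K^{\alpha}|\mathbf{F}|^{2}\#(\{\mathbf{z}\}\cap K)$ contribution in $\mathcal{E}_{\alpha}^{2}$. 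Such an estimate is delicate in two dimensions, where the unweighted embedding $H^1\hookrightarrow L^\infty$ fails, but it can be obtained on the reference element using the local continuity of $\delta_{\mathbf{z}}$ as an element of $H_0^1(\mathsf{d}_{\mathbf{z}}^{-\alpha},\Omega)'$ (which is guaranteed by $\alpha\in(0,2)$, cf.\ \cite[Lemma 7.1.3]{MR1469972} and \cite[Remark 21.19]{MR2305115}) together with the stability and local approximation properties of $\Pi_{\T}$ in \eqref{eq:Pi_01}--\eqref{eq:Pi_02}. With this estimate in hand, assembling the three contributions concludes the proof.
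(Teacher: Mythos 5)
Your proposal is correct and follows essentially the same route as the paper: the Ritz upper bound \eqref{eq:upper_bound_ritz}, the residual representation obtained from Galerkin orthogonality with $\Pi_{\T}\mathbf{v}$ and elementwise integration by parts, the weighted interpolation estimates \eqref{eq:Pi_03}--\eqref{eq:Pi_04} for the volume and jump residuals, finite overlap of patches, and the identification $\psi=\operatorname{div}\mathbf{u}_{\T}$ (the paper reaches the same bound for $\psi$ by testing with $\mathsf{d}_{\mathbf{z}}^{\alpha}\psi$). The weighted local bound for the Dirac contribution that you single out as the main obstacle is exactly the content of \cite[Theorem 4.7]{MR3264365}, which the paper invokes together with \eqref{eq:Pi_01}--\eqref{eq:Pi_02} to obtain the $|\mathbf{F}|\,h_K^{\alpha/2}$ scaling, so no new estimate needs to be proved there.
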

\begin{proof}
To provide the computable upper bound \eqref{eq:reliability_estimate}, we will utilize the fact that the energy norm of the error can be bounded in terms of the energy norm of the Ritz projection and proceed in three steps.

\emph{Step 1:} Let $\mathbf{v}\in \mathbf{H}_0^1({\mathsf{d}_{\mathbf{z}}^{-\alpha}},\Omega)$ be arbitrary. We utilize the first equation of problems \eqref{eq:ritz} and \eqref{eq:modelweak} to conclude that
\begin{multline}\label{eq:reliability_aux01}
(\nabla\mathbf{\Phi},\nabla \mathbf{v})_{\mathbf{L}^2(\Omega)}
=
\langle\mathbf{F}\delta_{\mathbf{z}},\mathbf{v}\rangle
-
\sum_{K\in\T}\int_K
\left(
\nabla \mathbf{u}_{\T}:\nabla\mathbf{v}
+
\mathbf{u}_{\T}\cdot\mathbf{v}
\right.
\\
\left.
-
\mathbf{u}_{\T} \otimes\mathbf{u}_{\T}:\nabla \mathbf{v}
+
|\mathbf{u}_{\T}|\mathbf{u}_{\T}\cdot\mathbf{v}
-
\mathsf{p}_{\T}\text{div }\mathbf{v}
\right).
\end{multline}
Applying a standard integration by parts argument, on the basis of the fact that, for $\gamma \in \mathscr{S}$, $(\llbracket (\mathbf{u}_{\T}\otimes \mathbf{u}_{\T})\boldsymbol{\nu}\rrbracket,\mathbf{v})_{\mathbf{L}^2(\gamma)}=0$, yields the identity
\begin{align}\label{eq:reliability_aux02}
(\nabla\mathbf{\Phi},\nabla \mathbf{v})_{\mathbf{L}^2(\Omega)}=\langle\mathbf{F}\delta_{\mathbf{z}},\mathbf{v}\rangle + \sum_{\gamma\in \mathscr{S}}\int_{\gamma}\mathcal{J}_{\gamma}\cdot \mathbf{v}
+\sum_{K\in\T}\int_{K} \mathcal{R}_{K}\cdot\mathbf{v}.
\end{align}
We recall that the element residual $\mathcal{R}_{K}$ and the interelement residual $\mathcal{J}_{\gamma}$ are defined as in \eqref{eq:element_residual} and \eqref{eq:interelement_residual}, respectively.

Let us now observe that, for every $(\mathbf{v}_{\T},\mathsf{q}_{\T}) \in \mathbf{V}(\T) \times \mathcal{P}(\T)$, we have
\[
 \langle\mathbf{F}\delta_{\mathbf{z}},\mathbf{v}_{\T}\rangle - a(\mathbf{u}_{\T},\mathbf{v}_{\T}) - b_{-}(\mathbf{v}_{\T},\mathsf{p}_{\T})  - c(\mathbf{u}_{\T},\mathbf{u}_{\T};\mathbf{v}_{\T}) - d(\mathbf{u}_{\T},\mathbf{u}_{\T};\mathbf{v}_{\T})= 0,
\]
which follows from rewriting the first equation in \eqref{eq:model_discrete}. Set $\mathbf{v}_{\T}=\Pi_{\T}\mathbf{v}$ into the previous relation, apply an integration by parts formula, and utilize the relation \eqref{eq:reliability_aux02} to arrive at
\begin{multline}\label{eq:reliability_aux03}
(\nabla\mathbf{\Phi},\nabla \mathbf{v})_{\mathbf{L}^2(\Omega)} =
\langle\mathbf{F}\delta_{\mathbf{z}},\mathbf{v}-\Pi_{\T}\mathbf{v}\rangle
+
\sum_{K\in\T}\int_{K} \mathcal{R}_K\cdot(\mathbf{v}-\Pi_{\T}\mathbf{v})
\\
+ \sum_{\gamma\in \mathscr{S}}\int_{\gamma}\mathcal{J}_{\gamma}\cdot (\mathbf{v}-\Pi_{\T}\mathbf{v}) =: \text{I}+\text{II}+\text{III}.
\end{multline}

In what follows, we control the terms $\text{I}$, $\text{II}$, and $\text{III}$ following the arguments developed in \cite{MR4117306}. Let us begin with the control of the term \text{I}. To accomplish this task, we invoke the local bound of \cite[Theorem 4.7]{MR3264365}, the interpolation error bound \eqref{eq:Pi_02}, and the stability estimate \eqref{eq:Pi_01} as follows: If $K \in \T$ is such that $\mathbf{z} \in K$, then
\begin{equation*}\label{eq:reliability_termI}
\begin{array}{rl}
\text{I}
&
\lesssim |\mathbf{F}|\left(h_{K}^{\frac{\alpha}{2}-1}\|\mathbf{v}-\Pi_{\T}\mathbf{v}\|_{\mathbf{L}^2(\mathsf{d}_{\mathbf{z}}^{-\alpha},K)}+h_{K}^{\frac{\alpha}{2}}\|\nabla(\mathbf{v}-\Pi_{\T}\mathbf{v})\|_{\mathbf{L}^2(\mathsf{d}_{\mathbf{z}}^{-\alpha},K)}\right)\\
&
\lesssim |\mathbf{F}|h_{K}^{\frac{\alpha}{2}}\|\nabla \mathbf{v}\|_{\mathbf{L}^2(\mathsf{d}_{\mathbf{z}}^{-\alpha},\mathcal{N}_K^*)}.	
\end{array}
\end{equation*}
To bound the terms $\text{II}$ and $\text{III}$, we invoke H\"older's inequality and the interpolation error estimates \eqref{eq:Pi_03} and \eqref{eq:Pi_04} to obtain 
\begin{equation*}\label{eq:reliability_termII-III}
\begin{array}{rl}
\text{II} 
& 
\lesssim \displaystyle\sum_{K\in\T}h_{K}D_{K}^{\frac{\alpha}{2}}\|\mathcal{R}_K\|_{\mathbf{L}^2(K)}\|\nabla\mathbf{v}\|_{\mathbf{L}^2(\mathsf{d}_{\mathbf{z}}^{-\alpha},\mathcal{N}_K^*)},
\\ 
\text{III} 
&
\lesssim \displaystyle\sum_{\gamma\in\mathscr{S}}h_{K}^{\frac{1}{2}}D_{K}^{\frac{\alpha}{2}}\|\mathcal{J}_{\gamma}\|_{\mathbf{L}^2(\gamma)}\|\nabla\mathbf{v}\|_{\mathbf{L}^2(\mathsf{d}_{\mathbf{z}}^{-\alpha},\mathcal{N}_K^*)}.
\end{array}
\end{equation*}

Having bounded the terms $\text{I}$, $\text{II}$, and $\text{III}$, we invoke the inf-sup condition \eqref{eq:infsup_a0} and the identity \eqref{eq:reliability_aux03} to obtain an estimate for $\|\nabla \mathbf{\Phi}\|_{\mathbf{L}^2(\mathsf{d}_{\mathbf{z}}^{\alpha},\Omega)}$:
\begin{multline}
\|\nabla \mathbf{\Phi}\|_{\mathbf{L}^2(\mathsf{d}_{\mathbf{z}}^{\alpha},\Omega)}^2
\lesssim
\left[
\sup_{\mathbf{0}\neq \mathbf{v}\in\mathbf{H}_0^1(\mathsf{d}_\mathbf{z}^{-\alpha},\Omega)}
\dfrac{(\nabla \mathbf{\Phi}, \nabla \mathbf{v})}{\|\nabla \mathbf{v}\|_{\mathbf{L}^2(\mathsf{d}_\mathbf{z}^{-\alpha},\Omega)}}
\right]^2
\\
\lesssim
\sum_{K\in\T}
\left(h_{K}D_{K}^{\alpha}\|\mathcal{J}_{\gamma}\|_{\mathbf{L}^2(\partial K\setminus\partial\Omega)}^2+h_{K}^2D_{K}^{\alpha}\|\mathcal{R}_K\|_{\mathbf{L}^2(K)}^2+h_K^{\alpha}|\mathbf{F}|^2\#(\{\mathbf{z}\}\cap K)\right),
\end{multline}
upon utilizing a finite overlapping property of stars, which guarantees that 
\[
 \left[ \sum_{K \in \T} \|\nabla\mathbf{v}\|^2_{\mathbf{L}^2(\mathsf{d}_{\mathbf{z}}^{-\alpha},\mathcal{N}_K^*)} \right]^{\tfrac{1}{2}} \lesssim \|\nabla\mathbf{v}\|_{\mathbf{L}^2(\mathsf{d}_{\mathbf{z}}^{-\alpha},\Omega)}.
\]
Consequently, we have $\|\nabla \mathbf{\Phi}\|_{\mathbf{L}^2(\mathsf{d}_{\mathbf{z}}^{\alpha},\Omega)}\lesssim \mathcal{E}_{\alpha}(\mathbf{u}_{\T},\mathsf{p}_{\T}; \T)$.

\emph{Step 2:} Let $\psi\in L^2(\mathsf{d}_{\mathbf{z}}^{\alpha},\Omega)$. A basic computation reveals that the function $\tilde{q} :=\mathsf{d}_{\mathbf{z}}^{\alpha}\psi\in L^2(\mathsf{d}_{\mathbf{z}}^{-\alpha},\Omega)$. Define $q=\tilde{q}+c$, where $c\in\mathbb{R}$ is such that $q\in L^2(\mathsf{d}_{\mathbf{z}}^{-\alpha},\Omega)/\mathbb{R}$. Substituting $q$ into the second equation of problem \eqref{eq:ritz}, we obtain
\begin{multline*}
\|\psi\|_{L^2(\mathsf{d}_{\mathbf{z}}^{\alpha},\Omega)}^2 
=
(\psi, q)_{L^2(\Omega)} = b_{+}(\mathbf{e}_{\mathbf{u}},q)
\\
=b_{+}(\mathbf{e}_{\mathbf{u}},\mathsf{d}_{\mathbf{z}}^{\alpha}\psi)=-b_{+}(\mathbf{u}_{\T},\mathsf{d}_{\mathbf{z}}^{\alpha}\psi)\leq \|\text{div }
\mathbf{u}_{\T}\|_{L^2(\mathsf{d}_{\mathbf{z}}^{\alpha},\Omega)}\|\psi\|_{L^2(\mathsf{d}_{\mathbf{z}}^{\alpha},\Omega)},
\end{multline*}
upon utilizing that $\int_{\Omega} \psi = 0$ and $\int_{\Omega} \text{div } \mathbf{e}_{\mathbf{u}} = 0$. We have thus obtained the estimate $\|\psi\|_{L^2(\mathsf{d}_{\mathbf{z}}^{\alpha},\Omega)}\leq \|\text{div }\mathbf{u}_{\T}\|_{L^2(\mathsf{d}_{\mathbf{z}}^{\alpha},\Omega)}$.

\emph{Step 3:} The desired estimate \eqref{eq:reliability_estimate} is obtained from \eqref{eq:upper_bound_ritz} and the estimates derived in steps 1 and 2. This completes the proof.
\end{proof}

\subsection{Local efficiency bounds}
We use classical residual estimation techniques based on the bubble functions constructed in \cite{MR3264365} to derive efficiency bounds for the local indicator $\mathcal{E}_{\alpha}(\mathbf{u}_{\T},\mathsf{p}_{\T};K)$ defined in \eqref{eq:indicator_e}.

Given $K\in\T$, we introduce an element bubble function $\varphi_K$ which satisfies the following properties: $0\leq \varphi_K \leq 1$,
\begin{eqnarray}\label{eq:varphi_01}
\varphi_K(\mathbf{z})=0,
\qquad |K|\lesssim \int_K \varphi_K,
\qquad \|\nabla\varphi_K\|_{\mathbf{L}^{\infty}(R_K)}\lesssim h_K^{-1},
\end{eqnarray}
and there exists a simplex $K^*\subset K$ such that $R_K:=\supp(\varphi_K)\subset K^*$. Notice that, since $\varphi_K$ satisfies \eqref{eq:varphi_01}, we have the bound
\begin{eqnarray}\label{eq:varphi_02}
\|\theta\|_{L^2(R_K)}\lesssim\|\varphi_K^{\frac{1}{2}}\theta\|_{L^2(R_K)}
\quad 
\forall\theta \in \mathbb{P}_5(R_K).
\end{eqnarray}

Second, given $\gamma\in \mathscr{S}$, we introduce a bubble function $\varphi_\gamma$ that satisfies the following properties: $0\leq \varphi_\gamma\leq 1$,
\begin{eqnarray}\label{eq:varphi_03}
\varphi_\gamma(\mathbf{z})=0,
\qquad |\gamma|\lesssim \int_\gamma \varphi_\gamma,
\qquad \|\nabla\varphi_\gamma\|_{\mathbf{L}^{\infty}(R_\gamma)}\lesssim h_\gamma^{-1},
\end{eqnarray}
and $R_{\gamma}:=\supp(\varphi_{\gamma})$ is such that, if $\mathcal{N}_{\gamma}=\{K,K'\}$, there are simplices $K_{*}\subset K$ and $K_{*}'\subset K'$ such that $R_{\gamma}\subset K_{*} \cup K_{*}'\subset  K\cup K'$.

The following estimates are relevant to the efficiency analysis to be performed.

\begin{proposition}[estimates for bubble functions]
Let $\alpha\in (0,2)$, $K\in\T$, and $\varphi_K$ be the function that satisfies \eqref{eq:varphi_01}. Then,
\begin{eqnarray}\label{eq:varphi_04}
h_K\|\nabla(\theta\varphi_K)\|_{\mathbf{L}^2(\mathsf{d}_{\mathbf{z}}^{-\alpha},K)}\lesssim D_K^{-\frac{\alpha}{2}}\|\theta\|_{L^2(K)} \qquad \forall\theta \in \mathbb{P}_{5}(K).
\end{eqnarray}
Let $\alpha\in (0,2)$, $\gamma\in\mathscr{S}$, and $\varphi_{\gamma}$ be the function that satisfies \eqref{eq:varphi_03}. Then
\begin{eqnarray}\label{eq:varphi_05}
h_K^{\frac{1}{2}}\|\nabla(\theta\varphi_\gamma)\|_{\mathbf{L}^2(\mathsf{d}_{\mathbf{z}}^{-\alpha},\mathcal{N}_{\gamma})}\lesssim D_K^{-\frac{\alpha}{2}}\|\theta\|_{L^2(\gamma)} \qquad \forall\theta \in \mathbb{P}_3(\gamma).
\end{eqnarray}
$\theta$ is extended to the elements in $\mathcal{N}_{\gamma}$ as a constant along the direction normal to $\gamma$.
\end{proposition}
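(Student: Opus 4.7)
The plan is to prove both bounds by expanding with the product rule and controlling each summand by the pointwise bounds on the bubble functions together with standard polynomial inverse estimates. For \eqref{eq:varphi_04}, I expand
\[
\nabla(\theta\varphi_K) = \varphi_K\nabla\theta + \theta\nabla\varphi_K,
\]
and use $|\varphi_K| \leq 1$ and $\|\nabla\varphi_K\|_{L^{\infty}(R_K)} \lesssim h_K^{-1}$ from \eqref{eq:varphi_01}. Combined with the two-dimensional polynomial inverse estimates $\|\theta\|_{L^{\infty}(K)} \lesssim h_K^{-1}\|\theta\|_{L^{2}(K)}$ and $\|\nabla\theta\|_{L^{\infty}(K)} \lesssim h_K^{-2}\|\theta\|_{L^{2}(K)}$, this yields the pointwise bound $\|\nabla(\theta\varphi_K)\|_{L^{\infty}(K)} \lesssim h_K^{-2}\|\theta\|_{L^{2}(K)}$. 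The weighted $L^2$ norm is then controlled by this $L^{\infty}$ bound times the weighted mass of $K$. Since $K \subset B(\mathbf{z},D_K)$ by definition of $D_K$, polar coordinates yield
\[
\int_K \mathsf{d}_{\mathbf{z}}^{-\alpha} \lesssim \int_0^{D_K} r^{1-\alpha}\,dr \lesssim D_K^{2-\alpha},
\]
where integrability requires $\alpha < 2$.

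The remaining point is to convert the product $h_K^{-2}D_K^{2-\alpha}$ into the advertised $D_K^{-\alpha}$, which forces a brief case distinction on the position of $\mathbf{z}$. When $D_K \lesssim h_K$---a regime that in particular contains $\mathbf{z} \in K$, since then $D_K \leq \mathrm{diam}(K) \lesssim h_K$---I simply chain the bounds above to obtain
\[
h_K^2\|\nabla(\theta\varphi_K)\|_{\mathbf{L}^2(\mathsf{d}_{\mathbf{z}}^{-\alpha},K)}^2 \lesssim h_K^{-2}D_K^{2-\alpha}\|\theta\|_{L^2(K)}^2 \lesssim D_K^{-\alpha}\|\theta\|_{L^2(K)}^2.
\]
When instead $D_K \gg h_K$, the triangle inequality gives $\mathrm{dist}(\mathbf{z},K) \geq D_K - \mathrm{diam}(K) \gtrsim D_K$, so that $\mathsf{d}_{\mathbf{z}}^{-\alpha} \lesssim D_K^{-\alpha}$ pointwise on $K$; the claim then follows by pulling the weight out and applying the standard unweighted inverse bound $\|\nabla(\theta\varphi_K)\|_{L^2(K)} \lesssim h_K^{-1}\|\theta\|_{L^2(K)}$. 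Taking square roots produces \eqref{eq:varphi_04}.

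For \eqref{eq:varphi_05}, I apply the same argument on each element $K \in \mathcal{N}_\gamma$ separately, with $\varphi_\gamma$ in place of $\varphi_K$ and the properties \eqref{eq:varphi_03} in place of \eqref{eq:varphi_01}. Because $\theta$ is extended to $\mathcal{N}_\gamma$ as a constant along the direction normal to $\gamma$, a change of variables shows that $\|\theta\|_{L^2(K)}^2 \lesssim h_K\|\theta\|_{L^2(\gamma)}^2$ on each such $K$. Substituting this into the elementwise estimate derived as in \eqref{eq:varphi_04} and peeling one factor of $h_K^{1/2}$ from both sides gives the stated bound. I do not foresee any serious obstacle: the argument is essentially bookkeeping, the two nontrivial ingredients being the uniform weighted-mass estimate $\int_K \mathsf{d}_{\mathbf{z}}^{-\alpha} \lesssim D_K^{2-\alpha}$ and the two-regime case split that forces the correct $D_K^{-\alpha/2}$ factor to emerge on the right-hand side.
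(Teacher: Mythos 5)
Your argument is correct. The paper itself gives no proof here --- it simply cites \cite[Lemma 5.2]{MR3264365} --- so what you have written is a self-contained reconstruction rather than a divergence from the paper's route; in fact the two mechanisms you isolate, the weighted mass bound $\int_K \mathsf{d}_{\mathbf{z}}^{-\alpha}\lesssim D_K^{2-\alpha}$ (valid precisely because $\alpha<2$) and the dichotomy between elements with $D_K\approx h_K$ and elements far from $\mathbf{z}$ where $\mathsf{d}_{\mathbf{z}}^{-\alpha}\lesssim D_K^{-\alpha}$ pointwise, are exactly the mechanisms of the cited lemma. Two small points are worth making explicit. First, your proof never requires $\varphi_K$ or $\varphi_\gamma$ to be polynomials --- only the $L^\infty$ bounds $0\le\varphi\le1$ and $\|\nabla\varphi\|_{\mathbf{L}^\infty}\lesssim h^{-1}$ from \eqref{eq:varphi_01} and \eqref{eq:varphi_03} together with inverse estimates for $\theta\in\mathbb{P}_5(K)$ (resp.\ the degree-$3$ polynomial extension of $\theta$ to $\mathcal{N}_\gamma$); this matters because these bubbles vanish at $\mathbf{z}$ and are supported in subsimplices, so they are not the standard polynomial bubbles, and your "standard unweighted inverse bound" for $\nabla(\theta\varphi_K)$ should be read as following from the product rule plus the inverse estimate for $\nabla\theta$, which it does. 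Second, in passing from the elementwise estimate on each $K'\in\mathcal{N}_\gamma$ to the stated bound \eqref{eq:varphi_05} you implicitly use $h_{K'}\approx h_\gamma\approx h_K$ (shape regularity) and $D_{K'}\approx D_K$ for the two elements sharing $\gamma$; the latter holds because $D_K\gtrsim h_K$ always, so it costs nothing, but it deserves a sentence. With those remarks the proof is complete.
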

\begin{proof}
See \cite[Lemma 5.2]{MR3264365}. 
\end{proof}

We are now ready to analyze efficiency bounds for the local error indicator $\mathcal{E}_{\alpha}(\mathbf{u}_{\T},\mathsf{p}_{\T};K)$ defined in \eqref{eq:indicator_e}.

\begin{theorem}[local efficiency]\label{thm:efficiency}
Let the pair $(\mathbf{u},\mathsf{p})$ $\in$ $\mathbf{H}_0^1(\mathsf{d}_{\mathbf{z}}^{\alpha},\Omega)\times L^2(\mathsf{d}_{\mathbf{z}}^{\alpha},\Omega)/\mathbb{R}$ be the solution to \eqref{eq:modelweak}, and let $(\mathbf{u}_{\T},\mathsf{p}_{\T})\in \mathbf{V}(\T)\times \mathcal{P}(\T)$ be the solution to the discrete problem \eqref{eq:model_discrete}. If $\mathbf{F}$ is such that \eqref{eq:small_cond} holds, then
\begin{multline}\label{eq:bound_efficiency}
\mathcal{E}_{\alpha}(\mathbf{u}_{\T},\mathsf{p}_{\T};K)^2
\lesssim 
\|\nabla\mathbf{e}_{\mathbf{u}}\|_{\mathbf{L}^2(\mathsf{d}_{\mathbf{z}}^{\alpha},\mathcal{N}_K)}^2+\|e_{\mathsf{p}}\|_{L^2(\mathsf{d}_{\mathbf{z}}^{\alpha},\mathcal{N}_K)}^2+h_K^{2}\| \mathbf{e}_{\mathbf{u}}\|_{\mathbf{L}^2(\mathsf{d}_{\mathbf{z}}^{\alpha},\mathcal{N}_K)}^2\\+(1+h_K^2)\| \mathbf{e}_{\mathbf{u}}\|_{\mathbf{L}^4(\mathsf{d}_{\mathbf{z}}^{\alpha},\mathcal{N}_K)}^2
+
\sum_{K'\in \mathcal{N}_K^*}h_{K'}^2D_{K'}^{\alpha}\||\mathbf{u}_{\T}|\mathbf{u}_{\T}-\mathbf{\Pi}_{K'}(|\mathbf{u}_{\T}|\mathbf{u}_{\T})\|_{\mathbf{L}^2(K')}^2.
\end{multline}
Here, $\mathbf{\Pi}_K$ corresponds to orthogonal projection operator onto $[\mathbb{P}_0(K)]^2$, $\mathcal{N}_K^*$ is defined in \eqref{eq:patch}, and the hidden constant is independent of $(\mathbf{u},\mathsf{p})$ and $(\mathbf{u}_{\T},\mathsf{p}_{\T})$, the size of the elements in the mesh $\T$, and $\# \T$. 
\end{theorem}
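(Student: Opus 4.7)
The plan is to control each of the four contributions to $\mathcal{E}_\alpha^2(\mathbf{u}_\T,\mathsf{p}_\T;K)$ separately, adapting the classical bubble-function/residual technique to the weighted setting recalled in Section \ref{sec:quasi_interpolation}. The divergence piece is immediate: since $\mathbf{u}$ is solenoidal, $\mathrm{div}\,\mathbf{u}_\T = -\mathrm{div}\,\mathbf{e}_{\mathbf{u}}$ on $K$, whence
\[
\|\mathrm{div}\,\mathbf{u}_\T\|_{L^2(\mathsf{d}_\mathbf{z}^\alpha,K)} \lesssim \|\nabla \mathbf{e}_{\mathbf{u}}\|_{\mathbf{L}^2(\mathsf{d}_\mathbf{z}^\alpha,K)},
\]
which is the cheapest piece of the desired bound.

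For the element residual, I would introduce the polynomial surrogate $\tilde{\mathcal{R}}_K := \mathcal{R}_K + |\mathbf{u}_\T|\mathbf{u}_\T - \mathbf{\Pi}_K(|\mathbf{u}_\T|\mathbf{u}_\T)$ (obtained by projecting the only non-polynomial piece of $\mathcal{R}_K$) and use the test function $\mathbf{v} := \tilde{\mathcal{R}}_K\varphi_K$, extended by zero to $\Omega\setminus R_K$. Because $\varphi_K(\mathbf{z})=0$, the Dirac forcing drops out when $\mathbf{v}$ is inserted into the difference between \eqref{eq:modelweak} and \eqref{eq:model_discrete}. Integrating by parts on $K$ yields an identity of the form
\[
\int_K |\tilde{\mathcal{R}}_K|^2 \varphi_K = a(\mathbf{e}_{\mathbf{u}},\mathbf{v}) + b_{-}(\mathbf{v},e_\mathsf{p}) + \Theta(\mathbf{u},\mathbf{u}_\T;\mathbf{v}) + \int_K \bigl(\mathbf{\Pi}_K(|\mathbf{u}_\T|\mathbf{u}_\T) - |\mathbf{u}_\T|\mathbf{u}_\T\bigr)\cdot \mathbf{v},
\]
to which I would apply the bubble-function norm equivalence \eqref{eq:varphi_02} and the weighted inverse bound \eqref{eq:varphi_04}, and handle $\Theta$ using the splitting carried out before Theorem \ref{thm:quasi_best} together with the continuous embeddings $\mathbf{H}_0^1(\mathsf{d}_\mathbf{z}^{\pm\alpha},\Omega)\hookrightarrow\mathbf{L}^4(\mathsf{d}_\mathbf{z}^{\pm\alpha},\Omega)$. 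A triangle inequality transfers the bound from $\tilde{\mathcal{R}}_K$ back to $\mathcal{R}_K$, producing the $\sum_{K'\in\mathcal{N}_K^*}h_{K'}^2 D_{K'}^\alpha\||\mathbf{u}_\T|\mathbf{u}_\T - \mathbf{\Pi}_{K'}(|\mathbf{u}_\T|\mathbf{u}_\T)\|_{\mathbf{L}^2(K')}^2$ oscillation contribution in \eqref{eq:bound_efficiency}. A parallel argument with $\mathbf{v}:=\mathcal{J}_\gamma \varphi_\gamma$ (extended as a constant in the normal direction to $\mathcal{N}_\gamma$), using the bubble estimates \eqref{eq:varphi_03} and \eqref{eq:varphi_05}, controls the interelement jump term.

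The hard part is the Dirac contribution $h_K^\alpha|\mathbf{F}|^2 \#(\{\mathbf{z}\}\cap K)$, because the bubble functions above all vanish at $\mathbf{z}$ by construction and therefore cannot detect $\mathbf{F}\delta_\mathbf{z}$. My plan here is, for the unique $K$ containing $\mathbf{z}$, to fix a cut-off $\eta_K\in C_0^\infty(K)$ with $\eta_K(\mathbf{z})=1$, $|\eta_K|\le 1$, and $|\nabla \eta_K|\lesssim h_K^{-1}$, and to test \eqref{eq:modelweak} with $\mathbf{v} := \mathbf{F}\eta_K$, which produces $|\mathbf{F}|^2$ on the right-hand side. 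Writing $\mathbf{u} = \mathbf{u}_\T + \mathbf{e}_{\mathbf{u}}$, $\mathsf{p} = \mathsf{p}_\T + e_\mathsf{p}$ and integrating by parts on the discrete part (boundary terms vanish since $\eta_K|_{\partial K}=0$) gives
\[
|\mathbf{F}|^2 = a(\mathbf{e}_{\mathbf{u}},\mathbf{F}\eta_K) + b_{-}(\mathbf{F}\eta_K,e_\mathsf{p}) + \Theta(\mathbf{u},\mathbf{u}_\T;\mathbf{F}\eta_K) - \int_K \mathcal{R}_K\cdot (\mathbf{F}\eta_K).
\]
The key scaling computations, using $D_K\approx h_K$ on the element hosting $\mathbf{z}$ and $\int_K \mathsf{d}_\mathbf{z}^{-\alpha}\lesssim h_K^{2-\alpha}$, are $\|\mathbf{F}\eta_K\|_{\mathbf{L}^2(K)}\lesssim |\mathbf{F}|h_K$ and $\|\nabla(\mathbf{F}\eta_K)\|_{\mathbf{L}^2(\mathsf{d}_\mathbf{z}^{-\alpha},K)}\lesssim |\mathbf{F}| h_K^{-\alpha/2}$; combining them with the bound for $\|\mathcal{R}_K\|_{\mathbf{L}^2(K)}$ already obtained in the preceding step, dividing by $|\mathbf{F}|$, and squaring yields $h_K^\alpha|\mathbf{F}|^2$ bounded by the right-hand side of \eqref{eq:bound_efficiency}. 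Assembling the four contributions completes the proof, with the Dirac scaling being the only step that is genuinely non-standard.
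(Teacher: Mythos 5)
Your treatment of the divergence term, the element residual (via the surrogate $\tilde{\mathcal{R}}_K$ and the interior bubble $\varphi_K\tilde{\mathcal{R}}_K$, which kills the Dirac because it vanishes at $\mathbf{z}$), and the jump term (via the edge bubble) coincides with the paper's Steps 1--3, and the overall assembly is the same. The one step that does not survive scrutiny is your construction for the Dirac contribution. You take $\eta_K\in C_0^\infty(K)$ with $\eta_K(\mathbf{z})=1$ and $|\nabla\eta_K|\lesssim h_K^{-1}$, supported in the \emph{single} element $K$. Such a function need not exist: the indicator \eqref{eq:indicator_e} treats the elements as closed sets, so $\mathbf{z}$ may lie on $\partial K$ (e.g., at a mesh vertex, which is exactly what happens in the paper's experiments), in which case any $H^1_0(\mathrm{int}\,K)$ cut-off vanishes at $\mathbf{z}$; and even when $\mathbf{z}$ is interior to $K$ but at distance $\epsilon\ll h_K$ from $\partial K$, the gradient of any admissible cut-off scales like $\epsilon^{-1}$, not $h_K^{-1}$, so the claimed bound $\|\nabla(\mathbf{F}\eta_K)\|_{\mathbf{L}^2(\mathsf{d}_{\mathbf{z}}^{-\alpha},K)}\lesssim|\mathbf{F}|h_K^{-\alpha/2}$ is not uniform over meshes.

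The paper avoids this by using the cut-off $\mu$ of \cite[Theorem 5.3]{MR3264365}, which satisfies $\mu(\mathbf{z})=1$, $\|\nabla\mu\|_{\mathbf{L}^\infty}\lesssim h_K^{-1}$, and is supported in the \emph{patch} $\mathcal{N}_K^*$, whose boundary is at distance $\simeq h_K$ from $\mathbf{z}$ by shape regularity no matter where $\mathbf{z}$ sits in $K$. This change has a consequence you would also need to incorporate: since $\mathbf{F}\mu$ does not vanish on the interior edges of $\mathcal{N}_K^*$, the elementwise integration by parts of the discrete terms produces jump contributions $\sum_{\gamma}\|\mathcal{J}_\gamma\|_{\mathbf{L}^2(\gamma)}\|\mathbf{F}\mu\|_{\mathbf{L}^2(\gamma)}$ over those edges (absent from your identity for $|\mathbf{F}|^2$), which are then controlled by the edge estimate from Step 2 using $\|\mu\|_{L^2(\gamma)}\lesssim h_K^{1/2}$. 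Your scaling computations ($\|\mathbf{F}\mu\|_{\mathbf{L}^2}\lesssim|\mathbf{F}|h_K$, $\|\nabla(\mathbf{F}\mu)\|_{\mathbf{L}^2(\mathsf{d}_{\mathbf{z}}^{-\alpha})}\lesssim|\mathbf{F}|h_K^{-\alpha/2}$, $D_K\approx h_K$) and the final division by $|\mathbf{F}|$ are otherwise exactly as in the paper; with the patch-supported cut-off and the extra jump terms the argument closes.
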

\begin{proof}
We bound each contribution in \eqref{eq:indicator_e} separately. In doing so, we proceed in five steps.

\emph{Step 1:} Let $K\in\T$. In a first step, we bound the term $h_K^{2}D_K^{\alpha}\|\mathcal{R}_K\|^2_{\mathbf{L}^2(K)}$. To accomplish this task, we define
\[
\tilde{\mathcal{R}}_K:= (\Delta \mathbf{u}_{\T}
-
\mathbf{u}_{\T}
-
(\mathbf{u}_{\T}\cdot \nabla)\mathbf{u}_{\T}
-
\mathbf{u}_{\T}\text{div }\mathbf{u}_{\T}
-
\Pi_{K}(|\mathbf{u}_{\T}|\mathbf{u}_{\T})
-
\nabla\mathsf{p}_{\T})|_K.
\]
Notice that $\tilde{\mathcal{R}}_K = \mathcal{R}_K+|\mathbf{u}_{\T}|\mathbf{u}_{\T}-\mathbf{\Pi}_{K}(|\mathbf{u}_{\T}|\mathbf{u}_{\T})$. A simple application of the triangle inequality yields a first estimate for 
$\|\mathcal{R}_K\|_{\mathbf{L}^2(K)}$:
\begin{eqnarray}\label{eq:bound_interp_Pi}
\|\mathcal{R}_K\|_{\mathbf{L}^2(K)} \leq \|\tilde{\mathcal{R}}_K\|_{\mathbf{L}^2(K)}+\|\mathbf{\Pi}_{K}(|\mathbf{u}_{\T}|\mathbf{u}_{\T})-|\mathbf{u}_{\T}|\mathbf{u}_{\T}\|_{\mathbf{L}^2(K)}.
\end{eqnarray}
It thus suffices to control $\|\tilde{\mathcal{R}}_K\|_{\mathbf{L}^2(K)}$. To do this, we define $\boldsymbol{\phi}_K := \varphi_K\tilde{\mathcal{R}}_K$, and observe that \eqref{eq:varphi_02} guarantees the bound 
\begin{eqnarray}\label{eq:bound_efficiency_01}
\|\tilde{\mathcal{R}}_K\|_{\mathbf{L}^2(K)}^2\lesssim \int_{R_K}|\tilde{\mathcal{R}}_K|^2 \varphi_K=\int_K \tilde{\mathcal{R}}_K\cdot \boldsymbol{\phi}_K.
\end{eqnarray}
Let us now utilize that $\varphi_K(\mathbf{z})=0$ to immediately deduce the relations $\boldsymbol{\phi}_K(\mathbf{z})=\varphi_K(\mathbf{z})\tilde{\mathcal{R}}_K(\mathbf{z}) = \mathbf{0}$. We thus set $\mathbf{v}=\boldsymbol{\phi}_K$ as a test function in the identity \eqref{eq:reliability_aux02} and use that $\boldsymbol{\phi}_K|_{\gamma} = \mathbf{0}$, for every $\gamma\in \mathscr{S}_K$, to obtain
\begin{eqnarray}\label{eq:Rk_tilde}
\int_K \tilde{\mathcal{R}}_K \cdot \boldsymbol{\phi}_K=(\nabla\mathbf{\Phi},\nabla \boldsymbol{\phi}_K)_{\mathbf{L}^2(\Omega)}
+
\int_K (|\mathbf{u}_{\T}|\mathbf{u}_{\T}-\mathbf{\Pi}_{K}(|\mathbf{u}_{\T}|\mathbf{u}_{\T}))\cdot \boldsymbol{\phi}_K. 
\end{eqnarray}
We now control $|(\nabla\mathbf{\Phi},\nabla \boldsymbol{\phi}_K)_{\mathbf{L}^2(K)}|$. For this purpose, we set $\mathbf{v}= \boldsymbol{\phi}_K$ as a test function in the first equation of problem \eqref{eq:ritz} and use the property $\supp \boldsymbol{\phi}_K\subset K$ and H\"older's inequality to obtain the bound
\begin{multline}\label{eq:bound_efficiency_02_EO}
|(\nabla\mathbf{\Phi},\nabla \boldsymbol{\phi}_K)_{\mathbf{L}^2(K)}|
\lesssim 
\|\nabla \mathbf{e}_{\mathbf{u}}\|_{\mathbf{L}^2(\mathsf{d}_{\mathbf{z}}^{\alpha},K)} \|\nabla\boldsymbol{\phi}_K\|_{\mathbf{L}^2(\mathsf{d}_{\mathbf{z}}^{-\alpha},K)}
\\
+ \|\mathbf{e}_{\mathbf{u}}\|_{\mathbf{L}^2(\mathsf{d}_{\mathbf{z}}^{\alpha},K)}\|\boldsymbol{\phi}_K\|_{\mathbf{L}^2(\mathsf{d}_{\mathbf{z}}^{-\alpha},K)}
+
\|e_{\mathsf{p}}\|_{L^2(\mathsf{d}_{\mathbf{z}}^{\alpha},K)}\|\nabla\boldsymbol{\phi}_K\|_{\mathbf{L}^2(\mathsf{d}_{\mathbf{z}}^{-\alpha},K)}
+
\|\mathbf{e}_{\mathbf{u}}\|_{\mathbf{L}^4(\mathsf{d}_{\mathbf{z}}^{\alpha},K)} 
\\
\cdot \left[
\|\mathbf{u}\|_{\mathbf{L}^4(\mathsf{d}_{\mathbf{z}}^{\alpha},K)} 
+ 
\|\mathbf{u}_{\T}\|_{\mathbf{L}^4(\mathsf{d}_{\mathbf{z}}^{\alpha},K)}
\right]
\left[\|\nabla\boldsymbol{\phi}_K\|_{\mathbf{L}^2(\mathsf{d}_{\mathbf{z}}^{-\alpha},K)}+\|\boldsymbol{\phi}_K\|_{\mathbf{L}^2(\mathsf{d}_{\mathbf{z}}^{-\alpha},K)}
\right].
\end{multline}
We now notice that, in view of \cite[Theorem 1.3]{MR643158}, we have the bounds: $\|\mathbf{u}\|_{\mathbf{L}^4(\mathsf{d}_{\mathbf{z}}^{\alpha},K)} \lesssim \|\nabla \mathbf{u}\|_{\mathbf{L}^2(\mathsf{d}_{\mathbf{z}}^{\alpha},\Omega)}$ and $\|\mathbf{u}_{\T}\|_{\mathbf{L}^4(\mathsf{d}_{\mathbf{z}}^{\alpha},K)} \lesssim \|\nabla \mathbf{u}_{\T}\|_{\mathbf{L}^2(\mathsf{d}_{\mathbf{z}}^{\alpha},\Omega)}$. On the other hand, the estimate \eqref{eq:varphi_04} and the estimate (5.6) in \cite{MR3264365} allow us to conclude
\[
\|\nabla\boldsymbol{\phi}_K\|_{\mathbf{L}^2(\mathsf{d}_{\mathbf{z}}^{-\alpha},K)}
\lesssim 
h_K^{-1}D_K^{-\frac{\alpha}{2}}\|\tilde{\mathcal{R}}_K\|_{\mathbf{L}^2(K)},
\qquad
\|\boldsymbol{\phi}_K\|_{\mathbf{L}^2(\mathsf{d}_{\mathbf{z}}^{-\alpha},K)}
\lesssim 
D_K^{-\frac{\alpha}{2}}\|\tilde{\mathcal{R}}_K\|_{\mathbf{L}^2(K)},
\]
respectively. Based on these bounds, the estimates \eqref{eq:bound_efficiency_01} and \eqref{eq:bound_efficiency_02_EO} in conjunction with the relation \eqref{eq:Rk_tilde} and the smallness assumption \eqref{eq:small_cond} allow us to derive the local a posteriori bound
\begin{multline}\label{eq:bound_efficiency_05_EO}
h_K^2 D_K^{\alpha}\|\tilde{\mathcal{R}}_K\|_{\mathbf{L}^2(K)}^2
\lesssim 
\|\nabla \mathbf{e}_{\mathbf{u}}\|_{\mathbf{L}^2(\mathsf{d}_{\mathbf{z}}^{\alpha},K)}^2+\|e_{\mathsf{p}}\|_{L^2(\mathsf{d}_{\mathbf{z}}^{\alpha},K)}^2
\\
+h_K^2 \| \mathbf{e}_{\mathbf{u}}\|_{\mathbf{L}^2(\mathsf{d}_{\mathbf{z}}^{\alpha},K)}^2 + (1+h_K^2)\| \mathbf{e}_{\mathbf{u}}\|_{\mathbf{L}^4(\mathsf{d}_{\mathbf{z}}^{\alpha},K)}^2+h_K^2D_K^{\alpha}\||\mathbf{u}_{\T}|\mathbf{u}_{\T}-\mathbf{\Pi}_{K}(|\mathbf{u}_{\T}|\mathbf{u}_{\T})\|_{\mathbf{L}^2(K)}^2.
\end{multline}
A collection of \eqref{eq:bound_interp_Pi} and \eqref{eq:bound_efficiency_05_EO} yields the desired estimate for
$h_K^2 D_K^{\alpha}\|\mathcal{R}_K\|_{\mathbf{L}^2(K)}^2$.

\emph{Step 2:} Let $K\in\T$ and $\gamma\in\mathscr{S}_K$. In what follows, we bound $h_K D_K^{\alpha}\|\mathcal{J}_{\gamma}\|_{\mathbf{L}^2(\gamma)}^2$. To do this, we use arguments similar to those leading to  \eqref{eq:bound_efficiency_05_EO} but now using the bubble function $\varphi_{\gamma}$. Define the function $\mathbf{\Lambda}_{\gamma}=\varphi_{\gamma}\mathcal{J}_{\gamma}$, where $\mathcal{J}_{\gamma}$ and  $\varphi_{\gamma}$ are defined in \eqref{eq:interelement_residual} and \eqref{eq:varphi_03}, respectively. We utilize the construction of the bubble function $\varphi_{\gamma}$ to deduce the bound
\begin{eqnarray}\label{eq:bound_efficiency_06}
\|\mathcal{J}_\gamma\|_{\mathbf{L}^2(\gamma)}^2\lesssim \int_{\gamma}|\mathcal{J}_\gamma|^2 \varphi_\gamma=\int_\gamma \mathcal{J}_\gamma\cdot \mathbf{\Lambda}_\gamma.
\end{eqnarray}
Now, set $\mathbf{v}=\mathbf{\Lambda}_\gamma$ in the identity \eqref{eq:reliability_aux02} and use that $\mathbf{\Lambda}_\gamma$ is such that $\mathbf{\Lambda}_\gamma(\mathbf{z})=0$ and $\supp(\mathbf{\Lambda}_\gamma)\subseteq R_\gamma=\supp(\varphi_\gamma)\subset K_*\cup K_*'\subset \cup\{K':K'\in\mathcal{N}_{\gamma}\}$ to arrive at
\begin{multline}\label{eq:bound_efficiency_07}
\int_\gamma \mathcal{J}_\gamma\cdot \mathbf{\Lambda}_\gamma=(\nabla\mathbf{\Phi},\nabla\mathbf{\Lambda}_\gamma)_{\mathbf{L}^2(\Omega)}-\sum_{K'\in\mathcal{N}_{\gamma}}\int_{K'}
\tilde{\mathcal{R}}_{K'}\cdot\mathbf{\Lambda}_{\gamma}
\\
+\sum_{K'\in\mathcal{N}_{\gamma}}\int_{K'}\left(|\mathbf{u}_{\T}|\mathbf{u}_{\T}-\mathbf{\Pi}_{K'}(|\mathbf{u}_{\T}|\mathbf{u}_{\T})\right)\cdot \mathbf{\Lambda}_{\gamma}.
\end{multline}
In view of this identity, similar arguments to those developed to obtain \eqref{eq:bound_efficiency_02_EO} yield
\begin{multline}
\int_\gamma \mathcal{J}_{\gamma}\cdot\mathbf{\Lambda}_\gamma \leq \left|(\nabla\mathbf{\Phi},\nabla\mathbf{\Lambda}_\gamma)_{\mathbf{L}^2(\mathcal{N}_{\gamma})}\right|
\\
+\sum_{K'\in\mathcal{N}_\gamma} \left( 
\|\tilde{\mathcal{R}}_{K'}\|_{\mathbf{L}^2(K')}+\||\mathbf{u}_{\T}|\mathbf{u}_{\T}-\mathbf{\Pi}_{K'}(|\mathbf{u}_{\T}|\mathbf{u}_{\T})\|_{\mathbf{L}^2(K')}
\right)
\|\mathbf{\Lambda}_\gamma\|_{\mathbf{L}^2(K')}
\\
\lesssim 
\sum_{K'\in\mathcal{N}_\gamma}
\left(\|\nabla \mathbf{e}_{\mathbf{u}}\|_{\mathbf{L}^2(\mathsf{d}_{\mathbf{z}}^{\alpha},K')}+\|e_{\mathsf{p}}\|_{L^2(\mathsf{d}_{\mathbf{z}}^{\alpha},K')}\right)\!\|\nabla\mathbf{\Lambda}_\gamma\|_{\mathbf{L}^2(\mathsf{d}_{\mathbf{z}}^{-\alpha},K')}
+
\|\mathbf{e}_{\mathbf{u}}\|_{\mathbf{L}^2(\mathsf{d}_{\mathbf{z}}^{\alpha},K')}
\\
\cdot\|\mathbf{\Lambda}_\gamma\|_{\mathbf{L}^2(\mathsf{d}_{\mathbf{z}}^{-\alpha},K')}
+
\|\mathbf{e}_{\mathbf{u}}\|_{\mathbf{L}^4(\mathsf{d}_{\mathbf{z}}^{\alpha},K')}\left(\|\nabla\mathbf{\Lambda}_\gamma\|_{\mathbf{L}^2(\mathsf{d}_{\mathbf{z}}^{-\alpha},K')}+\|\mathbf{\Lambda}_\gamma\|_{\mathbf{L}^2(\mathsf{d}_{\mathbf{z}}^{-\alpha},K')}\right)
\\
+\sum_{K'\in\mathcal{N}_\gamma} \left(
\|\tilde{\mathcal{R}}_{K'}\|_{\mathbf{L}^2(K')}+\||\mathbf{u}_{\T}|\mathbf{u}_{\T}-\mathbf{\Pi}_{K'}(|\mathbf{u}_{\T}|\mathbf{u}_{\T})\|_{\mathbf{L}^2(K')}
\right)
\|\mathbf{\Lambda}_\gamma\|_{\mathbf{L}^2(K')}.
\label{eq:eff_aux}
\end{multline}
The terms $\|\nabla\mathbf{\Lambda}_\gamma\|_{\mathbf{L}^2(\mathsf{d}_{\mathbf{z}}^{-\alpha},K')}$ and $\|\mathbf{\Lambda}_\gamma\|_{\mathbf{L}^2(\mathsf{d}_{\mathbf{z}}^{-\alpha},K')}$ can be controlled in view of \eqref{eq:varphi_05} and \cite[estimate (5.8)]{MR3264365}, respectively. In fact, we have
\begin{equation}\label{eq:bound_efficiency_13}
\|\nabla\mathbf{\Lambda}_\gamma\|_{\mathbf{L}^2(\mathsf{d}_{\mathbf{z}}^{-\alpha},K')}
\lesssim 
h_{K'}^{-\frac{1}{2}}D_{K'}^{-\frac{\alpha}{2}}\|\mathcal{J}_{\gamma}\|_{\mathbf{L}^2(\gamma)},
\,\,
\|\mathbf{\Lambda}_\gamma\|_{\mathbf{L}^2(\mathsf{d}_{\mathbf{z}}^{-\alpha},K')}
\lesssim 
h_{K'}^{\frac{1}{2}}D_{K'}^{-\frac{\alpha}{2}}\|\mathcal{J}_{\gamma}\|_{\mathbf{L}^2(\gamma)}.
\end{equation}
We also observe that
$\|\mathbf{\Lambda}_\gamma\|_{\mathbf{L}^2(K')}\approx |K'|^{\frac{1}{2}}|\gamma|^{-\frac{1}{2}}\|\mathbf{\Lambda}_\gamma\|_{\mathbf{L}^2(\gamma)}\approx h_{K'}^{\frac{1}{2}}\|\mathbf{\Lambda}_\gamma\|_{\mathbf{L}^2(\gamma)}$, as a consequence of $|K'|\approx h_{K'}^2$, $|\gamma|\approx h_{K'}$, and standard arguments. With these ingredients at hand, the inequalities in \eqref{eq:eff_aux} show that
\begin{multline*}
\int_\gamma \mathcal{J}_{\gamma}\cdot\mathbf{\Lambda}_\gamma 
\lesssim 
\sum_{K'\in\mathcal{N}_\gamma} \left(\|\nabla \mathbf{e}_{\mathbf{u}}\|_{\mathbf{L}^2(\mathsf{d}_{\mathbf{z}}^{\alpha},K')}+\|e_{\mathsf{p}}\|_{L^2(\mathsf{d}_{\mathbf{z}}^{\alpha},K')}\right)h_{K'}^{-\frac{1}{2}}D_{K'}^{-\frac{\alpha}{2}}\|\mathbf{\Lambda}_\gamma\|_{\mathbf{L}^2(\gamma)}
\\
+\sum_{K'\in\mathcal{N}_\gamma} \left[
\|\mathbf{e}_{\mathbf{u}}\|_{\mathbf{L}^2(\mathsf{d}_{\mathbf{z}}^{\alpha},K')}h_{K'}^{\frac{1}{2}}
+
\|\mathbf{e}_{\mathbf{u}}\|_{\mathbf{L}^4(\mathsf{d}_{\mathbf{z}}^{\alpha},K')}(h_{K'}^{-\frac{1}{2}}+h_{K'}^{\frac{1}{2}})
\right]
D_{K'}^{-\frac{\alpha}{2}}
\|\mathbf{\Lambda}_\gamma\|_{\mathbf{L}^2(\gamma)}
\\
+\sum_{K'\in\mathcal{N}_\gamma}h_{K'}^{\frac{1}{2}}(\|\tilde{\mathcal{R}}_{K'}\|_{\mathbf{L}^2(K')}+\||\mathbf{u}_{\T}|\mathbf{u}_{\T}-\mathbf{\Pi}_{K'}(|\mathbf{u}_{\T}|\mathbf{u}_{\T})\|_{\mathbf{L}^2(K')})\|\mathbf{\Lambda}_\gamma\|_{\mathbf{L}^2(\gamma)}.
\end{multline*}
The desired control for the term $h_K D_K^{\alpha}\|\mathcal{J}_\gamma\|_{\mathbf{L}^2(\gamma)}^2$ follows from replacing the previous estimate in \eqref{eq:bound_efficiency_06}:
\begin{multline}\label{eq:bound_efficiency_08}
h_K D_K^{\alpha}\|\mathcal{J}_\gamma\|_{\mathbf{L}^2(\gamma)}^2\lesssim	\sum_{K'\in\mathcal{N}_\gamma}
\bigg(\|\nabla \mathbf{e}_{\mathbf{u}}\|_{\mathbf{L}(\mathsf{d}_{\mathbf{z}}^{\alpha},K')}^2\!+\!\|e_{\mathsf{p}}\|_{L^2(\mathsf{d}_{\mathbf{z}}^{\alpha},K')}^2
+h_{K'}^{2}\| \mathbf{e}_{\mathbf{u}}\|_{\mathbf{L}^2(\mathsf{d}_{\mathbf{z}}^{\alpha},K')}^2
\\
+
(1+h_{K'}^2)
\| \mathbf{e}_{\mathbf{u}}\|_{\mathbf{L}^4(\mathsf{d}_{\mathbf{z}}^{\alpha},K')}^2+h_{K'}^{2}D_{K'}^{\alpha}\||\mathbf{u}_{\T}|\mathbf{u}_{\T}-\mathbf{\Pi}_{K'}(|\mathbf{u}_{\T}|\mathbf{u}_{\T})\|_{\mathbf{L}^2(K')}^2
\bigg).
\end{multline}

\emph{Step 3:} Let $K\in\T$. The control of $\|\text{div }\mathbf{u}_{\T}\|_{L^2(\mathsf{d}_{\mathbf{z}}^{\alpha},K)}$ follows easily from the mass conservation equation $\text{div }\mathbf{u} = 0$. In fact,
\begin{align}\label{eq:bound_efficiency_09}
	\|\text{div }\mathbf{u}_{\T}\|_{L^2(\mathsf{d}_{\mathbf{z}}^{\alpha},K)}=\|\text{div }\mathbf{e}_{\mathbf{u}}\|_{L^2(\mathsf{d}_{\mathbf{z}}^{\alpha},K)}\lesssim \|\nabla\mathbf{e}_{\mathbf{u}}\|_{\mathbf{L}^2(\mathsf{d}_{\mathbf{z}}^{\alpha},K)}.
\end{align}

\emph{Step 4:} Let $K\in\T$. We now control $h_K^{\alpha}|\mathbf{F}|^2 \# (\{\mathbf{z}\} \cap  K)$.
Let us first note that if $K \cap \{ \mathbf{z} \}=\emptyset$, then the desired bound \eqref{eq:bound_efficiency} follows directly from the estimates obtained in the Steps 1, 2, and 3. On the other hand, if $K \cap \{ \mathbf{z} \} = \{ \mathbf{z} \}$, we must obtain a bound for $h_K^{\alpha}|\mathbf{F}|^2$ in \eqref{eq:indicator_e}. To accomplish this task, we invoke the smooth function $\mu$ introduced in the proof of \cite[Theorem 5.3]{MR3264365}, which is such that
\begin{align}\label{eq:mu_function}
\mu(\mathbf{z})=1,
\qquad \|\mu\|_{L^{\infty}(\Omega)}=1,
\qquad \|\nabla \eta\|_{\mathbf{L}^{\infty}(\Omega)}
\lesssim 
h_K^{-1},
\qquad \supp(\mu)\subset \mathcal{N}_K^*.
\end{align}
With $\mu$ at hand, we define $\mathbf{v}_{\mu}:=\mathbf{F}\mu\in \mathbf{H}_0^1(\mathsf{d}_{\mathbf{z}}^{-\alpha},\Omega)$. Let us now invoke the fact that $(\mathbf{u},\mathsf{p})$ and $(\mathbf{\Phi},\psi)$ solve problems \eqref{eq:modelweak} and \eqref{eq:ritz}, respectively, to obtain
\begin{multline}\label{eq:bound_efficiency_10}
|\mathbf{F}|^2
=
\langle \mathbf{F}\delta_{\mathbf{z}},\mathbf{v}_{\mu}\rangle
=
a(\mathbf{u},\mathbf{v}_{\mu})
+
b_{-}(\mathbf{v}_{\mu},\mathsf{p})
+
c(\mathbf{u},\mathbf{u};\mathbf{v}_{\mu})
+
d(\mathbf{u},\mathbf{u};\mathbf{v}_{\mu})
\\
=(\nabla\mathbf{\Phi},\nabla \mathbf{v}_{\mu})_{\mathbf{L}^2(\Omega)}
+
a(\mathbf{u}_{\T},\mathbf{v}_{\mu})
+
b_{-}(\mathbf{v}_{\mu},\mathsf{p}_{\T})
+
c(\mathbf{u}_{\T},\mathbf{u}_{\T};\mathbf{v}_{\mu})
+
d(\mathbf{u}_{\T},\mathbf{u}_{\T};\mathbf{v}_{\mu}).
\end{multline}
Since $\supp(\mu)\subset \mathcal{N}_K^*$, similar arguments to the ones utilized to obtain \eqref{eq:bound_efficiency_02_EO} yields
\begin{multline}\label{eq:bound_efficiency_11}
|(\nabla\mathbf{\Phi},\nabla \mathbf{v}_{\mu})_{\mathbf{L}^2(\Omega)}|
\lesssim 
\|\mathbf{e}_{\mathbf{u}}\|_{\mathbf{L}^2(\mathsf{d}_{\mathbf{z}}^{\alpha},\mathcal{N}_K^*)}
\|\mathbf{v}_{\mu}\|_{\mathbf{L}^2(\mathsf{d}_{\mathbf{z}}^{-\alpha},\mathcal{N}_K^*)}
\\
+
\left[
\|\nabla \mathbf{e}_{\mathbf{u}}\|_{\mathbf{L}^2(\mathsf{d}_{\mathbf{z}}^{\alpha},\mathcal{N}_K^*)}+\|e_{\mathsf{p}}\|_{L^2(\mathsf{d}_{\mathbf{z}}^{\alpha},\mathcal{N}_K^*)}
\right]
\|\nabla\mathbf{v}_{\mu}\|_{\mathbf{L}^2(\mathsf{d}_{\mathbf{z}}^{-\alpha},\mathcal{N}_K^*)}
\\
\|\mathbf{e}_{\mathbf{u}}\|_{\mathbf{L}^4(\mathsf{d}_{\mathbf{z}}^{\alpha},\mathcal{N}_K^*)}
\left[
\|\nabla\mathbf{v}_{\mu}\|_{\mathbf{L}^2(\mathsf{d}_{\mathbf{z}}^{-\alpha},\mathcal{N}_K^*)}
+
\|\mathbf{v}_{\mu}\|_{\mathbf{L}^2(\mathsf{d}_{\mathbf{z}}^{-\alpha},\mathcal{N}_K^*)}
\right].
\end{multline} 
In view of the identity \eqref{eq:bound_efficiency_10}, the bound \eqref{eq:bound_efficiency_11}, and basic estimates on the basis of an integrations by parts arguments, we obtain
\begin{multline*}
|\mathbf{F}|^2\lesssim 
\left[\|\nabla \mathbf{e}_{\mathbf{u}}\|_{\mathbf{L}^2(\mathsf{d}_{\mathbf{z}}^{\alpha},\mathcal{N}_K^*)}+\|e_{\mathsf{p}}\|_{L^2(\mathsf{d}_{\mathbf{z}}^{\alpha},\mathcal{N}_K^*)}\right]
\|\nabla\mathbf{v}_{\mu}\|_{\mathbf{L}^2(\mathsf{d}_{\mathbf{z}}^{-\alpha},\mathcal{N}_K^*)}
+ \|\mathbf{e}_{\mathbf{u}}\|_{\mathbf{L}^2(\mathsf{d}_{\mathbf{z}}^{\alpha},\mathcal{N}_K^*)}
\\
\cdot \|\mathbf{v}_{\mu}\|_{\mathbf{L}^2(\mathsf{d}_{\mathbf{z}}^{-\alpha},\mathcal{N}_K^*)}
+
\|\mathbf{e}_{\mathbf{u}}\|_{\mathbf{L}^4(\mathsf{d}_{\mathbf{z}}^{\alpha},\mathcal{N}_K^*)}
\left[\|\nabla\mathbf{v}_{\mu}\|_{\mathbf{L}^2(\mathsf{d}_{\mathbf{z}}^{-\alpha},\mathcal{N}_K^*)} + \|\mathbf{v}_{\mu}\|_{\mathbf{L}^2(\mathsf{d}_{\mathbf{z}}^{-\alpha},\mathcal{N}_K^*)}\right]
\\
+\sum_{K'\in\T: K'\subset \mathcal{N}_K^*}
\left(\|\tilde{\mathcal{R}}_{K'}\|_{\mathbf{L}^2(K')}+\||\mathbf{u}_{\T}|\mathbf{u}_{\T}-\mathbf{\Pi}_{K'}(|\mathbf{u}_{\T}|\mathbf{u}_{\T})\|_{\mathbf{L}^2(K')}
\right)
\|\mathbf{v}_{\mu}\|_{\mathbf{L}^2(K')}
\\+\sum_{K'\in\T: K'\subset \mathcal{N}_K^*}\sum_{\gamma\in\mathscr{S}_{K'}: \gamma\not\subset \partial\mathcal{N}_K}\|\mathcal{J}_{\gamma}\|_{\mathbf{L}^2(\gamma)}\|\mathbf{v}_{\mu}\|_{\mathbf{L}^2(\gamma)}.
\end{multline*}
We now use the estimates
\begin{equation*}\label{eq:mu_estimates}
\|\mu\|_{L^2(\gamma)}\lesssim h_K^{\frac{1}{2}}, 
\quad
\|\mu\|_{L^2(\mathcal{N}_K^*)}\lesssim h_K,
\quad
\|\mu\|_{L^{2}(\mathsf{d}_{\mathbf{z}}^{-\alpha},\mathcal{N}_K^*)}\lesssim h_K^{1-\frac{\alpha}{2}},
\end{equation*}
and $\|\nabla\mu\|_{\mathbf{L}^{2}(\mathsf{d}_{\mathbf{z}}^{-\alpha},\mathcal{N}_K^*)}\lesssim h_K^{-\frac{\alpha}{2}}$ together with the fact that, since $\mathbf{z}\in K$, we have $h_{K}\approx D_K$, to conclude that
\begin{multline}\label{eq:bound_efficiency_12}
|\mathbf{F}|^2
\lesssim 
h_K^{-\frac{\alpha}{2}}
|\mathbf{F}| 
\bigg[\|\nabla \mathbf{e}_{\mathbf{u}}\|_{\mathbf{L}^2(\mathsf{d}_{\mathbf{z}}^{\alpha},\mathcal{N}_K^*)}^2\!+\!\|e_{\mathsf{p}}\|_{L^2(\mathsf{d}_{\mathbf{z}}^{\alpha},\mathcal{N}_K^*)}^2
+
h_K^{2} \|\mathbf{e}_{\mathbf{u}}\|_{\mathbf{L}^2(\mathsf{d}_{\mathbf{z}}^{\alpha},\mathcal{N}_K^*)}^2
\\
+
(1+h_K^2) \|\mathbf{e}_{\mathbf{u}}\|_{\mathbf{L}^4(\mathsf{d}_{\mathbf{z}}^{\alpha},\mathcal{N}_K^*)}^2
\bigg]^{\frac{1}{2}}+h_K^{-\frac{\alpha}{2}}|\mathbf{F}|\Bigg[\sum_{K'\in\T: K'\subset \mathcal{N}_K^*}\sum_{\gamma\in\mathscr{S}_{K'}: \gamma\not\subset \partial\mathcal{N}_K^*}h_{K'}^{\frac{1}{2}}D_{K'}^{\frac{\alpha}{2}}\|\mathcal{J}_{\gamma}\|_{\mathbf{L}^2(\gamma)}
\\
+\sum_{K'\in\T: K'\subset \mathcal{N}_K^*}
h_{K'}D_{K'}^{\frac{\alpha}{2}}\left(\|\tilde{\mathcal{R}}_{K'}\|_{\mathbf{L}^2(K')}+\||\mathbf{u}_{\T}|\mathbf{u}_{\T}-\mathbf{\Pi}_{K'}(|\mathbf{u}_{\T}|\mathbf{u}_{\T})\|_{\mathbf{L}^2(K')}\right)\Bigg].
\end{multline}
Replacing the estimates \eqref{eq:bound_efficiency_05_EO} and \eqref{eq:bound_efficiency_08} in the previous bound allows us to conclude.

\emph{Step 5:} By combining the estimates derived in the previous steps, i.e., estimates \eqref{eq:bound_efficiency_05_EO}, \eqref{eq:bound_efficiency_08}, \eqref{eq:bound_efficiency_09} and \eqref{eq:bound_efficiency_12}, we obtain the desired local efficiency estimate \eqref{eq:bound_efficiency}. This completes the proof.
\end{proof}

\section{Numerical experiments}
\label{sec:numericalexperiments}

In this section, we present a series of numerical examples that illustrate the performance of the estimator $\mathcal{E}_{\alpha}$.

The numerical examples were carried out with a \texttt{C++} code implemented by us. All matrices were assembled exactly, and the global linear systems were solved with the multifrontal massively parallel sparse direct solver (MUMPS) \cite{MUMPS1,MUMPS2}. A quadrature formula was used to compute the right-hand sides, the local indicators, and the error estimator,
which guarantees accuracy by using polynomials of degree 19.  ParaView \cite{Ayachit2015ThePG} was used to visualize suitable finite element approximations.

For a given partition $\T$ we solve the discrete system \eqref{eq:model_discrete} with the lowest order Taylor--Hood pair \eqref{TH:vel_space}--\eqref{TH:press_space} using the iterative strategy described in \textbf{Algorithm 1}. Once we obtain a discrete solution, for each $K \in \mathscr{T}$ we compute  the local error indicator $\mathcal{E}_{\alpha}(\mathbf{u}_{\T},\mathsf{p}_{\T};K)$, defined in \eqref{eq:indicator_e}, to drive the adaptive procedure described in \textbf{Algorithm 2}. In this way, a sequence of adaptively refined meshes is generated from the initial meshes shown in Figure \ref{fig:mesh}.
 
\begin{figure}[!ht]
\centering
\hspace{-1.0cm}
\begin{minipage}[b]{0.35\textwidth}\centering
\includegraphics[width=2cm,height=1.8cm,scale=0.66]{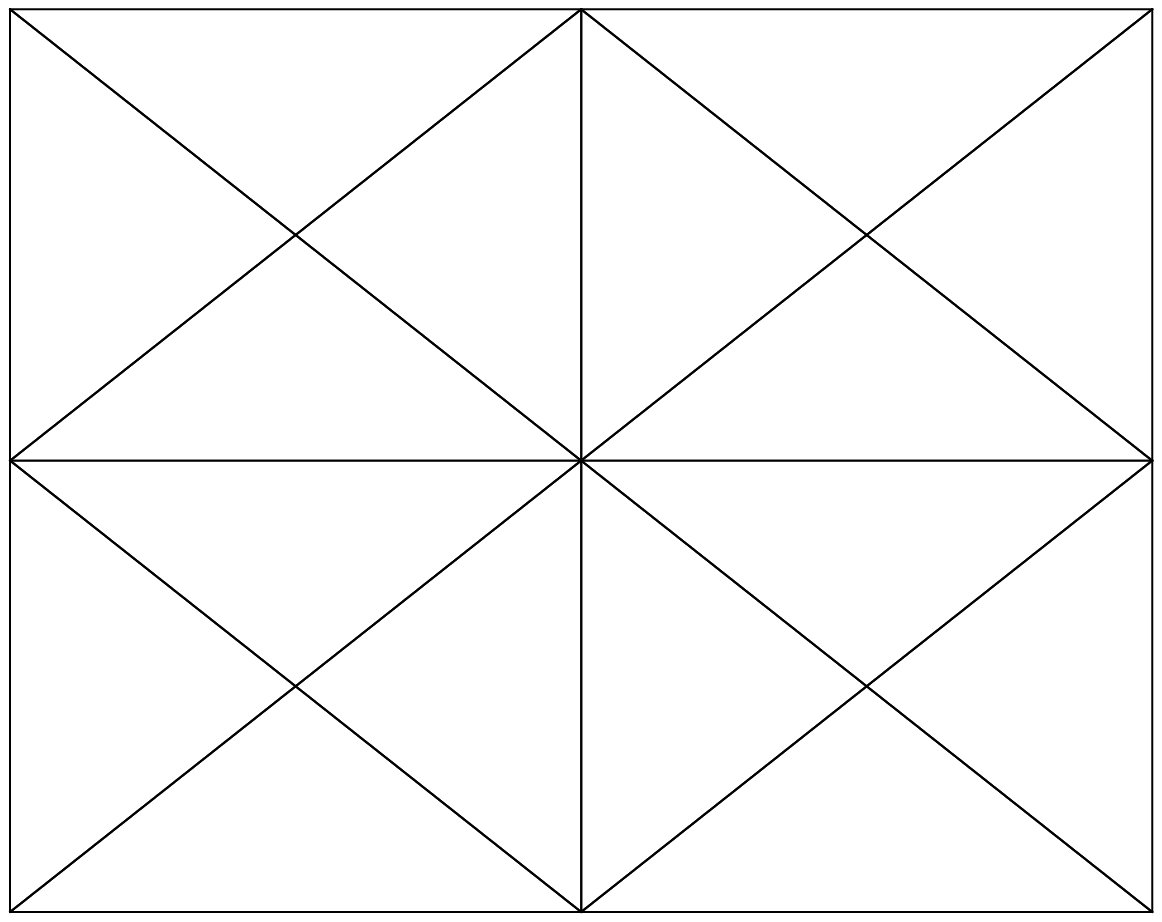} \\
\tiny{(A.1)}
\end{minipage}
\begin{minipage}[b]{0.35\textwidth}\centering
\includegraphics[width=2cm,height=1.8cm,scale=0.66]{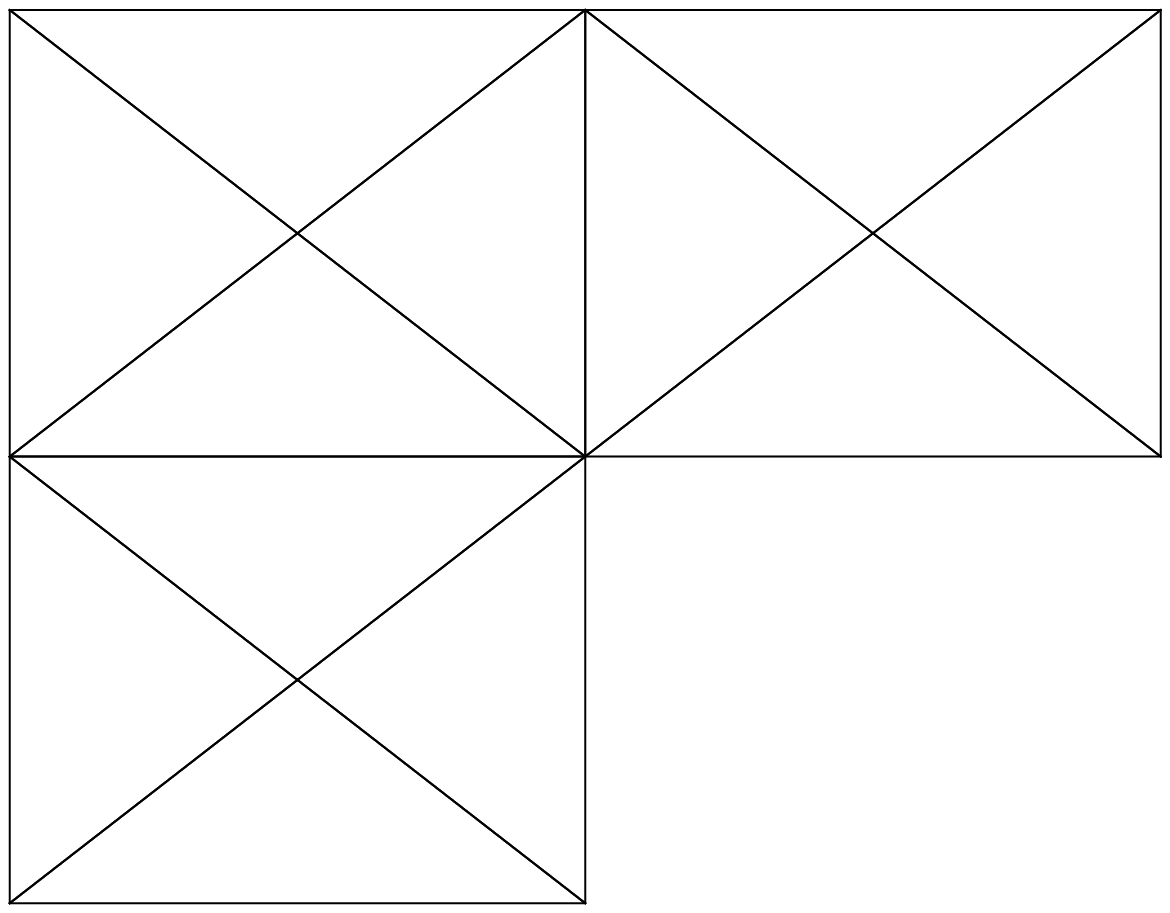} \\
\qquad \tiny{(A.2)}
\end{minipage}
\begin{minipage}[b]{0.35\textwidth}\centering
\includegraphics[width=4cm,height=1.8cm,scale=0.66]{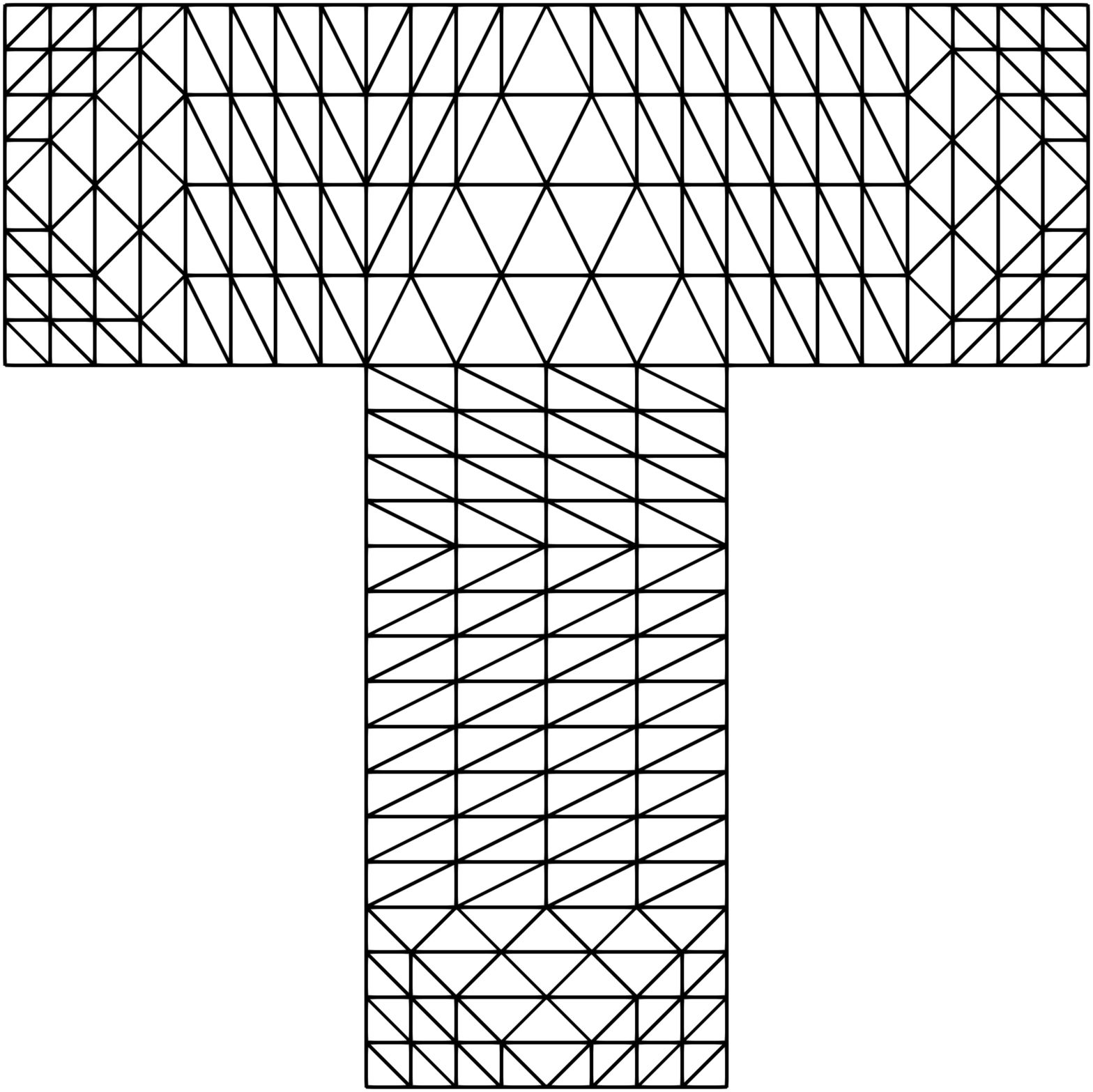} \\
\qquad \tiny{(A.3)}
\end{minipage}
\caption{The initial meshes used in the adaptive algorithm, \emph{Algorithm} 2, when \textrm{(A.1)} $\Omega = (0,1)^2$, \textrm{(A.2)} $\Omega=(-1,1)^2 \setminus[0,1)\times (-1,0]$, and \textrm{(A.3)} $\Omega=((-1.5,1.5)\times(0,1))\cup((-0.5,0.5)\times(-2,1))$.}
\label{fig:mesh}
\end{figure}

Finally, we define the total number of degrees of freedom as $\textnormal{Ndof}:=\textnormal{dim }\mathbf{V}(\T)+\textnormal{dim }\mathcal{P}(\T)$. 
 
\begin{algorithm}[ht]
\caption{\textbf{Iterative Scheme}.}
\label{Algorithm1}
\textbf{Input:} Initial guess $(\mathbf{u}_{\T}^{0},\mathsf{p}_{\T}^{0}) \in \mathbf{V}(\T)\times \mathcal{P}(\T)$, interior point $\mathbf{z} \in \Omega$, $\mathbf{F} \in \mathbb{R}^2$, and tol=$10^{-8}$. Set $i$=1;
\\
$\boldsymbol{1}$: Find $(\mathbf{u}_{\T}^{i},\mathsf{p}_{\T}^{i}) \in \mathbf{V}(\T)\times \mathcal{P}(\T)$ such that
\begin{equation*}
\begin{array}{rcl}
a(\mathbf{u}_{\T}^{i},\mathbf{v}_{\T})+b_{-}(\mathbf{v}_{\T},\mathsf{p}_{\T}^{i})+c(\mathbf{u}_{\T}^{i-1},\mathbf{u}_{\T}^{i};\mathbf{v}_{\T})+d(\mathbf{u}_{\T}^{i-1},\mathbf{u}_{\T}^{i};\mathbf{v}_{\T})&=&\langle \mathbf{F}\delta_{\mathbf{z}},\mathbf{v}_{\T}\rangle ,\\
b_{+}(\mathbf{u}_{\T}^{i},\mathsf{q}_{\T})&=&0,
\end{array}
\end{equation*}
\\
for all $\mathbf{v}_{\T}\in \mathbf{V}(\T)$ and $\mathsf{q}_{\T}\in \mathcal{P}(\T)$, respectively.

$\boldsymbol{2}$: If $|(\mathbf{u}_{\T}^{i}, \mathsf{p}_{\T}^{i})-(\mathbf{u}_{\T}^{i-1}, \mathsf{p}_{\T}^{i-1})|>$ tol, set $i \leftarrow i + 1$ and go to step $\boldsymbol{1}$. Otherwise, \textbf{return} $(\mathbf{u}_{\T}, \mathsf{p}_{\T}) = (\mathbf{u}_{\T}^{i}, \mathsf{p}_{\T}^{i})$. Here, $|\cdot|$ denotes the Euclidean norm.
\end{algorithm}

\begin{algorithm}[ht]
\caption{\textbf{Adaptive Algorithm.}}
\label{Algorithm2}
\textbf{Input:} Initial mesh $\mathscr{T}_0$, interior point $\mathbf{z} \in \Omega$, $\alpha\in(0,2)$, and $\mathbf{F} \in \mathbb{R}^2$;
\\
$\boldsymbol{1}$: Utilize \textbf{Algorithm} \ref{Algorithm1} to solve the discrete problem \eqref{eq:model_discrete};
\\
$\boldsymbol{2}$: For each $K\in\mathscr{T}$ compute the local error indicator $\mathcal{E}_{\alpha}(\mathbf{u}_{\T},\mathsf{p}_{\T};K)$ defined in \eqref{eq:indicator_e};
\\
$\boldsymbol{3}$: Mark an element $K\in\mathscr{T}$ for refinement if;
\begin{equation*}
\mathcal{E}_{\alpha}(\mathbf{u}_{\T},\mathsf{p}_{\T};K)>\tfrac{1}{2}\max_{K'\in \mathscr{T}} \mathcal{E}_{\alpha}(\mathbf{u}_{\T},\mathsf{p}_{\T};K');
\end{equation*}
$\boldsymbol{4}$: From step $\boldsymbol{3}$ construct a new mesh using a longest edge bisection algorithm. Set $i \leftarrow i + 1$ and go to step $\boldsymbol{1}$.
\end{algorithm}

\subsection{Convex and non-convex domains} 
We investigate the performance of the developed a posteriori error estimator in problems posed on convex and non-convex domains with homogeneous Dirichlet boundary conditions. We recall that we are considering the discrete problem \eqref{eq:model_discrete} in the discrete framework defined by the spaces \eqref{TH:vel_space}--\eqref{TH:press_space}. This framework is called the \emph{Taylor–Hood approximation}.

\subsubsection{Convex domain}
\label{subsec:convex_domain}
We investigate the performance of the a posteriori error estimator $\mathcal{E}_{\alpha}(\mathbf{u}_{\T},\mathsf{p}_{\T};\T)$ when used to guide the adaptive procedure of \textbf{Algorithm 2}. In particular, we study the effects of varying the exponent $\alpha$ in the Muckenhoupt weight. For this purpose, we consider $\Omega=(0,1)^2$, $\mathbf{z}=(0.5,0.5)^{\mathsf{T}}$, $\mathbf{F}=(1,1)^{\mathsf{T}}$, and $\alpha=\{0.25,0.5,0.75,1.0,1.25,1.5,1.75\}$.

Figure \ref{fig:test_01} shows the results obtained for Example 1. We note that the devised a posteriori error estimator $\mathcal{E}_{\alpha}$ achieves optimal computational convergence rates for all considered values of the parameter $\alpha$. We also note that most of the refinement focus on the singular source point.

\begin{figure}[!ht]
\centering
\psfrag{Ndof(-1)}{\Large $\text{Ndof}^{-1}$}
\psfrag{Est p=0.25}{\Large $\alpha=0.25$}
\psfrag{Est p=0.5}{\Large $\alpha=0.5$}
\psfrag{Est p=0.75}{\Large $\alpha=0.75$}
\psfrag{Est p=1.0}{\Large $\alpha=1.0$}
\psfrag{Est p=1.25}{\Large $\alpha=1.25$}
\psfrag{Est p=1.5}{\Large $\alpha=1.5$}
\psfrag{Est p=1.75}{\Large $\alpha=1.75$}

\begin{minipage}[b]{0.24\textwidth}\centering
\scriptsize{\qquad $\mathcal{E}_{\alpha}(\mathbf{u}_{\T},\mathsf{p}_{\T};\T)$}
\includegraphics[trim={0 0 0 0},clip,width=3.1cm,height=3.2cm,scale=0.5]{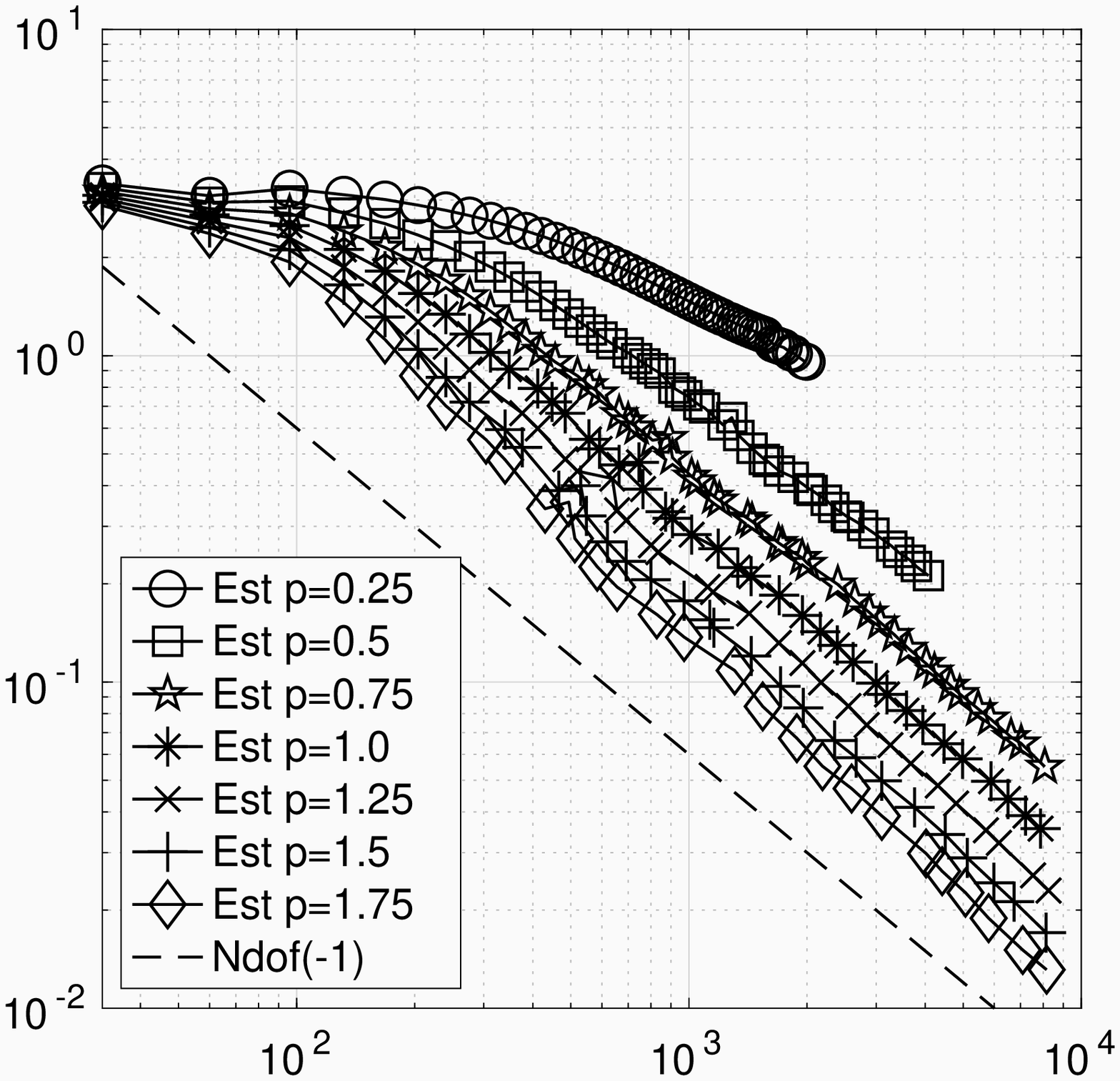} \\
\qquad \tiny{(B.1)}
\end{minipage}
\begin{minipage}[b]{0.24\textwidth}\centering
\includegraphics[trim={25cm 0 25cm 0},clip,width=3.0cm,height=3.4cm,scale=0.5]{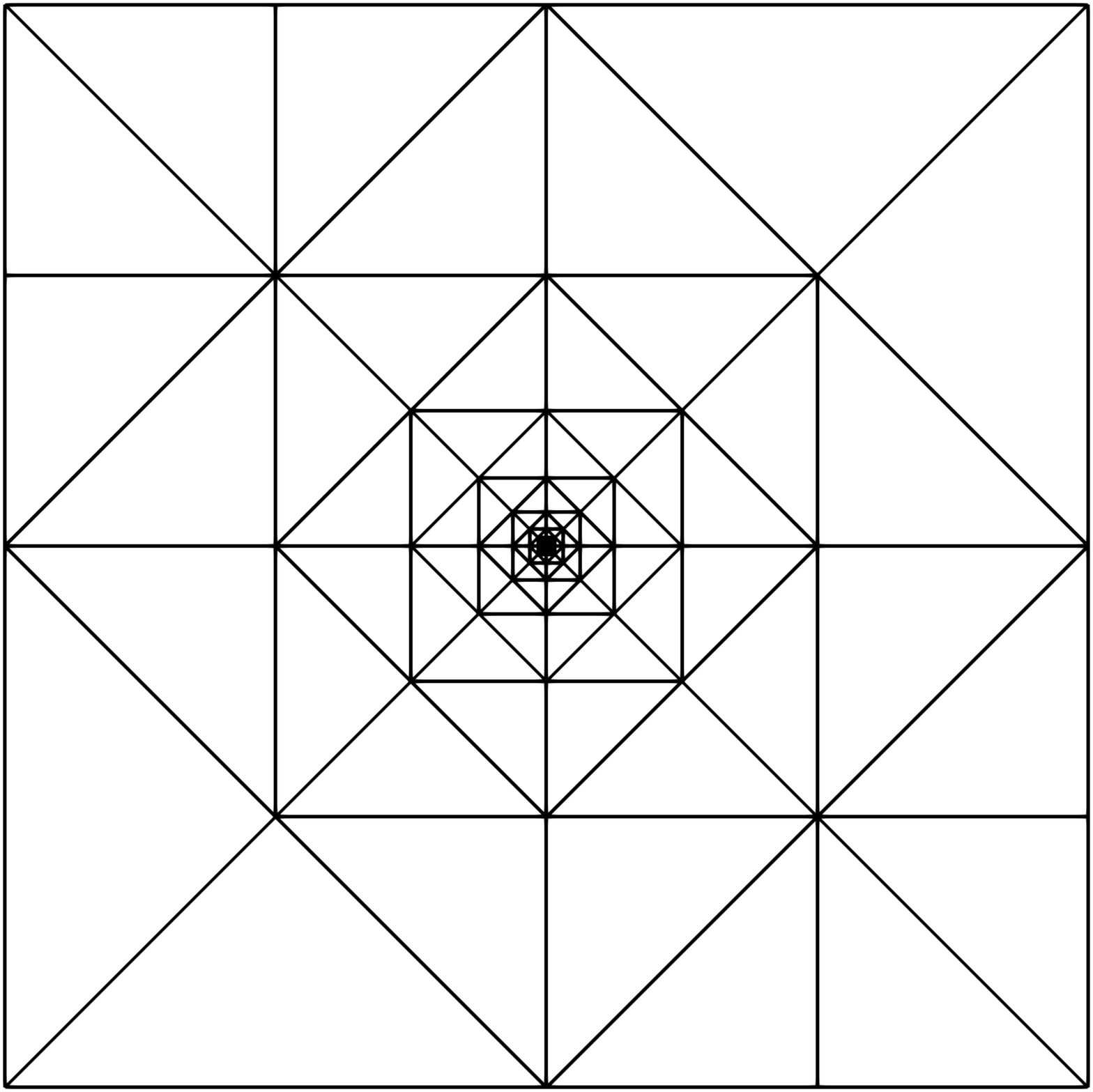} \\
\qquad \tiny{(B.2)}
\end{minipage}
\begin{minipage}[b]{0.24\textwidth}\centering
\includegraphics[trim={25cm 0 25cm 0},clip,width=3.0cm,height=3.4cm,scale=0.5]{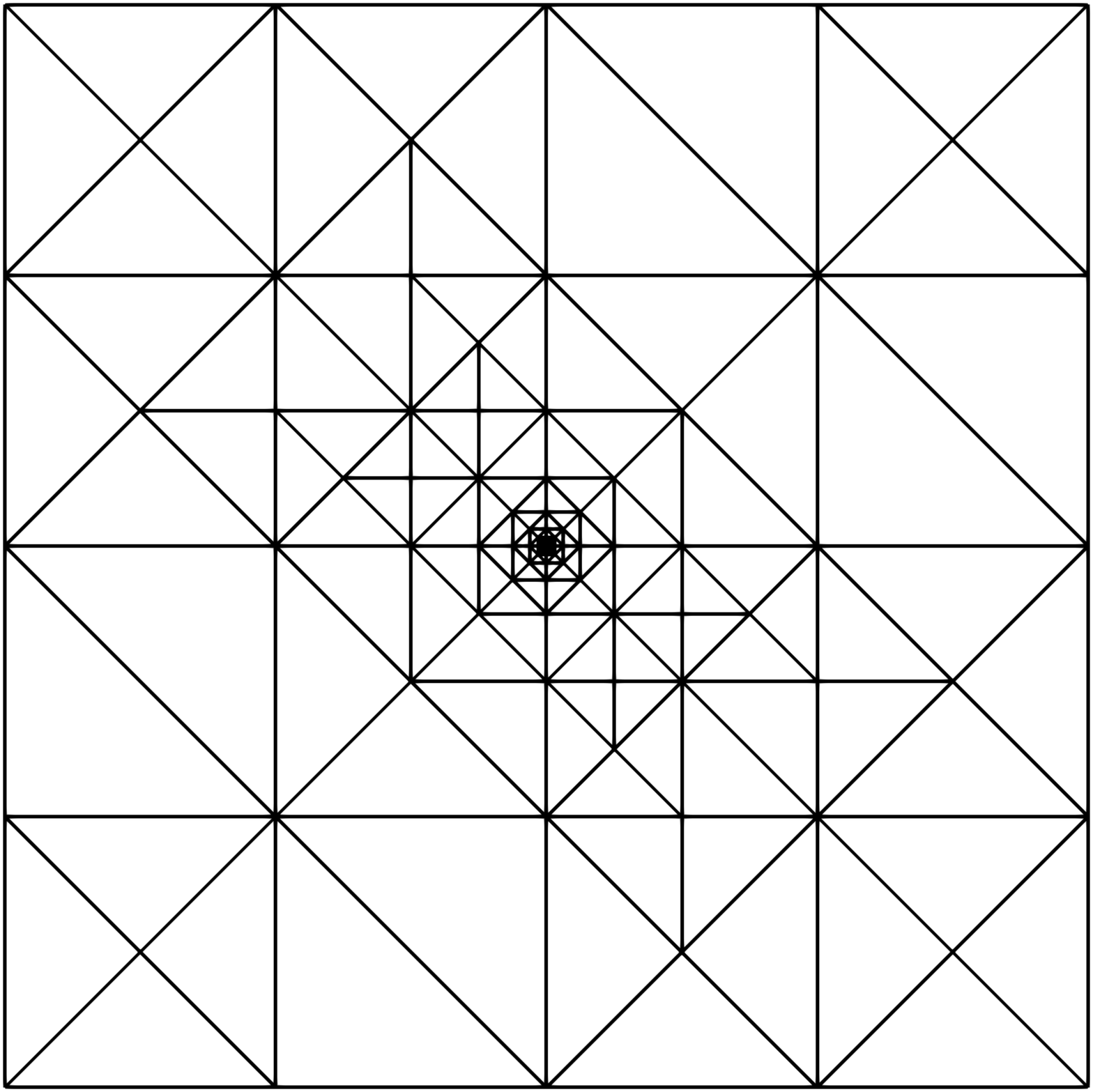} \\
\qquad \tiny{(B.3)}
\end{minipage}
\begin{minipage}[b]{0.24\textwidth}\centering
\includegraphics[trim={25cm 0 25cm 0},clip,width=3.0cm,height=3.4cm,scale=0.5]{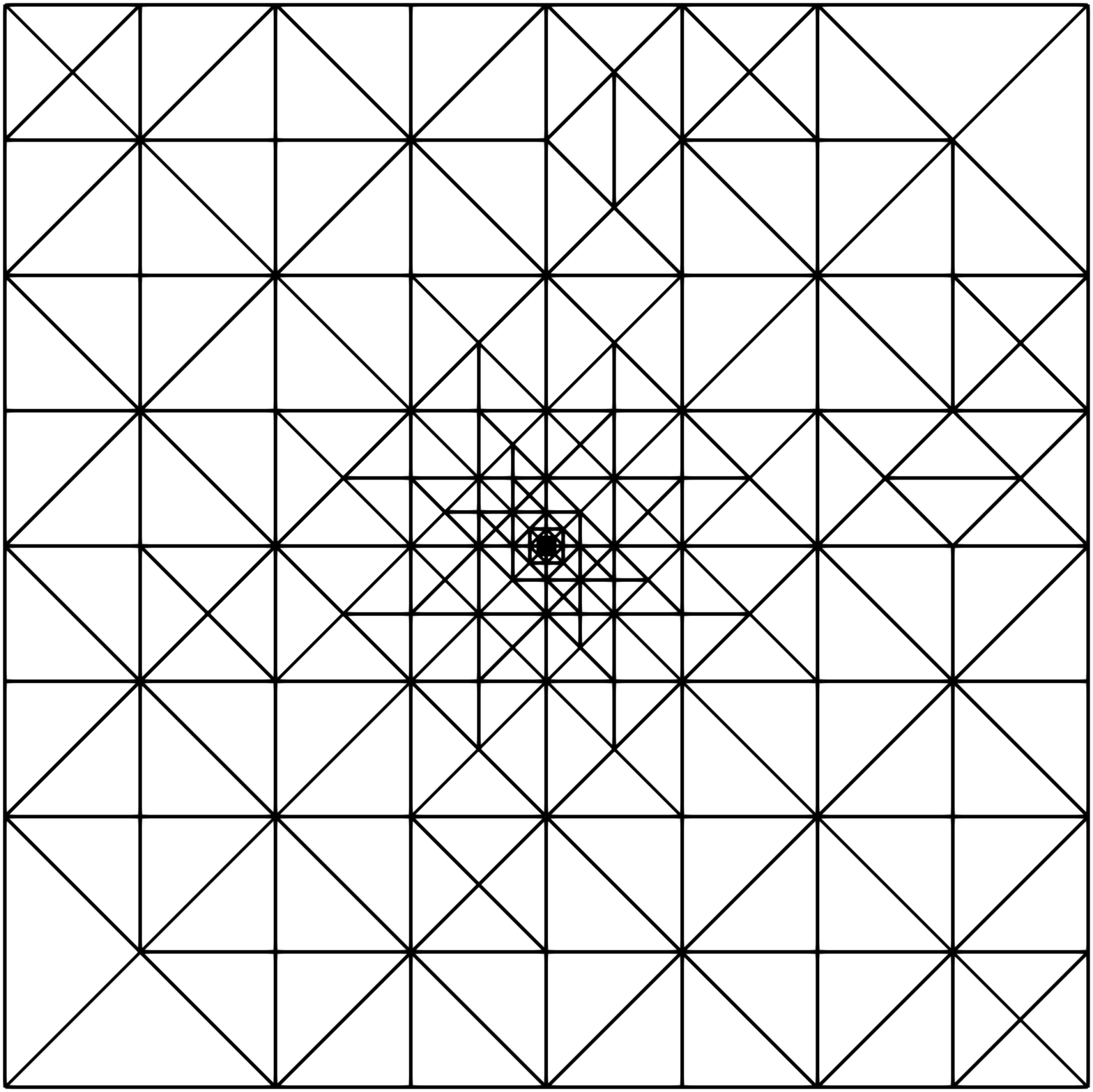} \\
\qquad \tiny{(B.4)}
\end{minipage}
\caption{Example 1: Computational rates of convergence for $\mathcal{E}_{\alpha}(\mathbf{u}_{\T},\mathsf{p}_{\T};\T)$ considering $\alpha \in \{0.25, 0.5, 0.75, 1.0, 1.25, 1.5, 1.75\}$ (B.1) and the meshes obtained after 20 adaptive refinements for $\alpha=0.5$ (156 elements and 85 vertices) (B.2); $\alpha=1.0$ (192 elements and 105 vertices) (B.3);  and $\alpha=1.5$ (304 elements and 167 vertices) (B.4).}
\label{fig:test_01}
\end{figure}

\subsubsection{Non-convex domain}
We consider $\Omega=(-1,1)^2 \setminus[0,1)\times (-1,0]$, $\mathbf{z}=(0.5,0.5)^{\mathsf{T}}$, and $\mathbf{F}=(1,1)^{\mathsf{T}}$. Figure \ref{fig:test_02} shows the results obtained for Example 2. We note that optimal computational convergence rates are obtained for all considered values of the parameter $\alpha$: $\alpha \in \{0.25,0.5,0.75,1.0,1.25,1.5,1.75\}$. We also note that most of the refinement is concentrated around the singular source point and that the geometric singularity for $\alpha \geq 1$ is quickly noticed

\begin{figure}[!ht]
\centering
\psfrag{Ndof(-1)}{\Large $\text{Ndof}^{-1}$}
\psfrag{Est p=0.25}{\Large $\alpha=0.25$}
\psfrag{Est p=0.5}{\Large $\alpha=0.5$}
\psfrag{Est p=0.75}{\Large $\alpha=0.75$}
\psfrag{Est p=1.0}{\Large $\alpha=1.0$}
\psfrag{Est p=1.25}{\Large $\alpha=1.25$}
\psfrag{Est p=1.5}{\Large $\alpha=1.5$}
\psfrag{Est p=1.75}{\Large $\alpha=1.75$}

\begin{minipage}[b]{0.24\textwidth}\centering
\scriptsize{\qquad $\mathcal{E}_{\alpha}(\mathbf{u}_{\T},\mathsf{p}_{\T};\T)$}
\includegraphics[trim={0 0 0 0},clip,width=3.1cm,height=3.2cm,scale=0.5]{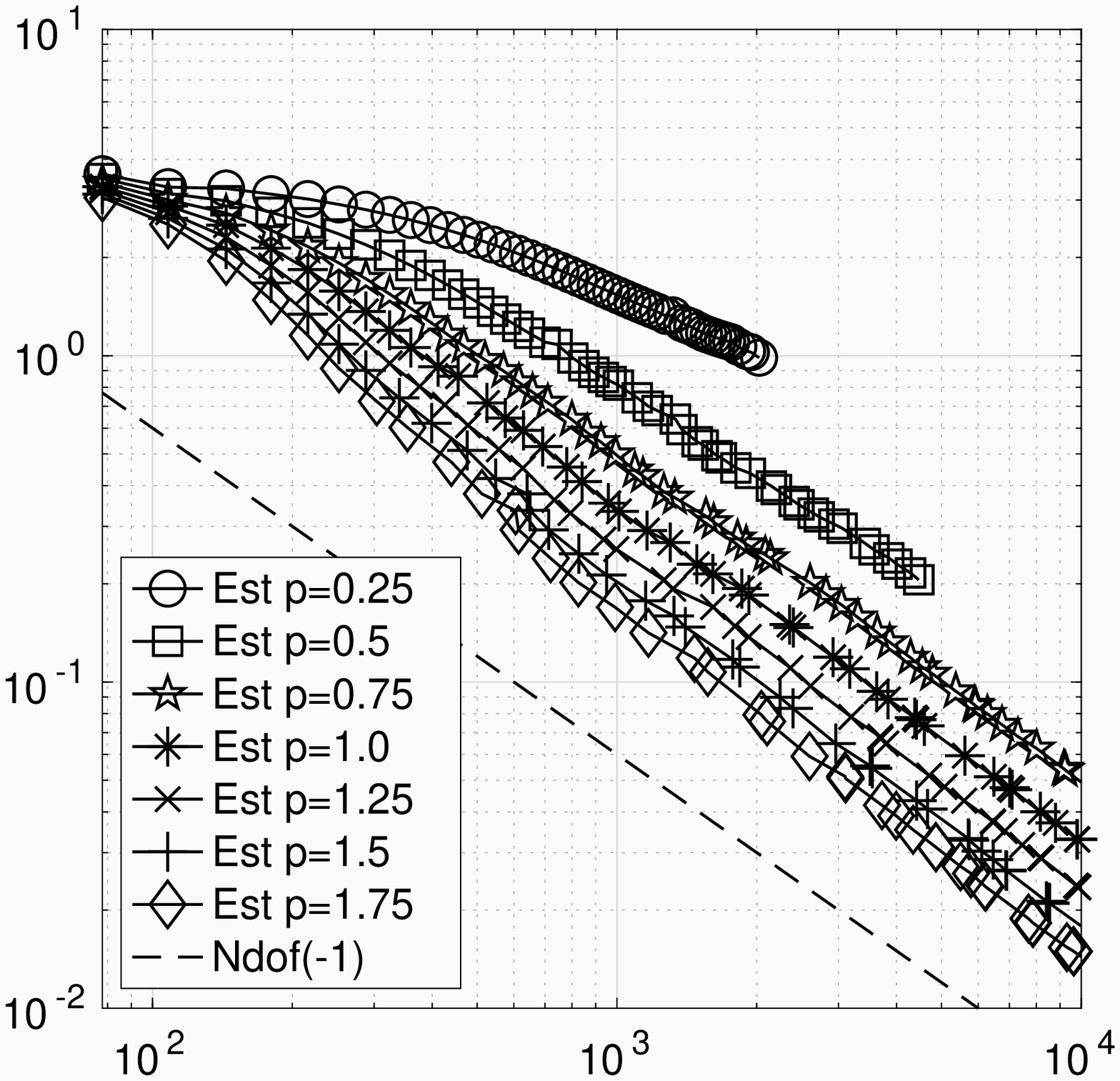} \\
\qquad \tiny{(B.1)}
\end{minipage}
\begin{minipage}[b]{0.24\textwidth}\centering
\includegraphics[trim={25cm 0 25cm 0},clip,width=3.0cm,height=3.4cm,scale=0.5]{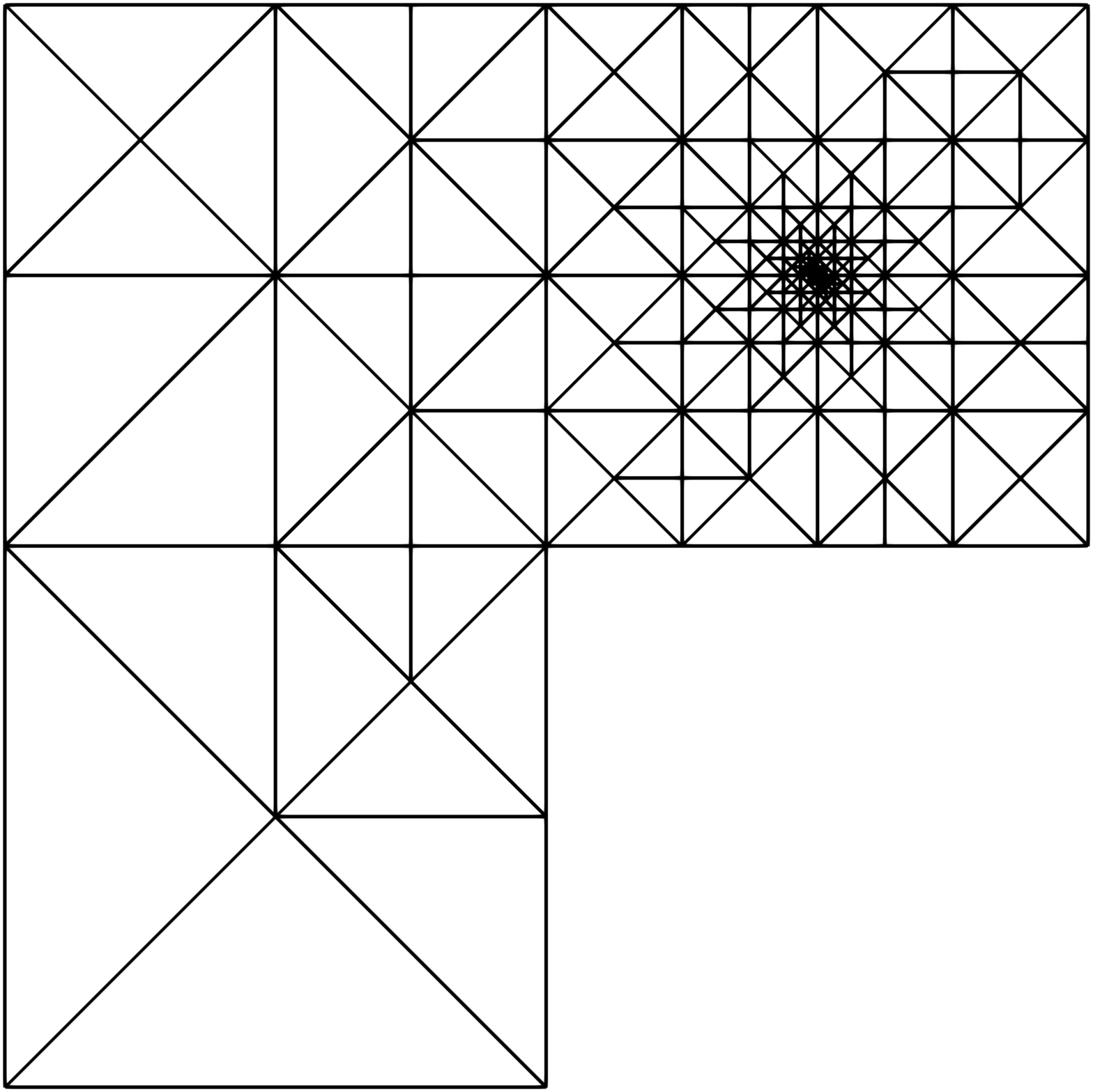} \\
\qquad \tiny{(B.2)}
\end{minipage}
\begin{minipage}[b]{0.24\textwidth}\centering
\includegraphics[trim={25cm 0 25cm 0},clip,width=3.0cm,height=3.4cm,scale=0.5]{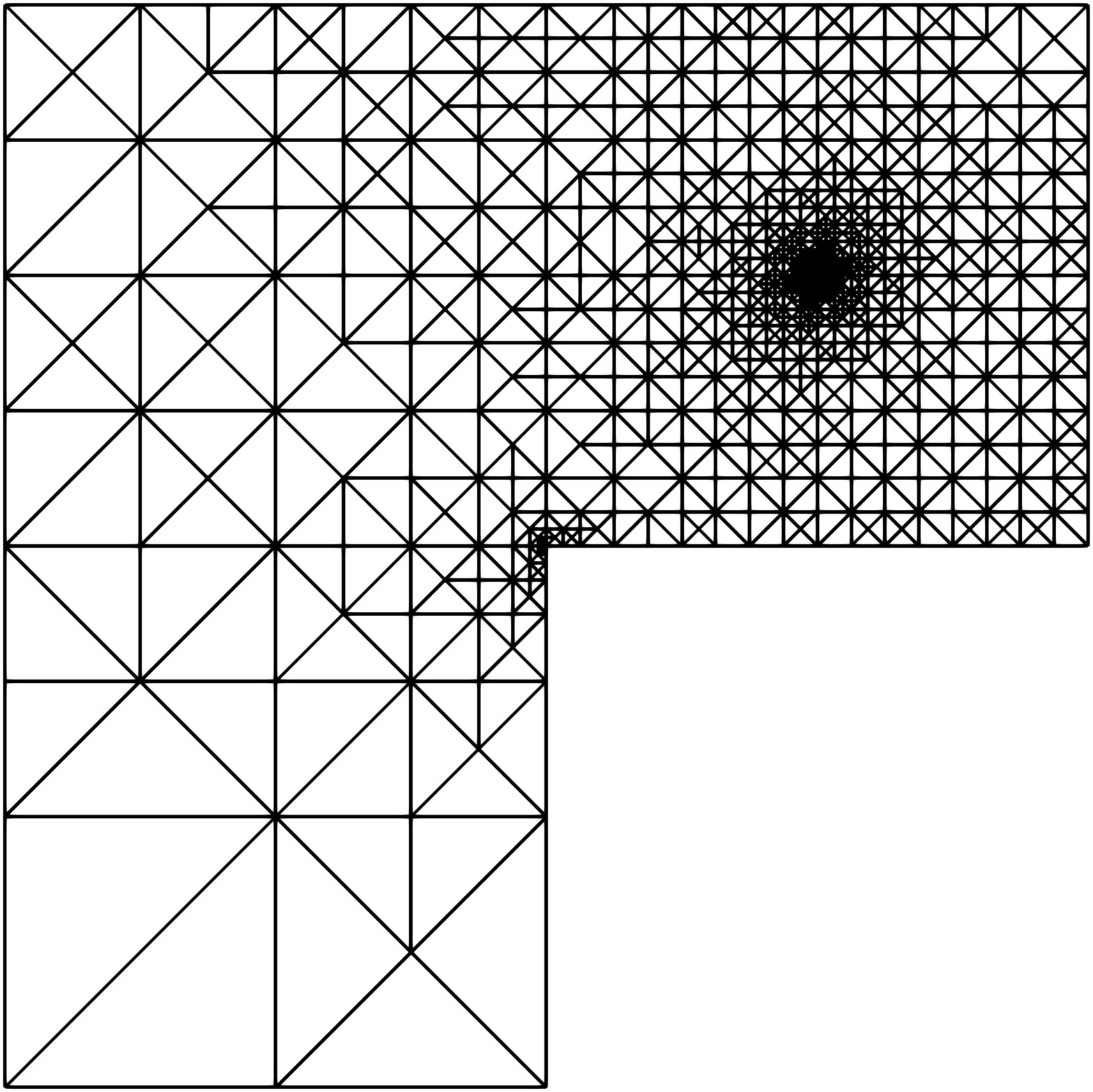} \\
\qquad \tiny{(B.3)}
\end{minipage}
\begin{minipage}[b]{0.24\textwidth}\centering
\includegraphics[trim={25cm 0 25cm 0},clip,width=3.0cm,height=3.4cm,scale=0.5]{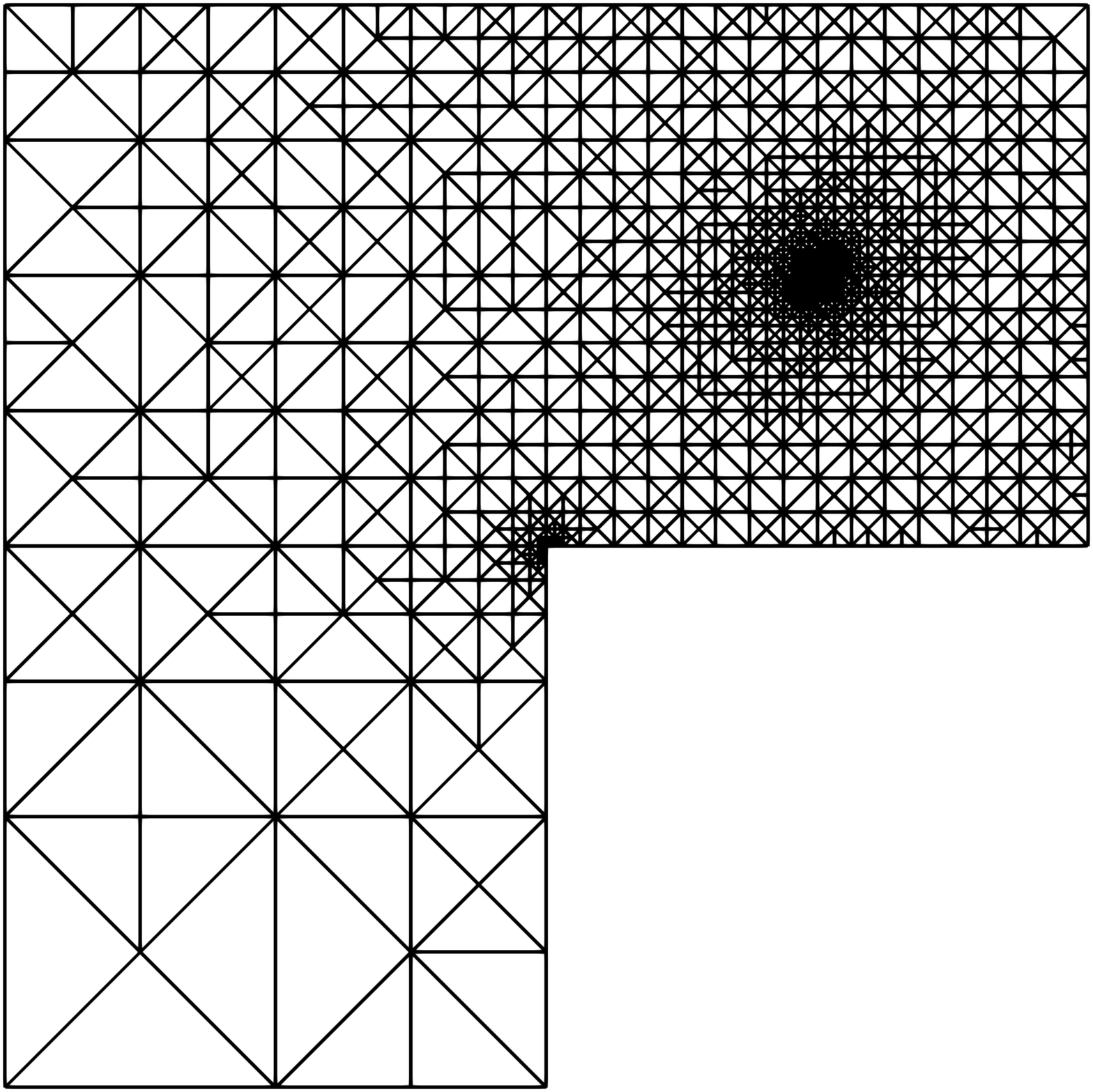} \\
\qquad \tiny{(B.4)}
\end{minipage}
\caption{Example 2: Computational rates of convergence for $\mathcal{E}_{\alpha}(\mathbf{u}_{\T},\mathsf{p}_{\T};\T)$ considering $\alpha \in \{0.25, 0.5, 0.75, 1.0, 1.25, 1.5, 1.75\}$ (B.1) and the meshes obtained after 40 adaptive refinements for $\alpha=0.5$ (534 elements and 280 vertices) (B.2); $\alpha=1.0$ (1917 elements and 994 vertices) (B.3);  and $\alpha=1.5$ (2401 elements and 1247 vertices) (B.4).}
\label{fig:test_02}
\end{figure}

\subsection{A series of Dirac delta points} 
We consider $\Omega=((-1.5,1.5)\times(0,1))\cup((-0.5,0.5)\times(-2,1))$ and go beyond the theory by considering nonhomogeneous Dirichlet boundary conditions and a series of Dirac delta sources on the right-hand side:
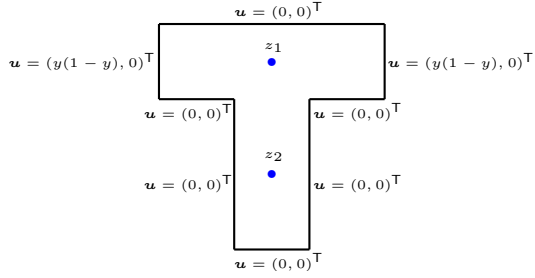
\begin{figure}[!ht]
\centering
\newrgbcolor{xdxdff}{0.49 0.49 1}
\psset{xunit=1.0cm,yunit=1.0cm,algebraic=true,dotstyle=o,dotsize=3pt 0,linewidth=0.8pt,arrowsize=3pt 2,arrowinset=0.25}
\begin{pspicture*}(-3.5,-2.28)(3.5,1.3)
\psline(0.5,-2)(0.5,0)
\psline(0.5,0)(1.5,0)
\psline(1.5,0)(1.5,1)
\psline(1.5,1)(-1.5,1)
\psline(-1.5,1)(-1.5,0)
\psline(-1.5,0)(-0.5,0)
\psline(-0.5,0)(-0.5,-2)
\psline(-0.5,-2)(0.5,-2)
\rput[tl](-3.5,0.62){\tiny $\boldsymbol{u}=(y(1-y),0)^{\mathsf{T}}$}
\rput[tl](-1.7,-0.06){\tiny $\boldsymbol{u}=(0,0)^{\mathsf{T}}$}
\rput[tl](-1.7,-1){\tiny $\boldsymbol{u}=(0,0)^{\mathsf{T}}$}
\rput[tl](-0.5,-2.05){\tiny $\boldsymbol{u}=(0,0)^{\mathsf{T}}$}
\rput[tl](0.55,-1){\tiny $\boldsymbol{u}=(0,0)^{\mathsf{T}}$}
\rput[tl](0.55,-0.05){\tiny $\boldsymbol{u}=(0,0)^{\mathsf{T}}$}
\rput[tl](1.54,0.62){\tiny $\boldsymbol{u}=(y(1-y),0)^{\mathsf{T}}$}
\rput[tl](-0.5,1.3){\tiny $\boldsymbol{u}=(0,0)^{\mathsf{T}}$}
\rput[tl](-0.1,0.75){\tiny $z_1$}
\rput[tl](-0.1,-0.7){\tiny $z_{2}$}
\begin{scriptsize}
\psdots[dotstyle=*,linecolor=blue](0,0.5)
\psdots[dotstyle=*,linecolor=blue](0,-1)
\end{scriptsize}
\end{pspicture*}
\caption{Example 3: T--shaped domain with Dirac delta source points located at $\mathbf{z}_{1}=(0,0.5)$ and $\mathbf{z}_{2}=(0,-1)$.}
\label{Fig:T}
\end{figure}
\begin{equation}\label{eq:modelnew}
-\Delta\mathbf{u} +(\mathbf{u}\cdot\nabla)\mathbf{u}+|\mathbf{u}|\mathbf{u}+ \mathbf{u}+\nabla \mathsf{p}  = \sum_{\mathbf{z}\in\mathcal{Z}} \mathbf{F}_{\mathbf{z}}\delta_{\mathbf{z}}  \text{ in }\Omega,
\end{equation}
where $\mathcal{Z}\subset\Omega$ denotes a finite set with $\#\mathcal{Z}>1$ and $\{\mathbf{F}_{\mathbf{z}}\}_{\mathbf{z}\in\mathcal{Z}}\subset\mathbb{R}^{2}$. In particular, we consider $\mathbf{F}_\mathbf{z}=(1,1)^{\mathsf{T}}$ for all $\mathbf{z}\in \mathcal{Z}$. Let us introduce the weight
\begin{eqnarray}\label{new_weight}
\rho(\mathbf{x})=\left\{
\begin{array}{cc}
\mathsf{d}_{\mathbf{z}}^{\alpha}(\mathbf{x}),
& \exists~\mathbf{z}\in\mathcal{Z}:~|\mathbf{x}-\mathbf{z}|< \frac{d_{\mathcal{Z}}}{2},
\\
1, & |\mathbf{x}-\mathbf{z}|\geq\frac{d_{\mathcal{Z}}}{2}~\forall~\mathbf{z}\in\mathcal{Z},
\end{array}
\right.
\end{eqnarray}
where $d_{\mathcal{Z}}=\min\{\textrm{dist}(\mathcal{Z},\partial\Omega),\min\{|\mathbf{z}-\mathbf{z}'|:\mathbf{z},\mathbf{z}'\in\mathcal{Z},\mathbf{z}\neq \mathbf{z}'\}\}$. With this weight at hand, we modify the definition of the spaces $\mathcal{X}$ and $\mathcal{Y}$ as follows: $\mathcal{X}=\mathbf{H}_{0} ^{1}(\rho,\Omega)\times L^{2}(\rho,\Omega)\setminus\mathbb{R},$ and $\mathcal{Y}=\mathbf{H}_{0} ^{1}(\rho^{-1},\Omega)\times L^{2}(\rho^{-1},\Omega)\setminus\mathbb{R}$. The weight $\rho$ belongs to the Muckenhoupt class $A_2$ (see \cite[Theorem 6]{MR3215609}) and also to the restricted class $A_2(\Omega)$. Define $D_{K,\mathcal{Z}}:=\min_{\mathbf{z}\in\mathcal{Z}}\left\{\max_{\mathbf{x}\in K}|\mathbf{x}-\mathbf{z}|\right\}$. With all these ingredients, we propose the following a posteriori error estimator when considering the Taylor–Hood scheme:
\begin{eqnarray}
\label{eq:new_estimator_e}
\mathcal{D}_{\alpha}(\mathbf{u}_{\T},\mathsf{p}_{\T};\T):= \left(\sum_{K\in\T}\mathcal{D}_{\alpha}^2(\mathbf{u}_{\T},\mathsf{p}_{\T};K)\right)^{\frac{1}{2}},
\end{eqnarray}
where the local errors indicators are such that
\begin{multline}\label{eq:new_indicator_e}
\displaystyle\mathcal{D}_{\alpha}^2(\mathbf{u}_{\T},\mathsf{p}_{\T};K):=h_K^2 D_{K,\mathcal{Z}}^{\alpha} \|\mathcal{R}_K\|_{\mathbf{L}^2(K)}^2
+
h_K D_{K,\mathcal{Z}}^{\alpha} \|\mathcal{J}_{\gamma} \|_{\mathbf{L}^2(\partial K\setminus\partial \Omega)}^2\!\\+\!\|\text{div }\mathbf{u}_{\T}\|_{L^2(\rho,K)}^2
+
\sum_{\mathbf{z}\in\mathcal{Z}\cap K}h_K^{\alpha}|\mathbf{F}_{\mathbf{z}}|^2 .
\end{multline}

Figure \ref{fig:test_03} shows the results obtained for Example 3. It shows the adaptive mesh obtained after 60 iterations, the streamlines associated with the velocity field $\mathbf{u}_{\T}$, the pressure contours, and the velocity and pressure elevations. It can be observed that the developed a posteriori error estimator achieves an optimal computational convergence rate and that most of the refinement is concentrated on the singular sources and the geometric singularities involved.

\begin{figure}[!ht]
\centering
\psfrag{Ndof(-1)}{\Large $\text{Ndof}^{-1}$}
\psfrag{Est p=1.0}{\Large $\alpha=1.0$}

\begin{minipage}[b]{0.25\textwidth}\centering
\scriptsize{\qquad $\mathcal{D}_{\alpha}(\mathbf{u}_{\T},\mathsf{p}_{\T};\T)$}
\includegraphics[trim={0 0 0 0},clip,width=3.1cm,height=3.3cm,scale=0.5]{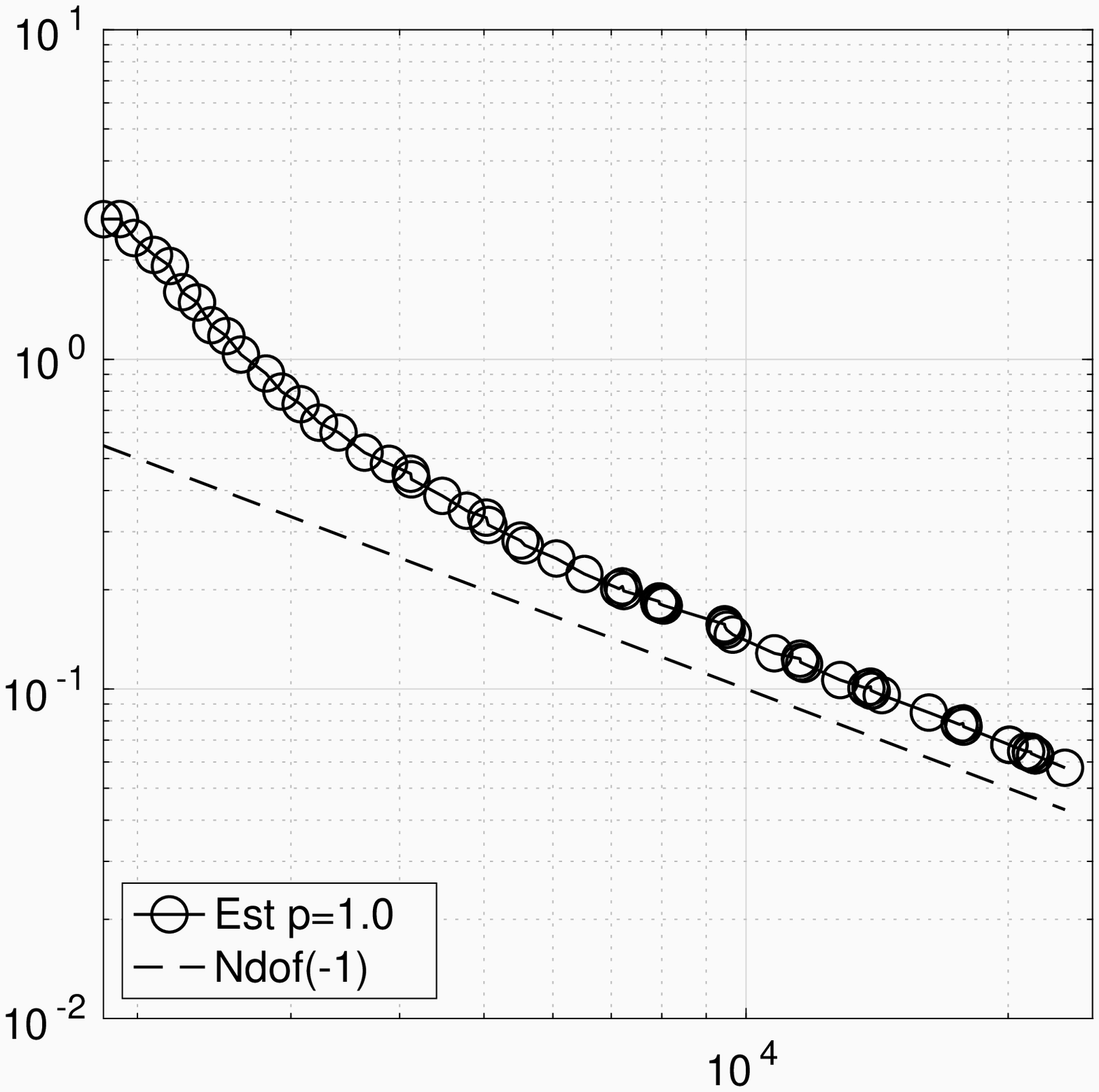} \\
\qquad \tiny{(C.1)}
\end{minipage}
\begin{minipage}[b]{0.4\textwidth}\centering
\includegraphics[trim={25cm 0 25cm 0},clip,width=3.5cm,height=3.5cm,scale=0.5]{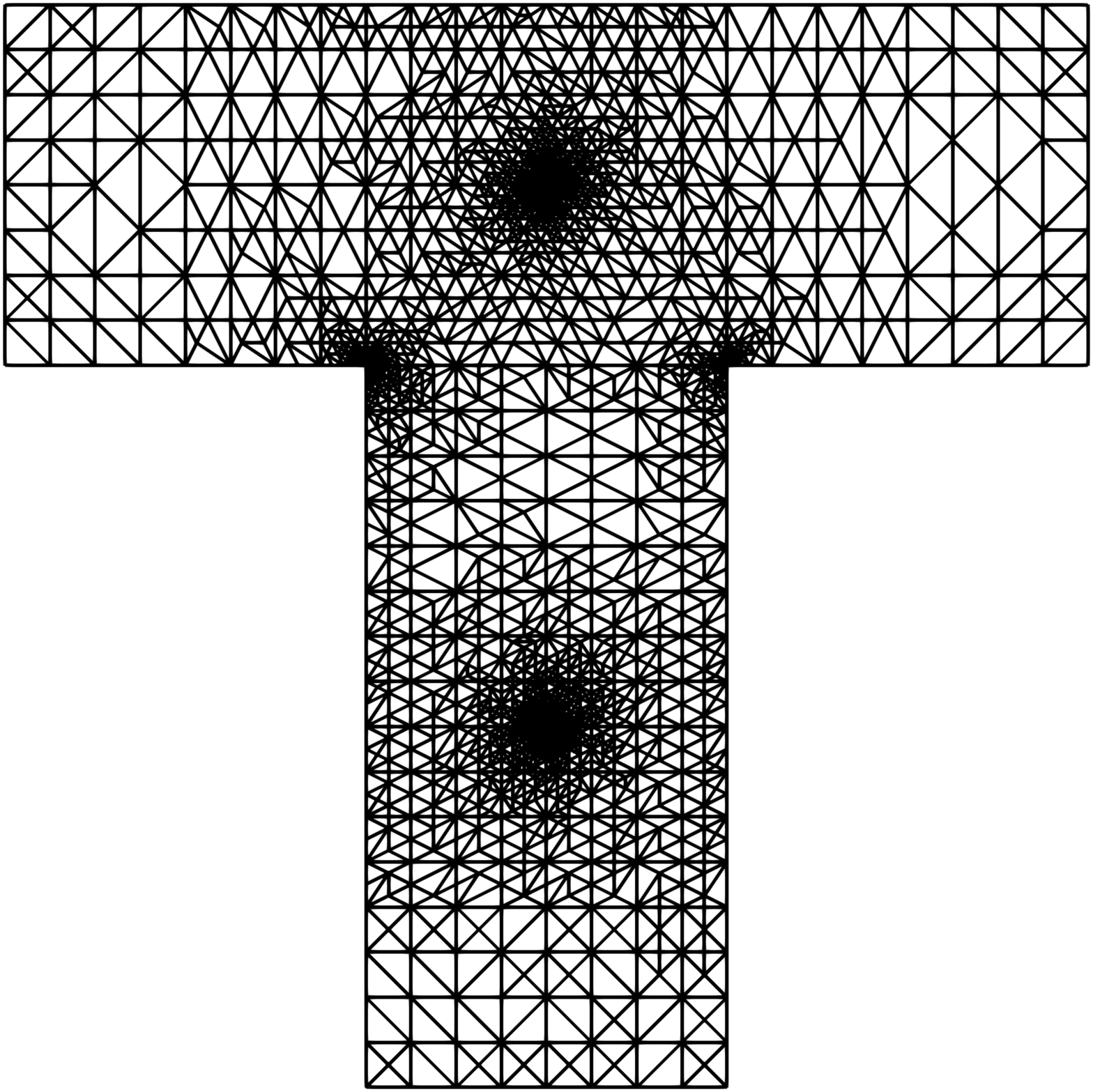} \\
\qquad \tiny{(C.2)}
\end{minipage}
\begin{minipage}[b]{0.3\textwidth}\centering
\includegraphics[trim={20cm 0 20cm 0},clip,width=3.5cm,height=3.5cm,scale=0.5]{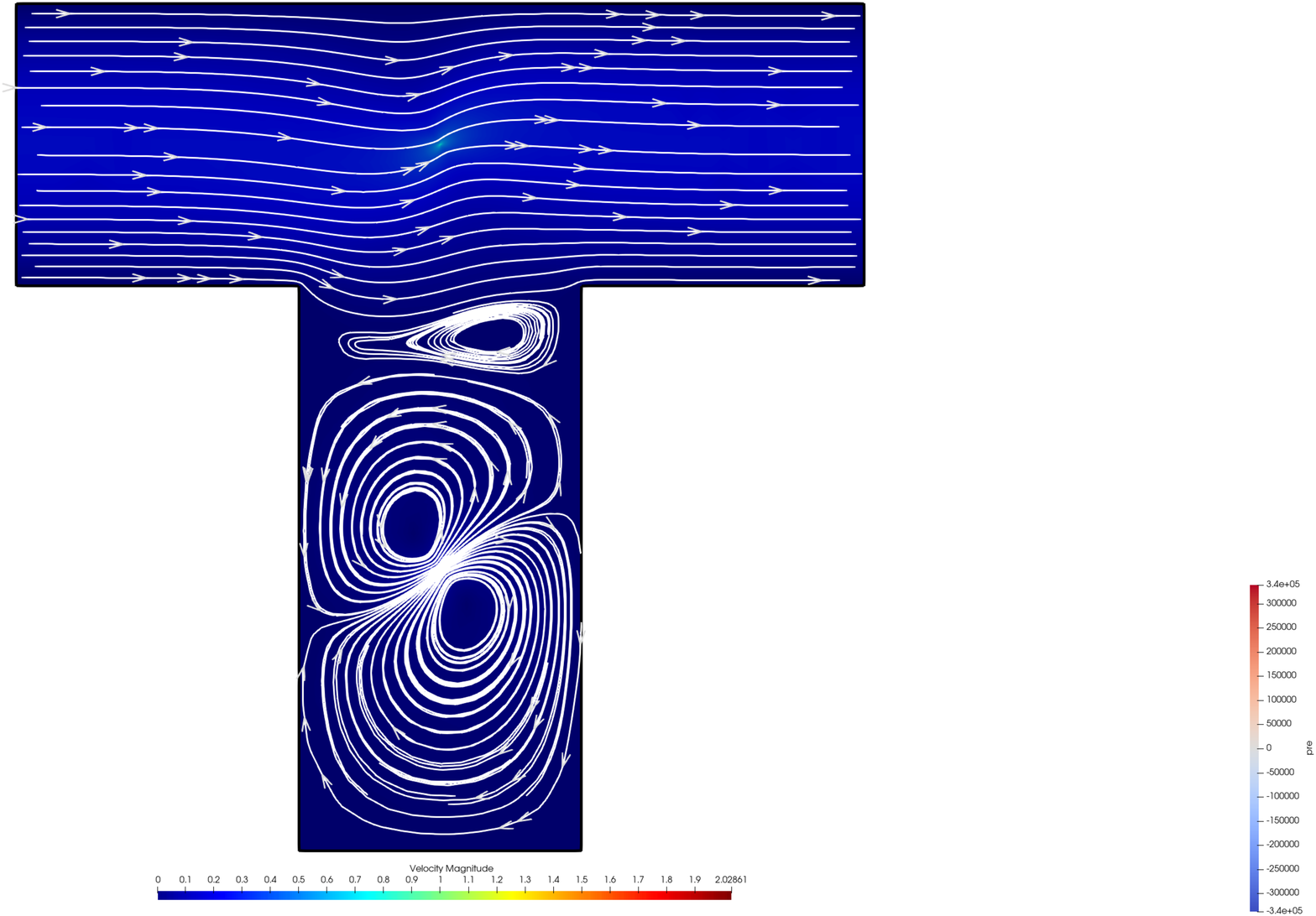} \\
\qquad \tiny{(C.3)}
\end{minipage}
\\
\begin{minipage}[b]{0.25\textwidth}\centering
\includegraphics[trim={5cm 0 5cm 0},clip,width=3.2cm,height=3.6cm,scale=0.5]{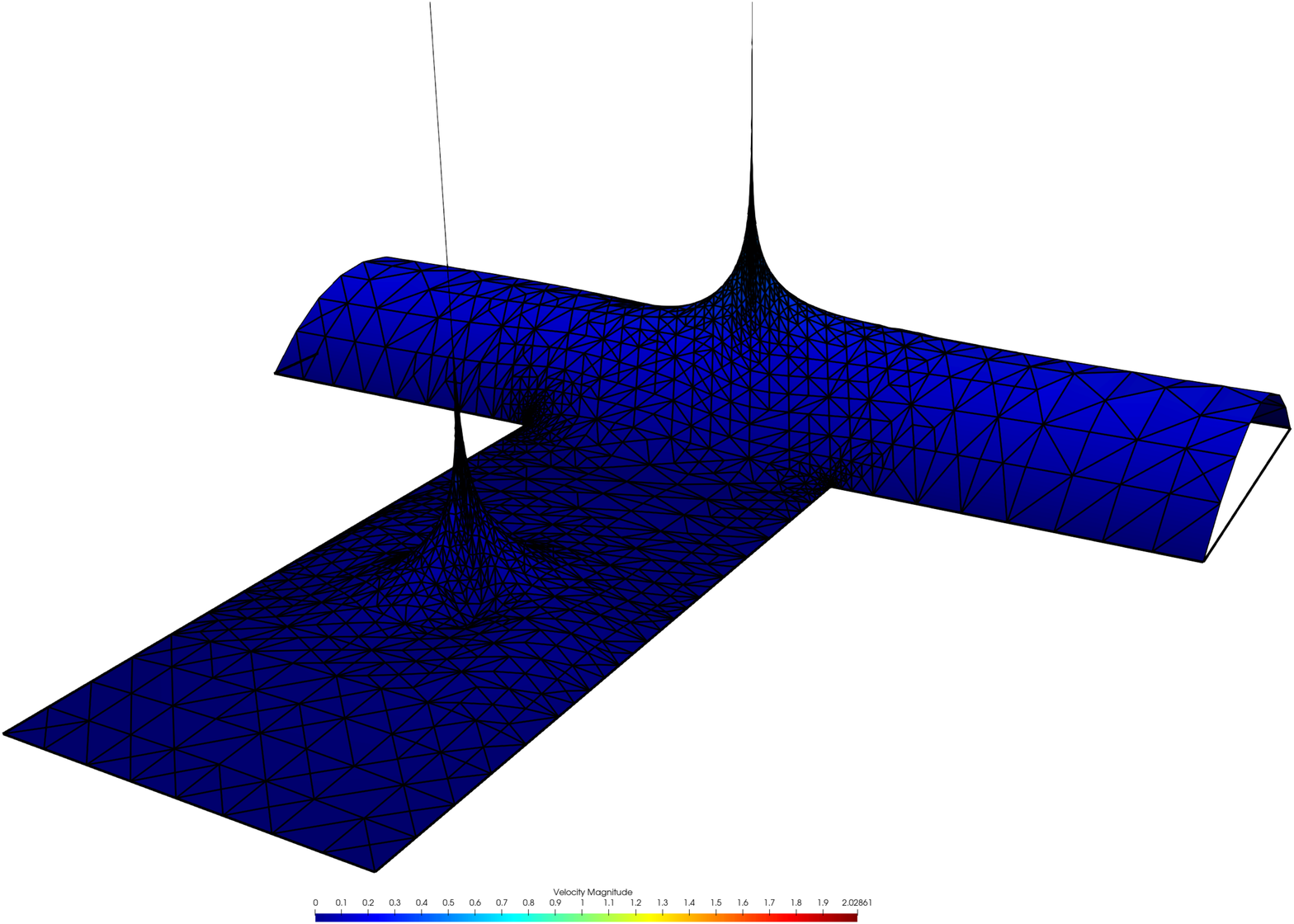} \\
\qquad \tiny{(C.4)}
\end{minipage}
\begin{minipage}[b]{0.4\textwidth}\centering
\includegraphics[trim={20cm 0 20cm 0},clip,width=3.5cm,height=3.6cm,scale=0.5]{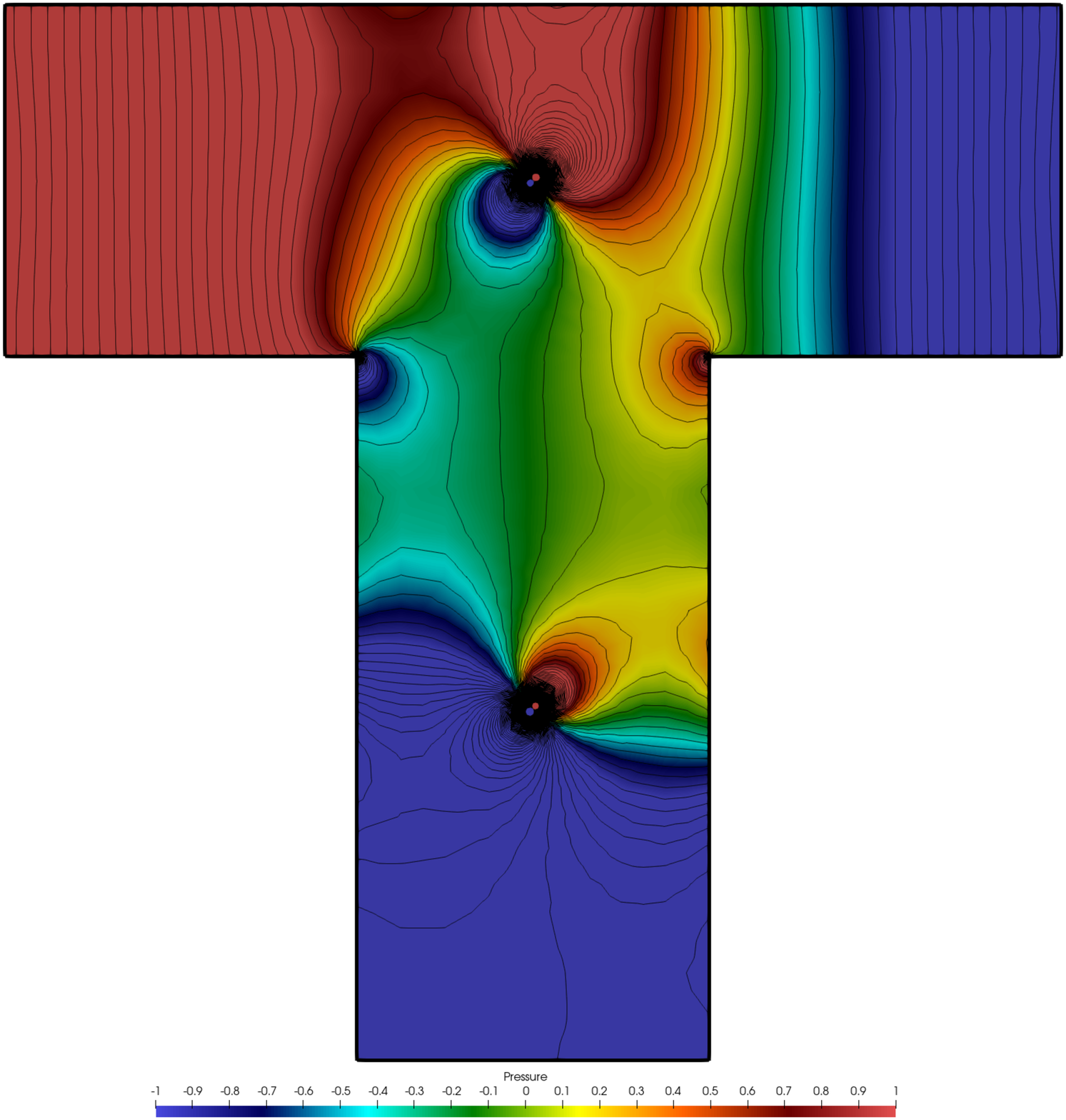} \\
\qquad \tiny{(C.5)}
\end{minipage}
\begin{minipage}[b]{0.3\textwidth}\centering
\includegraphics[trim={5cm 0 5cm 0},clip,width=3.6cm,height=3.6cm,scale=0.5]{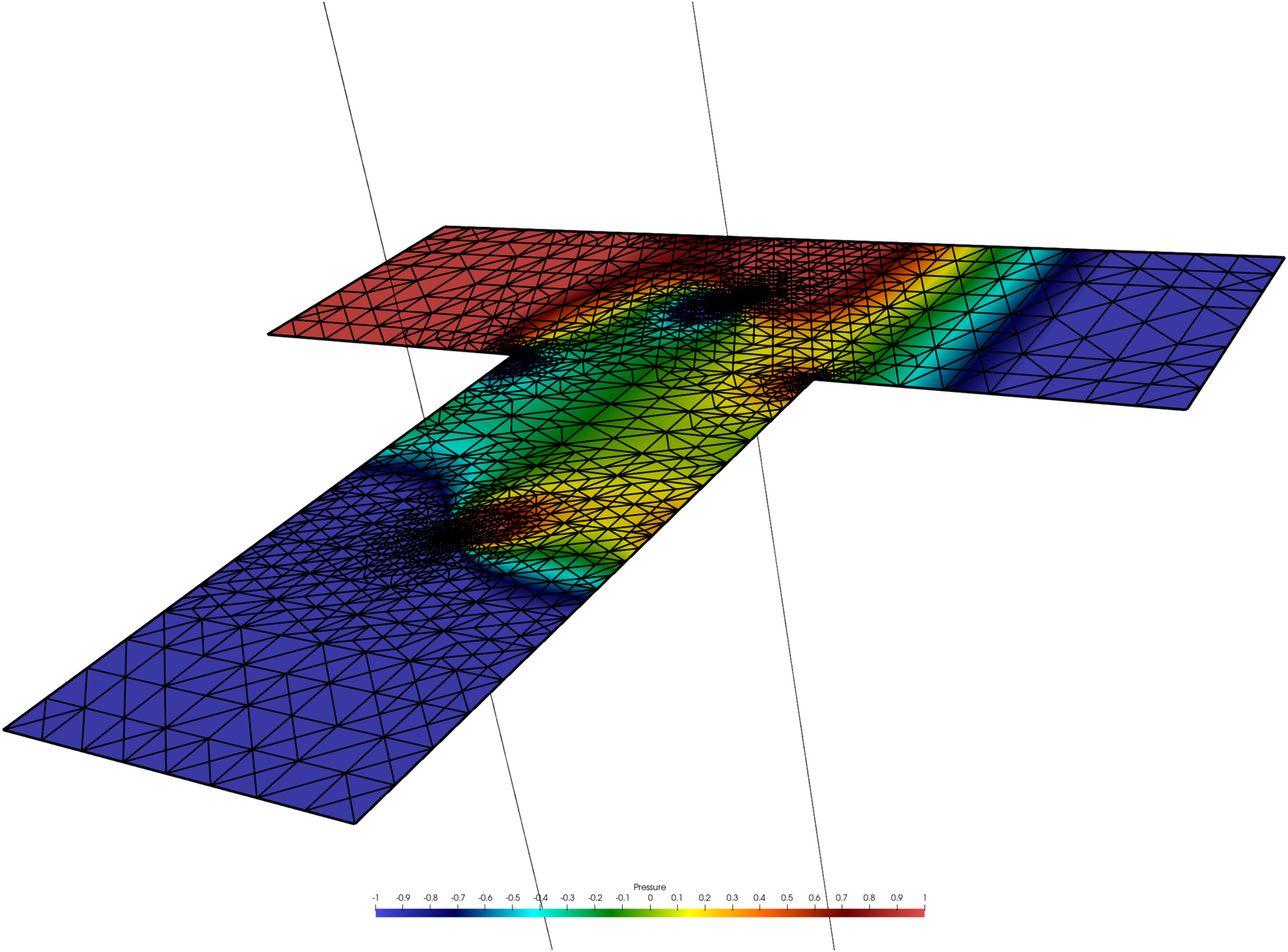} \\
\qquad \tiny{(C.6)}
\end{minipage}
\caption{Example 3: Computational rate of convergence for $\mathcal{D}_{1.0}(\mathbf{u}_{\T},\mathsf{p}_{\T};\T)$ (C.1); the mesh obtained after 60 adaptive refinements (4378 elements and 2263 vertices) (C.2); streamlines for $|\mathbf{u}_{\T}|$ (C.3); elevation for $|\mathbf{u}_{\T}|$ (C.4); pressure contour (C.5); and elevation for $\mathsf{p}_{\T}$ (C.6).}
\label{fig:test_03}
\end{figure}
\section{Conflict of interest}
The authors have not disclosed any competing interests.

\section{Data availability}
The datasets generated during and/or analyzed during the current study are available from the corresponding author on reasonable request.
\bibliographystyle{siamplain}
\bibliography{biblio}

\begin{thebibliography}{10}

\bibitem{MR3618122}
{\sc G.~Acosta and R.~G. Dur\'{a}n}, {\em Divergence operator and related
  inequalities}, SpringerBriefs in Mathematics, Springer, New York, 2017,
  \url{http://dx.doi.org/10.1007/978-1-4939-6985-2}.

\bibitem{MR3264365}
{\sc J.~P. Agnelli, E.~M. Garau, and P.~Morin}, {\em {\it {A} posteriori} error
  estimates for elliptic problems with {D}irac measure terms in weighted
  spaces}, ESAIM Math. Model. Numer. Anal., 48 (2014), pp.~1557--1581,
  \url{http://dx.doi.org/10.1051/m2an/2014010}.

\bibitem{MR3215609}
{\sc H.~Aimar, M.~Carena, R.~Dur\'{a}n, and M.~Toschi}, {\em Powers of
  distances to lower dimensional sets as {M}uckenhoupt weights}, Acta Math.
  Hungar., 143 (2014), pp.~119--137,
  \url{http://dx.doi.org/10.1007/s10474-014-0389-1}.

\bibitem{ACFO:24}
{\sc A.~Allendes, G.~Campa\~{n}a, F.~Fuica, and E.~Ot\'{a}rola}, {\em Darcy’s
  problem coupled with the heat equation under singular forcing: analysis and
  discretization}, IMA J. Numer. Anal.,  (2024).
\newblock https://doi.org/10.1093/imanum/drad094.

\bibitem{MR4659334}
{\sc A.~Allendes, G.~Campa\~{n}a, and E.~Ot\'{a}rola}, {\em Numerical
  discretization of a {D}arcy-{F}orchheimer problem coupled with a singular
  heat equation}, SIAM J. Sci. Comput., 45 (2023), pp.~A2755--A2780,
  \url{http://dx.doi.org/10.1137/22M1536340}.

\bibitem{MR3892359}
{\sc A.~Allendes, E.~Ot\'{a}rola, and A.~J. Salgado}, {\em A posteriori error
  estimates for the {S}tokes problem with singular sources}, Comput. Methods
  Appl. Mech. Engrg., 345 (2019), pp.~1007--1032,
  \url{http://dx.doi.org/10.1016/j.cma.2018.11.004}.

\bibitem{MR4117306}
{\sc A.~Allendes, E.~Ot\'{a}rola, and A.~J. Salgado}, {\em A posteriori error
  estimates for the stationary {N}avier-{S}tokes equations with {D}irac
  measures}, SIAM J. Sci. Comput., 42 (2020), pp.~A1860--A1884,
  \url{http://dx.doi.org/10.1137/19M1292436}.

\bibitem{MR4265062}
{\sc A.~Allendes, E.~Ot\'{a}rola, and A.~J. Salgado}, {\em The stationary
  {B}oussinesq problem under singular forcing}, Math. Models Methods Appl.
  Sci., 31 (2021), pp.~789--827,
  \url{http://dx.doi.org/10.1142/S0218202521500196}.

\bibitem{MR4092292}
{\sc J.~A. Almonacid, H.~S. D\'{\i}az, G.~N. Gatica, and A.~M\'{a}rquez}, {\em
  A fully mixed finite element method for the coupling of the {S}tokes and
  {D}arcy-{F}orchheimer problems}, IMA J. Numer. Anal., 40 (2020),
  pp.~1454--1502, \url{http://dx.doi.org/10.1093/imanum/dry099}.

\bibitem{MUMPS1}
{\sc P.~R. Amestoy, I.~S. Duff, and J.-Y. L'Excellent}, {\em Multifrontal
  parallel distributed symmetric and unsymmetric solvers}, Comput. Methods in
  Appl. Mech. Eng., 184 (2000), pp.~501 -- 520,
  \url{http://dx.doi.org/10.1016/S0045-7825(99)00242-X}.

\bibitem{MUMPS2}
{\sc P.~R. Amestoy, I.~S. Duff, J.-Y. L'Excellent, and J.~Koster}, {\em A fully
  asynchronous multifrontal solver using distributed dynamic scheduling}, SIAM
  J. Matrix Anal. Appl., 23 (2001), pp.~15--41 (electronic),
  \url{http://dx.doi.org/10.1137/S0895479899358194}.

\bibitem{Ayachit2015ThePG}
{\sc U.~Ayachit}, {\em {T}he {P}ara{V}iew {G}uide: A {P}arallel {V}isualization
  {A}pplication}, 2015.

\bibitem{MR2373954}
{\sc S.~C. Brenner and L.~R. Scott}, {\em The mathematical theory of finite
  element methods}, vol.~15 of Texts in Applied Mathematics, Springer, New
  York, third~ed., 2008.

\bibitem{MR3582412}
{\sc M.~Bul{\'\i}{\v c}ek, J.~Burczak, and S.~Schwarzacher}, {\em A unified
  theory for some non-{N}ewtonian fluids under singular forcing}, SIAM J. Math.
  Anal., 48 (2016), pp.~4241--4267.

\bibitem{MR3936891}
{\sc E.~Casas and K.~Kunisch}, {\em Optimal control of the two-dimensional
  stationary {N}avier-{S}tokes equations with measure valued controls}, SIAM J.
  Control Optim., 57 (2019), pp.~1328--1354,
  \url{http://dx.doi.org/10.1137/18M1185582}.

\bibitem{MR4127956}
{\sc S.~Caucao, M.~Discacciati, G.~N. Gatica, and R.~Oyarz\'{u}a}, {\em A
  conforming mixed finite element method for the
  {N}avier-{S}tokes/{D}arcy-{F}orchheimer coupled problem}, ESAIM Math. Model.
  Numer. Anal., 54 (2020), pp.~1689--1723,
  \url{http://dx.doi.org/10.1051/m2an/2020009}.

\bibitem{MR4633701}
{\sc S.~Caucao and J.~Esparza}, {\em An augmented mixed {FEM} for the
  convective {B}rinkman-{F}orchheimer problem: {\it a priori} and {\it a
  posteriori} error analysis}, J. Comput. Appl. Math., 438 (2024), pp.~Paper
  No. 115517, 27, \url{http://dx.doi.org/10.1016/j.cam.2023.115517}.

\bibitem{MR4658588}
{\sc S.~Caucao, G.~N. Gatica, and L.~F. Gatica}, {\em A {B}anach spaces-based
  mixed finite element method for the stationary convective
  {B}rinkman-{F}orchheimer problem}, Calcolo, 60 (2023), pp.~Paper No. 51, 32,
  \url{http://dx.doi.org/10.1007/s10092-023-00544-2}.

\bibitem{CiarletBook}
{\sc P.~G. Ciarlet}, {\em The finite element method for elliptic problems},
  SIAM, Philadelphia, PA, 2002.

\bibitem{COCQUET2021113008}
{\sc P.-H. Cocquet, M.~Rakotobe, D.~Ramalingom, and A.~Bastide}, {\em Error
  analysis for the finite element approximation of the
  {D}arcy-{B}rinkman-{F}orchheimer model for porous media with mixed boundary
  conditions}, J. Comput. Appl. Math., 381 (2021), pp.~Paper No. 113008, 24,
  \url{http://dx.doi.org/10.1016/j.cam.2020.113008}.

\bibitem{MR2797562}
{\sc D.~V. Cruz-Uribe, J.~M. Martell, and C.~P\'{e}rez}, {\em Weights,
  extrapolation and the theory of {R}ubio de {F}rancia}, vol.~215 of Operator
  Theory: Advances and Applications, Birkh\"{a}user/Springer Basel AG, Basel,
  2011, \url{http://dx.doi.org/10.1007/978-3-0348-0072-3}.

\bibitem{MR1800316}
{\sc J.~Duoandikoetxea}, {\em Fourier analysis}, vol.~29 of Graduate Studies in
  Mathematics, American Mathematical Society, Providence, RI, 2001,
  \url{http://dx.doi.org/10.1090/gsm/029}.
\newblock Translated and revised from the 1995 Spanish original by David
  Cruz-Uribe.

\bibitem{MR2731700}
{\sc R.~G. Dur\'{a}n and F.~L\'{o}pez~Garc\'{\i}a}, {\em Solutions of the
  divergence and {K}orn inequalities on domains with an external cusp}, Ann.
  Acad. Sci. Fenn. Math., 35 (2010), pp.~421--438,
  \url{http://dx.doi.org/10.5186/aasfm.2010.3527}.

\bibitem{MR4081912}
{\sc R.~G. Dur\'{a}n, E.~Ot\'{a}rola, and A.~J. Salgado}, {\em Stability of the
  {S}tokes projection on weighted spaces and applications}, Math. Comp., 89
  (2020), pp.~1581--1603, \url{http://dx.doi.org/10.1090/mcom/3509}.

\bibitem{Guermond-Ern}
{\sc A.~Ern and J.-L. Guermond}, {\em Theory and practice of finite elements},
  vol.~159 of Applied Mathematical Sciences, Springer-Verlag, New York, 2004.

\bibitem{MR643158}
{\sc E.~B. Fabes, C.~E. Kenig, and R.~P. Serapioni}, {\em The local regularity
  of solutions of degenerate elliptic equations}, Comm. Partial Differential
  Equations, 7 (1982), pp.~77--116,
  \url{http://dx.doi.org/10.1080/03605308208820218}.

\bibitem{MR4049400}
{\sc F.~A. Fairag and J.~D. Audu}, {\em Two-level {G}alerkin mixed finite
  element method for {D}arcy-{F}orchheimer model in porous media}, SIAM J.
  Numer. Anal., 58 (2020), pp.~234--253,
  \url{http://dx.doi.org/10.1137/17M1158161}.

\bibitem{MR1601373}
{\sc R.~Farwig and H.~Sohr}, {\em Weighted {$L^q$}-theory for the {S}tokes
  resolvent in exterior domains}, J. Math. Soc. Japan, 49 (1997), pp.~251--288,
  \url{http://dx.doi.org/10.2969/jmsj/04920251}.

\bibitem{forchheimer1901wasserbewegung}
{\sc P.~Forchheimer}, {\em Wasserbewegung durch boden}, Z. Ver. Deutsch, Ing.,
  45 (1901), pp.~1782--1788.

\bibitem{MR4548586}
{\sc F.~Fuica, F.~Lepe, E.~Ot\'{a}rola, and D.~Quero}, {\em An optimal control
  problem for the {N}avier-{S}tokes equations with point sources}, J. Optim.
  Theory Appl., 196 (2023), pp.~590--616,
  \url{http://dx.doi.org/10.1007/s10957-022-02148-2}.

\bibitem{MR4304887}
{\sc F.~Fuica, E.~Ot\'{a}rola, and D.~Quero}, {\em Error estimates for optimal
  control problems involving the {S}tokes system and {D}irac measures}, Appl.
  Math. Optim., 84 (2021), pp.~1717--1750,
  \url{http://dx.doi.org/10.1007/s00245-020-09693-0}.

\bibitem{MR1814364}
{\sc D.~Gilbarg and N.~S. Trudinger}, {\em Elliptic partial differential
  equations of second order}, Classics in Mathematics, Springer-Verlag, Berlin,
  2001.
\newblock Reprint of the 1998 edition.

\bibitem{MR2425154}
{\sc V.~Girault and M.~F. Wheeler}, {\em Numerical discretization of a
  {D}arcy-{F}orchheimer model}, Numer. Math., 110 (2008), pp.~161--198,
  \url{http://dx.doi.org/10.1007/s00211-008-0157-7}.

\bibitem{MR2491902}
{\sc V.~Gol'dshtein and A.~Ukhlov}, {\em Weighted {S}obolev spaces and
  embedding theorems}, Trans. Amer. Math. Soc., 361 (2009), pp.~3829--3850,
  \url{http://dx.doi.org/10.1090/S0002-9947-09-04615-7}.

\bibitem{guo2005lattice}
{\sc Z.~Guo and T.~Zhao}, {\em A lattice boltzmann model for convection heat
  transfer in porous media}, Numerical Heat Transfer, Part B, 47 (2005),
  pp.~157--177.

\bibitem{MR2797702}
{\sc D.~D. Haroske and L.~Skrzypczak}, {\em Entropy and approximation numbers
  of embeddings of function spaces with {M}uckenhoupt weights, {II}. {G}eneral
  weights}, Ann. Acad. Sci. Fenn. Math., 36 (2011), pp.~111--138,
  \url{http://dx.doi.org/10.5186/aasfm.2011.3607}.

\bibitem{MR2305115}
{\sc J.~Heinonen, T.~Kilpel\"{a}inen, and O.~Martio}, {\em Nonlinear potential
  theory of degenerate elliptic equations}, Dover Publications, Inc., Mineola,
  NY, 2006.
\newblock Unabridged republication of the 1993 original.

\bibitem{lastone}
{\sc D.~D. Joseph, D.~A. Nield, and G.~Papanicolaou}, {\em Nonlinear equation
  governing flow in a saturated porous medium}, Water Resources Research, 18
  (1982), pp.~1049--1052,
  \url{http://dx.doi.org/https://doi.org/10.1029/WR018i004p01049}.

\bibitem{MR1469972}
{\sc V.~A. Kozlov, V.~G. Maz'ya, and J.~Rossmann}, {\em Elliptic boundary value
  problems in domains with point singularities}, vol.~52 of Mathematical
  Surveys and Monographs, American Mathematical Society, Providence, RI, 1997,
  \url{http://dx.doi.org/10.1090/surv/052}.

\bibitem{Lacouture2015187}
{\sc L.~Lacouture}, {\em A numerical method to solve the stokes problem with a
  punctual force in source term}, Comptes Rendus M\'ecanique, 343 (2015),
  pp.~187 -- 191, \url{http://dx.doi.org/10.1016/j.crme.2014.09.008}.

\bibitem{MR3967591}
{\sc D.~Liu and K.~Li}, {\em Mixed finite element for two-dimensional
  incompressible convective {B}rinkman-{F}orchheimer equations}, Appl. Math.
  Mech. (English Ed.), 40 (2019), pp.~889--910,
  \url{http://dx.doi.org/10.1007/s10483-019-2487-9}.

\bibitem{MR293384}
{\sc B.~Muckenhoupt}, {\em Weighted norm inequalities for the {H}ardy maximal
  function}, Trans. Amer. Math. Soc., 165 (1972), pp.~207--226,
  \url{http://dx.doi.org/10.2307/1995882}.

\bibitem{MR3439216}
{\sc R.~H. Nochetto, E.~Ot\'{a}rola, and A.~J. Salgado}, {\em Piecewise
  polynomial interpolation in {M}uckenhoupt weighted {S}obolev spaces and
  applications}, Numer. Math., 132 (2016), pp.~85--130,
  \url{http://dx.doi.org/10.1007/s00211-015-0709-6}.

\bibitem{MR3906341}
{\sc E.~Ot\'{a}rola and A.~J. Salgado}, {\em The {P}oisson and {S}tokes
  problems on weighted spaces in {L}ipschitz domains and under singular
  forcing}, J. Math. Anal. Appl., 471 (2019), pp.~599--612,
  \url{http://dx.doi.org/10.1016/j.jmaa.2018.10.094}.

\bibitem{MR3998864}
{\sc E.~Ot\'{a}rola and A.~J. Salgado}, {\em A weighted setting for the
  stationary {N}avier {S}tokes equations under singular forcing}, Appl. Math.
  Lett., 99 (2020), pp.~105933, 7,
  \url{http://dx.doi.org/10.1016/j.aml.2019.06.004}.

\bibitem{MR4408483}
{\sc E.~Ot\'{a}rola and A.~J. Salgado}, {\em On the analysis and approximation
  of some models of fluids over weighted spaces on convex polyhedra}, Numer.
  Math., 151 (2022), pp.~185--218,
  \url{http://dx.doi.org/10.1007/s00211-022-01272-5}.

\bibitem{MR2948707}
{\sc H.~Pan and H.~Rui}, {\em Mixed element method for two-dimensional
  {D}arcy-{F}orchheimer model}, J. Sci. Comput., 52 (2012), pp.~563--587,
  \url{http://dx.doi.org/10.1007/s10915-011-9558-3}.

\bibitem{MR3022234}
{\sc H.~Rui and H.~Pan}, {\em A block-centered finite difference method for the
  {D}arcy-{F}orchheimer model}, SIAM J. Numer. Anal., 50 (2012),
  pp.~2612--2631, \url{http://dx.doi.org/10.1137/110858239}.

\bibitem{MR2413675}
{\sc A.~Russo and G.~Starita}, {\em On the existence of steady-state solutions
  to the {N}avier-{S}tokes system for large fluxes}, Ann. Sc. Norm. Super. Pisa
  Cl. Sci. (5), 7 (2008), pp.~171--180.

\bibitem{MR4320857}
{\sc T.~Sayah}, {\em A posteriori error estimates for the
  {B}rinkman-{D}arcy-{F}orchheimer problem}, Comput. Appl. Math., 40 (2021),
  pp.~Paper No. 256, 38, \url{http://dx.doi.org/10.1007/s40314-021-01647-8}.

\bibitem{MR2548872}
{\sc K.~Schumacher}, {\em Solutions to the equation {${\rm div}\,u=f$} in
  weighted {S}obolev spaces}, in Parabolic and {N}avier-{S}tokes equations.
  {P}art 2, vol.~81 of Banach Center Publ., Polish Acad. Sci. Inst. Math.,
  Warsaw, 2008, pp.~433--440, \url{http://dx.doi.org/10.4064/bc81-0-26}.

\bibitem{MR2272870}
{\sc K.~Schumacher}, {\em The stationary {N}avier-{S}tokes equations in
  weighted {B}essel-potential spaces}, J. Math. Soc. Japan, 61 (2009),
  pp.~1--38.

\bibitem{SHENOY1994101}
{\sc A.~Shenoy}, {\em Non-newtonian fluid heat transfer in porous media},
  vol.~24 of Advances in Heat Transfer, Elsevier, 1994, pp.~101--190,
  \url{http://dx.doi.org/10.1016/S0065-2717(08)70233-8}.

\bibitem{MR869816}
{\sc A.~Torchinsky}, {\em Real-variable methods in harmonic analysis}, vol.~123
  of Pure and Applied Mathematics, Academic Press, Inc., Orlando, FL, 1986.

\bibitem{MR1774162}
{\sc B.~O. Turesson}, {\em Nonlinear potential theory and weighted {S}obolev
  spaces}, vol.~1736 of Lecture Notes in Mathematics, Springer-Verlag, Berlin,
  2000.

\bibitem{VAFAI199511}
{\sc K.~Vafai and S.~Kim}, {\em On the limitations of the
  brinkman-forchheimer-extended darcy equation}, International Journal of Heat
  and Fluid Flow, 16 (1995), pp.~11--15,
  \url{http://dx.doi.org/https://doi.org/10.1016/0142-727X(94)00002-T}.

\bibitem{VAFAI1981195}
{\sc K.~Vafai and C.~Tien}, {\em Boundary and inertia effects on flow and heat
  transfer in porous media}, International Journal of Heat and Mass Transfer,
  24 (1981), pp.~195--203,
  \url{http://dx.doi.org/https://doi.org/10.1016/0017-9310(81)90027-2}.

\bibitem{MR3636305}
{\sc C.~Varsakelis and M.~V. Papalexandris}, {\em On the well-posedness of the
  {D}arcy-{B}rinkman-{F}orchheimer equations for coupled porous media-clear
  fluid flow}, Nonlinearity, 30 (2017), pp.~1449--1464,
  \url{http://dx.doi.org/10.1088/1361-6544/aa5ecf}.

\bibitem{MR3000454}
{\sc C.~Zhao and Y.~You}, {\em Approximation of the incompressible convective
  {B}rinkman-{F}orchheimer equations}, J. Evol. Equ., 12 (2012), pp.~767--788,
  \url{http://dx.doi.org/10.1007/s00028-012-0153-3}.

\end{thebibliography}

\end{document}